\newcommand\la{\lambda}
\newcommand\Lam{\Lambda}
\newcommand\al{\alpha}
\newcommand\fie{\varphi}
\newcommand\eps{\epsilon}
\DeclareMathOperator{\Fix}{Fix}
\DeclareMathOperator{\Gr}{Gr}
\DeclareMathOperator{\Cl}{Cl}
\newcommand{\CC}{\ensuremath{\mathbb{C}}}
\newcommand{\RR}{\ensuremath{\mathbb{R}}}
\newcommand{\ZZ}{\ensuremath{\mathbb{Z}}}
\newcommand{\QQ}{\ensuremath{\mathbb{Q}}}
\newcommand{\NN}{\ensuremath{\mathbb{N}}}
\newcommand{\ci}{\ensuremath{\iota}}
\newcommand{\cg}[1]{\ensuremath{\ZZ_#1}} 
\newcommand{\cgb}[1]{\ensuremath{\ZZ_#1}} 
\newcommand{\cond}[2]{\ensuremath{(\ast_{#2})}}
\def\eea{\end{eqnarray*}}
\def\bea{\begin{eqnarray*}}
	\DeclareMathOperator{\Aut}{Aut}
	\DeclareMathOperator{\End}{End}
	\newcommand\dual{\mathrel{\raise3pt\hbox{$\underline{\mathrm{\thinspace d
						\thinspace}}$}}}
	\newcommand\qe{\ifhmode\unskip\nobreak\fi\quad $\Box$}       
	\def\BOX{\hfill\lower.5\baselineskip\hbox{$\Box$}}
	\newtheorem{theorem}{Theorem}
	\newtheorem{remark}[theorem]{Remark}
	\newenvironment{rem}{\begin{remark}\rm}{\end{remark}}
	\newtheorem{question}[theorem]{Question}
	\newtheorem{prop}[theorem]{Proposition}
	\newtheorem{cor}[theorem]{Corollary}
	\newtheorem{lemma}[theorem]{Lemma}
	\newtheorem{example}[theorem]{Example}
	\numberwithin{theorem}{section}
	\theoremstyle{definition}
	\newtheorem{defin}[theorem]{Definition}
	\newcommand{\torus}{\ensuremath{T}}
	\DeclareMathOperator{\GL}{GL}
	\DeclareMathOperator{\Bihol}{Bihol}
	\newcommand{\transl}{\ensuremath{T_r}}
	\newcommand{\AltForm}{\ensuremath{E}}
	\DeclareMathOperator{\id}{id}
	\def\tagform@#1{\maketag@@@{\ignorespaces#1\unskip\@@italiccorr}}
	\newcolumntype{H}{@{}>{\lrbox0}l<{\endlrbox}} 
\begin{document}
		\title[Classification of Bagnera-de Franchis varieties]{Classification of Bagnera-de Franchis Varieties in Small Dimensions}
		\author{Andreas Demleitner}
		\address {Lehrstuhl Mathematik VIII\\
			Mathematisches Institut der Universit\"at Bayreuth, NW II\\
			Universit\"atsstr. 30\\
			D-95447 Bayreuth \\
			Germany}
		\email{andreas.demleitner@uni-bayreuth.de}

		\begin{abstract}
			A \textit{Bagnera-de Franchis variety} $X = A/G$ is the quotient of an abelian variety $A$ by a free action of a finite cyclic group $G \subset \Bihol(A)$, which does not contain only translations. Constructing explicit polarizations and using a method introduced by F. Catanese, we classify split Bagnera-de Franchis varieties up to complex conjugation in dimensions $\leq 4$. \par \hfill \par 
		\end{abstract}
		
		\maketitle
		
		\tableofcontents

		\setcounter{section}{-1}
		
		\section{Introduction}
		\setcounter{page}{1}  
		This work studies free group actions of finite groups $G$ on abelian varieties $A$ and the corresponding quotients. Here, the group $G$ is a group of affine transformations of $A$, but not a subgroup of the group of translations (else, the quotient would be again an abelian variety). A quotient of an abelian variety by such a group $G$ is called a \textit{generalized hyperelliptic variety}. More generally, one defines a \textit{generalized hyperelliptic manifold} to be the quotient of a complex torus by a group $G$ as above. \\
		The study of these dates back to the beginning of the 20th century, when Bagnera and de Franchis as well as Enriques and Severi published their seminal works \cite{BdF} and \cite{Enr-Sev}, respectively. In the surface case, the classification result of Bagnera and de Franchis shows that there are no non-projective hyperelliptic manifolds. Since then, several authors have studied hyperelliptic manifolds, as well as related areas that contributed a lot to today's understanding of this topic. To name only a few works: \cite{Uchida-Yoshihara}, \cite{Fujiki}, \cite{BGL}, \cite{Catanese-Ciliberto}.
		In 2001, Lange (\cite{Lange}) gave a method to classify BdF-varieties up to dimension $4$, using heavily the tables of linear automorphisms of abelian surfaces and threefolds (loc. cit), although he omitted some calculations in his work. It does not seem that this method can be used for the classification in dimension $> 4$ (because tables of linear automorphisms are - as far as we know - currently only known up to dimension $3$). Instead, Catanese \cite{Fabrizio} introduced a method for the classification based on elementary linear algebra and number theory which will be explained and used in this paper for the classification in higher dimensions. \par \hfill \par
		
		Let us explain how this work is organized. The first chapter mainly recalls some basic facts we will need and establishes some elementary results concerning combinatorics of automorphisms of complex tori. In section \ref{bdf-chapter} we introduce Bagnera-de Franchis varieties as quotients of an abelian variety $A$ by a free action of a finite cyclic group $G$ which is not a subgroup of the group of translations and state a characterization for them: a BdF-variety $X=A/G$ splits as $A = (B_1 \times B_2)/(G \times T_r)$, where $T_r$ is a finite group of translations, such that suitable properties are satisfied (cf. Theorem \ref{charac}). Here, $G$ acts on $B_1$ by translation and linearly on $B_2$. \\
		In Chapter 2 we follow \cite{Fabrizio} and introduce the \textit{Hodge type} of a $G$-Hodge decomposition, an invariant attached to a faithful representation $G \to \GL(\Lam)$, where $\Lam$ is a free abelian group of even rank. \\
		Catanese's method (loc. cit.) for the classification of BdF-varieties will be discussed in Chapter 3. We will assume here that the lattice $\Lam_2$ of $B_2$ is a module over a direct sum of cyclotomic rings (in this case we call $X$ \textit{split}). This yields a decomposition of our abelian variety $B_2$ into $G$-invariant abelian subvarieties $B_{2,k}$, on which $G$ acts with eigenvalues of order $k$. We go on classifying complex tori which admit a linear automorphism acting only with eigenvalues of order $k$ to be able to list all possible decompositions for $B_2$. In Chapter 4 and Chapter 5, we put all pieces together (such that the conditions in the characterization of BdF-varieties are satisfied) and obtain the following classification result:
		
		\begin{theorem}\label{class-result} The following classification results hold.
			\begin{enumerate}
				\item There are no BdF-curves.
				\item Families of split BdF-varieties $X$ of dimension $\leq 4$ are fully classified up to complex conjugation in \textsc{Tables 5-7}.
				\item Families of complex tori of dimension $\leq 5$, which admit a linear automorphism of order $m := |G|$ whose eigenvalues are only primitive $m$-th roots of unity are fully classified up to complex conjugation in sections \ref{surface-case} to \ref{5dim}. 
				\item Each family of complex tori of dimension $\leq 5$ as in iii) contains an abelian variety.
			\end{enumerate}
							Moreover, except possibly for the cases listed in \textsc{Table 7}, every family listed in iii) contains a principally polarized abelian variety.
		\end{theorem}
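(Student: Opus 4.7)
The proof splits into the five assertions, addressed in the order stated.

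Statement (i) is immediate: a BdF-curve would be a quotient $E/G$ of an elliptic curve by a finite cyclic group of affine automorphisms $z \mapsto \zeta z + b$ with $\zeta$ a root of unity. If $\zeta \neq 1$, the map has fixed point $z = b/(1-\zeta) \in E$, contradicting freeness; so $G$ consists only of translations, contrary to the definition of a BdF-variety. For (iii), I would enumerate as follows. If a complex torus $T = V/\Lambda$ of dimension $n \le 5$ admits a linear automorphism of order $m$ acting only with primitive $m$-th roots of unity as eigenvalues, then $\Lambda$ is a $\ZZ[\zeta_m]$-module of rank $2n/\varphi(m)$, which restricts $m$ to finitely many values for each $n$. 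The complex structure on $\Lambda \otimes \RR$ is determined by the Hodge type of Chapter 2, i.e.\ by a choice of how many primitive $m$-th roots of unity appear in the holomorphic tangent space. Working case by case as $m$ varies over the admissible values yields the lists of sections \ref{surface-case}--\ref{5dim}.

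Statements (iv) and (v) are handled by constructing explicit polarizations. For (iv), I would begin with an arbitrary positive definite $G$-invariant Hermitian form $H$ on $V$ (which exists by averaging an arbitrary positive definite form) and then perturb within the finite-dimensional real space of $G$-invariant Hermitian forms until $\im H$ becomes $\QQ$-valued on $\Lambda$; clearing denominators then gives a polarization, so the family contains an abelian variety. For (v), one must additionally arrange that the elementary divisors of the resulting integral alternating form on $\Lambda$ are all equal to $1$. I would write down this alternating form explicitly in a basis adapted to the $\ZZ[\zeta_m]$-module structure and compute its Pfaffian; the cases in Table 7 are precisely those where no $G$-invariant choice produces Pfaffian $1$.

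Finally, (ii) is obtained by combining (iii) with the splitting characterization of Theorem \ref{charac}. For each $m$ and each $\dim X \le 4$, I would enumerate the $G$-invariant decompositions $B_2 = \bigoplus_k B_{2,k}$ with $B_{2,k}$ from (iii), pair them with admissible abelian varieties $B_1$ on which $G$ acts by translation, and with finite translation subgroups $T_r$, then verify the compatibility conditions of Theorem \ref{charac} (in particular that the diagonal action on $B_1 \times B_2$ is free). The resulting list fills Tables 5--7. The main obstacle is (v): the averaging construction only produces polarizations up to isogeny, and the existence of a \emph{principal} one depends on a delicate arithmetic computation in $\ZZ[\zeta_m]$ which is sensitive to both the chosen Hodge type and the particular $\ZZ[\zeta_m]$-lattice; this arithmetic obstruction is what forces the exceptional list in Table 7.
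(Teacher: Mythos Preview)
Your outline for (i)--(iii) and for (ii) matches the paper's strategy. The substantive divergence is in (iv) and in your reading of \textsc{Table~7}.

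\textbf{The perturbation argument for (iv) has a gap.} Averaging gives a $G$-invariant positive definite Hermitian form $H$ on $V$, and $\operatorname{Im} H$ is then a $G$-invariant alternating real form satisfying the first Riemann relation $E(V,V)=0$. But ``perturbing within the space of $G$-invariant Hermitian forms'' keeps you inside the real subspace $W_I(V)=\{E\in\mathrm{Alt}^G(\Lambda\otimes\RR):E(V,V)=0\}$, and the question is whether rational forms are dense there. In the rigid case (assumption~\ref{assump}) this is automatic: $G$-invariance forces $E(V_\chi,V_\psi)=0$ unless $\psi=\overline\chi$, and since $V$ contains at most one of each conjugate pair, $W_I(V)$ is all of $\mathrm{Alt}^G(\Lambda\otimes\RR)$, which is defined over $\QQ$. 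In the non-rigid case, however, $V$ contains both $V_\chi$ and $V_{\overline\chi}$ for some $\chi$, and the condition $E(V_\chi,V_{\overline\chi})=0$ cuts out a subspace that depends on the generic complex parameters defining $V_\chi,V_{\overline\chi}$; for generic $V$ in the family this subspace is not defined over $\QQ$ and contains no nonzero rational point. Your argument therefore proves nothing for a generic non-rigid member, and you never vary $V$. The paper proceeds differently: it writes down explicit $G$-invariant integral alternating forms on $R'=\ZZ[X]/(X^{m-1}+\dots+1)$ (resp.\ the even analogue), parametrized by integers $\lambda_1,\dots,\lambda_{\lfloor m/2\rfloor}$, checks that Riemann~I holds for all such forms, and verifies Riemann~II for the choice $\lambda_1=1$, $\lambda_j=0$ ($j\ge 2$) and the specific complex structure of Remark~\ref{cplstructure}~i). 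Non-rigid families are then handled by observing that they specialize to products of these rigid abelian varieties (and, independently, by Ekedahl's Theorem~\ref{ekedahl}).

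\textbf{Table~7 is not what you describe.} It is not a list of families where no $G$-invariant form has Pfaffian~$1$; the paper makes no such non-existence claim. Rather, the form $\lambda_1=1$, $\lambda_j=0$ has determinant~$1$ but is positive definite only for the standard complex structure of Remark~\ref{cplstructure}~i). Table~7 lists the remaining (non-standard) rigid complex structures together with ad~hoc values $(\lambda_1,\dots,\lambda_{\lfloor m/2\rfloor})$ for which positivity was verified computationally; principality of \emph{those} forms is simply not asserted, whence the ``except possibly'' in the theorem.
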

		
		The one-dimensional case i) is an easy consequence of the Riemann-Hurwitz formula, while the classification result for two-dimensional BdF-varieties is exactly the classification result of Bagnera-de Franchis, Enriques-Severi (\cite{BdF}, \cite{Enr-Sev}). The threefold case was treated by Lange (\cite{Lange}). However, the result ii) in $\dim(X) = 4$ is new, as well as iii) and iv) are (as far as we know). \\
			The problem we face during the classification of BdF-varieties is that we do not know whether the classified families of complex tori in iii) really contain abelian varieties. This question will be dealt with in the last chapter: we find explicit polarizations for these, which turn out to be principal in most cases. We also investigate the problem of projectivity from another point of view, explaining how the following result by T. Ekedahl (for a detailed proof, see \cite{Demleitner}) applies to our situation.
		
		\begin{theorem} \label{Ekedahl}
			Let $(\torus,G)$ be a rigid group action of a finite group $G$ on a complex torus $\torus$. Then $\torus/G$ is projective (or equivalently, $T = A$ is an abelian variety).
		\end{theorem}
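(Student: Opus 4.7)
Write $\torus = V/\Lam$ with $V = T_0 \torus$ the tangent space at the origin, and let $\rho : G \to \GL(V)$ denote the induced linear action. Since the holomorphic tangent sheaf is trivial ($\sT_\torus \cong V \otimes_\CC \Oh_\torus$), one has
\[
H^1(\torus, \sT_\torus) \cong H^{0,1}(\torus) \otimes_\CC V \cong \overline{V}^{*} \otimes_\CC V.
\]
Taking $G$-invariants, the infinitesimal deformations of the pair $(\torus, G)$ are governed by $\Hom_G(\overline{V}, V)$, so rigidity translates into the vanishing $\Hom_G(\overline{V}, V) = 0$, i.e.\ $V$ and $\overline{V}$ share no common complex irreducible of $G$.

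\textbf{Vanishing of $G$-invariant $(2,0)$-classes.} Averaging any positive definite Hermitian inner product on $V$ over $G$ produces a $G$-invariant one, giving an isomorphism $V^{*} \cong \overline{V}$ of $G$-representations. Substituting yields
\[
(V^{*} \otimes_\CC V^{*})^{G} \cong \Hom_G(V, V^{*}) \cong \Hom_G(V, \overline{V}) = 0,
\]
the last equality being equivalent to rigidity (common irreducibles between $V$ and $\overline V$ are detected symmetrically by either $\Hom$ space). A fortiori $(\wedge^2 V^{*})^{G} = H^{2,0}(\torus)^{G} = 0$, and by complex conjugation $H^{0,2}(\torus)^{G} = 0$ as well.

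\textbf{Producing a polarization.} Consequently the Hodge decomposition of $H^2(\torus, \CC)^{G}$ collapses to its $(1,1)$-piece, i.e.\ $H^{2}(\torus, \RR)^{G} \subset H^{1,1}(\torus, \RR)$. Since $G$ is finite, $G$-invariants commute with scalar extension, so $H^2(\torus, \QQ)^{G}$ is a $\QQ$-form of $H^2(\torus, \RR)^{G}$. On the other hand, the set of $G$-invariant positive definite Hermitian forms on $V$ is an open cone in $H^{1,1}(\torus, \RR)^{G}$, nonempty by another averaging. Density of $\QQ$-points in this open real cone furnishes a rational $G$-invariant $(1,1)$-class satisfying the Riemann relations, hence a $G$-invariant polarization on $\torus$. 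Therefore $\torus = A$ is abelian, and $\torus/G$ inherits projectivity as a finite quotient.

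\textbf{Main obstacle.} The conceptual heart is the identification of the infinitesimal obstruction space $H^1(\torus, \sT_\torus)^{G}$ with the Hodge-theoretic obstruction $H^{2,0}(\torus)^{G}$ to rational $(1,1)$-classes, via the duality $V^{*} \cong \overline{V}$ provided by averaging. Once this link is established, the density argument is automatic. The subtle point that must be handled carefully is verifying that the deformation functor of pairs $(\torus, G)$ has tangent space exactly $H^1(\torus, \sT_\torus)^{G}$, which is standard for actions of finite groups in characteristic zero but should be made explicit in \cite{Demleitner}.
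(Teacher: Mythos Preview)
The paper does not actually prove this theorem: both in the introduction and in Section~5.1 it merely states Ekedahl's result and refers the reader to the preprint \cite{Demleitner} for a detailed proof. The only trace of the argument in the present paper is the remark following Theorem~\ref{ekedahl} that rigidity of $(\torus,G)$ amounts to each character of $G$ appearing in at most one of $V^{1,0}$ and $V^{0,1}$, which coincides with your reformulation $\Hom_G(\overline V,V)=0$.

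Your argument is correct and is the natural one. The chain
\[
H^{2,0}(\torus)^G=(\wedge^2 V^*)^G\subset (V^*\otimes V^*)^G\cong \Hom_G(V,\overline V)=0
\]
(using $V^*\cong\overline V$ for finite $G$) forces $H^2(\torus,\RR)^G=H^{1,1}(\torus,\RR)^G$, and then the non-empty open $G$-invariant K\"ahler cone meets the dense rational lattice $H^2(\torus,\QQ)^G$, producing a polarization. This is precisely the standard proof attributed to Ekedahl, and there is every reason to expect it is the one written out in \cite{Demleitner}. Since the present paper contains no proof to compare against, there is nothing further to contrast; your sketch stands on its own and correctly identifies the one point (that $H^1(\torus,\sT_\torus)^G$ governs equivariant deformations) which requires a word of justification.
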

		
		\textbf{Notation}: We fix the following notation throughout the whole work. We will work over the field $\CC$ of complex numbers. By an \textit{abelian variety}, we will therefore mean a complex abelian variety. The notion of a \textit{ring} will always mean a commutative ring with unit element. The set of natural numbers $\NN$ will denote the set of all non-negative integers. The dual space of a vector space $V$ is denoted $V^\vee$. \par \hfill \par
		
		\thanks{\textbf{Acknowledgements:} The author cordially thanks Prof. Fabrizio Catanese for a lot of useful advice regarding this topic. This work was generously supported by the ERC Advanced Grant,  n. 340258, 'TADMICAMT'.}
		
		\section{Preliminaries}
		
		In this section, we recall some basic facts which we will need in the sequel. Let $\torus = V/\Lam$ be a complex torus. It is well-known (cf. for instance \cite{Cpl-Ab-Var} ) that $\torus$ is an abelian variety if and only if there is an alternating $\ZZ$-bilinear form $\AltForm$ on $\Lam$ such that the associated $\RR$-bilinear form $H \colon V \times V \to \RR$ given by $H(v,w) = \AltForm(\ci v,w)+ \ci \AltForm(v,w)$ is Hermitian and positive definite. These conditions are publicly known as the two \textit{Riemann Bilinear Relations}. The Riemann Bilinear Relations can also be expressed in the following way. The form $\AltForm$ extends $\CC$-linearly to a form $\AltForm$ on $\Lam \otimes_\ZZ \CC = V \oplus \overline{V}$. We have
		\begin{align*}
		\AltForm \in (V^\vee \otimes V^\vee) \oplus (V^\vee \otimes \overline{V}^\vee) \oplus (\overline{V}^\vee \otimes V^\vee) \oplus (\overline{V}^\vee \otimes \overline{V}^\vee).
		\end{align*}
		Hence, $\AltForm$ splits as a sum $\AltForm = \AltForm_1 + H_1 + H_2 + \AltForm_2$ (where $\AltForm_1$ is in the first direct summand, $H_1$ is in the second one, and so on). Now we have (cf. \cite[p. 327]{Griffiths-Harris}):
		
		\begin{prop} The following statements hold:
			\begin{enumerate}
				\item The first Riemann Bilinear Relation holds if and only if $\AltForm_1 = 0$.
				\item The second Riemann Relation holds if and only if the first Riemann Relation holds and the form $H_1(-\ci z,\overline{w})$ is positive definite.
			\end{enumerate}
		\end{prop}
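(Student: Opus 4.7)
My plan is to treat the two parts in turn by substituting the decomposition $\AltForm = \AltForm_1 + H_1 + H_2 + \AltForm_2$ into each relation.

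For (i), I would first recall that the first Riemann Bilinear Relation is equivalent to $\AltForm(\ci v, \ci w) = \AltForm(v,w)$ for all $v, w$ in the real form $\Lam \otimes_\ZZ \RR$. After $\CC$-linear extension to $V \oplus \overline{V}$, multiplication by $\ci$ acts as $+\ci$ on $V$ and as $-\ci$ on $\overline{V}$. Substituting this into each of the four blocks, bilinearity gives a factor $(\pm\ci)^2 = -1$ on the pure blocks $\AltForm_1$ and $\AltForm_2$, while the mixed blocks $H_1, H_2$ are fixed because $\ci \cdot (-\ci) = 1$. The identity $\AltForm(\ci v, \ci w) = \AltForm(v,w)$ therefore collapses to $\AltForm_1 + \AltForm_2 = 0$. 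Since these two pieces lie in distinct direct summands of $(V \oplus \overline{V})^\vee \otimes (V \oplus \overline{V})^\vee$, each must vanish individually; the reality of $\AltForm$ on the lattice forces $\overline{\AltForm_1} = \AltForm_2$, so the two vanishings are equivalent.

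For (ii), I would assume (i) and expand $H(v,w) = \AltForm(\ci v, w) + \ci\,\AltForm(v,w)$ in the same eigenspace decomposition. The $H_2$-contributions in the two summands carry opposite signs and cancel, while the $H_1$-contributions reinforce, so $H(v,w)$ becomes proportional to $H_1$ applied to the $V$-component of $v$ and the $\overline{V}$-component of $w$. Passing through the canonical identifications of $V$ with the $(+\ci)$-eigenspace and of $\overline{V}$ with the $(-\ci)$-eigenspace, the resulting sesquilinear form on the complex vector space $V$ reads, up to a nonzero real constant, as $(z,w) \mapsto H_1(-\ci z, \overline{w})$. Positive definiteness of $H$ then translates directly to positive definiteness of this form. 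Its Hermitian symmetry is automatic from the reality and alternation of $\AltForm$, which together give $\overline{H_1(z,\overline{w})} = H_2(\overline{z},w) = -H_1(w,\overline{z})$, and hence $\overline{H_1(-\ci z,\overline{w})} = H_1(-\ci w, \overline{z})$.

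The main obstacle will be sign bookkeeping: pinning down which summand is the $+\ci$-eigenspace of the complex structure, tracking how complex conjugation on $\Lam \otimes_\ZZ \CC$ swaps $V \leftrightarrow \overline{V}$ together with $\AltForm_1 \leftrightarrow \AltForm_2$ and $H_1 \leftrightarrow H_2$, and keeping the right factors of $\pm\ci$ in the identification between the real form and the $(1,0)$-part. Once these conventions are fixed, each of (i) and (ii) reduces to a short and direct linear-algebra computation.
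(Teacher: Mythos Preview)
The paper does not actually prove this proposition; it simply records the statement with a citation to \cite[p.~327]{Griffiths-Harris} and moves on. Your proposal, by contrast, sketches the standard direct argument: decompose the $\CC$-linear extension of $\AltForm$ along $\Lam\otimes_\ZZ\CC=V\oplus\overline V$, use that the complex structure acts by $\pm\ci$ on the two summands, and read off each Riemann relation blockwise. This is exactly the computation carried out in Griffiths--Harris, so in substance your approach and the paper's (outsourced) proof coincide.

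Your outline for (i) is correct as written. For (ii) the strategy is also right, and the cancellation/reinforcement you describe does occur; the only delicate point is the one you already flag, namely fixing once and for all whether $V$ is the $+\ci$- or the $-\ci$-eigenspace of the complex structure and tracking the resulting factor of $\pm 1$ in front of $H_1(-\ci z,\overline w)$. With one convention the computation yields $H(v,v)=2\ci\,H_1(v_1,\overline{v_1})=-2\,H_1(-\ci v_1,\overline{v_1})$, which has the wrong sign unless the opposite identification of $V$ is used. So when you write it out in full, commit to the convention that matches the sign in the statement and verify it on a single example; after that the argument goes through cleanly.
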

		
		\subsection{Combinatorics of Automorphisms of Complex Tori}
		In this section we develop combinatorical and group-theoretical restrictions concerning automorphisms of complex tori. These results will be very useful to determine all possible classes of BdF-varieties (cf. Chapters 3 and 4). \par \hfill \par
		Let $\torus = V/\Lam$ be a complex torus of dimension $n$. Let $\al \in \Aut(\torus)$ and $\rho \colon \Aut(\torus) \to \GL(V)$ be the complex representation. This yields a representation $\rho' \colon \Aut(\torus) \to \GL(\Lam)$, called the rational representation. An easy observation is the following lemma (cf. \cite{Cpl-Ab-Var}).
		
		\begin{lemma} 
			The representations $\rho' \otimes 1$ and $\rho \oplus \overline{\rho}$ are equivalent.
		\end{lemma}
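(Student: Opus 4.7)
The plan is to identify $\Lambda\otimes_{\mathbb Z}\mathbb C$ with $V\otimes_{\mathbb R}\mathbb C$ and then decompose the latter into the $\pm i$-eigenspaces of the complex structure of $V$, which will turn out to be the representations $\rho$ and $\overline\rho$. First, since $\Lambda$ is a lattice in the real vector space underlying $V$, the natural map $\Lambda\otimes_{\mathbb Z}\mathbb R \to V$, $\lambda\otimes r\mapsto r\lambda$, is an $\mathbb R$-linear isomorphism. Extending scalars once more gives a canonical isomorphism of complex vector spaces $\Lambda\otimes_{\mathbb Z}\mathbb C \cong V\otimes_{\mathbb R}\mathbb C$, and this identification is equivariant for the action induced by any $\alpha\in\Aut(T)$, since $\rho'(\alpha)$ is by construction the restriction of the $\mathbb R$-linear extension of $\rho(\alpha)$.

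Next, I would introduce the complex structure $J\colon V\to V$, $J=\ci\cdot$, and extend it $\mathbb C$-linearly to an endomorphism $J_{\mathbb C}$ of $V\otimes_{\mathbb R}\mathbb C$. Since $J^2=-\Id$, this endomorphism is diagonalisable with eigenvalues $\pm\ci$, producing a decomposition
\begin{equation*}
V\otimes_{\mathbb R}\mathbb C \;=\; V^{1,0}\oplus V^{0,1},
\end{equation*}
where $V^{1,0}=\{v\otimes 1 - Jv\otimes \ci\}$ and $V^{0,1}=\{v\otimes 1+Jv\otimes \ci\}$. The maps $v\mapsto \tfrac12(v\otimes 1-Jv\otimes \ci)$ and $v\mapsto \tfrac12(v\otimes 1+Jv\otimes \ci)$ then yield $\mathbb C$-linear isomorphisms $V\xrightarrow{\sim} V^{1,0}$ and $\overline V\xrightarrow{\sim} V^{0,1}$ respectively (the second is antilinear with respect to the original $J$, hence linear for the conjugate structure).

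Finally, I would check that an automorphism $\alpha$ acts compatibly: because $\rho(\alpha)$ is $\mathbb C$-linear, it commutes with $J$, so its $\mathbb C$-linear extension to $V\otimes_{\mathbb R}\mathbb C$ preserves the two eigenspaces of $J_{\mathbb C}$. Under the identifications above, the action on $V^{1,0}$ is precisely $\rho(\alpha)$ while the action on $V^{0,1}$ is $\overline{\rho}(\alpha)$. Combining with the first step, this gives the equivalence $\rho'\otimes 1 \cong \rho\oplus\overline\rho$.

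There is no real obstacle here; the argument is a routine unwinding of definitions. The only point requiring slight care is the choice of signs in defining $V^{1,0}$ and $V^{0,1}$, so that the identification $V^{0,1}\cong\overline V$ (rather than $V^{0,1}\cong V$) is made consistently; once this bookkeeping is fixed, the equivariance is automatic from the $\mathbb C$-linearity of $\rho(\alpha)$.
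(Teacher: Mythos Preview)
Your argument is correct and is exactly the standard proof of this fact. The paper does not supply its own proof at all: it merely states the lemma as an ``easy observation'' and cites Birkenhake--Lange, \emph{Complex Abelian Varieties}, so there is nothing to compare against beyond noting that your write-up is the routine unwinding one finds in that reference.
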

		
		Before we start discussing combinatorial restrictions on automorphisms of complex tori, let us first fix some notation. We define $\Aut(\torus)$ to be the group of all (not necessarily linear) automorphisms of $\torus$ and $\Aut(\torus,0)$ to be the subgroup of all linear automorphisms of $\torus$, and analogously $\End(\torus,0)$. For $\al \in \Aut(\torus,0)$, we define $\Fix(\al)$ as the set consisting of all $a \in \torus$ which are fixed under $\al$. 
		
		The following result can be found, together with its proof, in \cite{BGL}; we give a different, more elementary proof here.
		
		\begin{prop}\label{p-pow} Let $\al \in \Aut(\torus)$ be a linear automorphism of order $m = p^k$ for a prime $p$ and assume that $\Lam$ is a free $\ZZ[\zeta_m]$-module. Suppose furthermore that the eigenvalues of $\rho(\al)$ are all primitive $m$-th roots of unity. Then $\Fix(\al) \cong (\ZZ/p\ZZ)^{2\cdot \dim(\torus) /\fie(m)}$. Here, $\fie$ denotes the Euler totient function.
		\end{prop}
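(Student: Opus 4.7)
The plan is to reduce the computation of $\Fix(\al)$ to a cokernel on the lattice, and then to identify this cokernel using the prime-power cyclotomic arithmetic of $\ZZ[\ze_m]$.

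First I would set up the snake-lemma diagram obtained from the short exact sequence $0 \to \Lam \to V \to \torus \to 0$ and the endomorphism $\al - \id$. Since the eigenvalues of $\rho(\al)$ are all primitive $m$-th roots of unity (so in particular $\neq 1$), the map $\al - \id \colon V \to V$ is a $\CC$-linear isomorphism. The snake lemma then collapses the six-term exact sequence, yielding a natural identification
\[
\Fix(\al) \;=\; \knl\bigl(\al - \id : \torus \to \torus\bigr) \;\cong\; \Lam/(\al - \id)\Lam.
\]
In particular $\Fix(\al)$ is a finite abelian group, and the remaining task is purely lattice-theoretic.

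Next I would identify the $\ZZ[\ze_m]$-module structure on $\Lam$ with the action of $\al$. By the preceding lemma, the rational representation satisfies $\rho' \otimes 1 \cong \rho \oplus \overline{\rho}$, so the eigenvalues of $\al$ on $\Lam \otimes \CC$ are all primitive $m$-th roots of unity; combined with $\ord(\al) = m$, this forces the minimal polynomial of $\al$ on $\Lam$ to be exactly $\Phi_m$. Consequently the $\ZZ[\ze_m] = \ZZ[x]/\Phi_m(x)$-module structure coming from the hypothesis is precisely the one in which $\ze_m$ acts as $\al$. Under a choice of $\ZZ[\ze_m]$-basis, $\Lam \cong \ZZ[\ze_m]^r$ with $r = 2\dim(\torus)/\fie(m)$, and $\al - \id$ is multiplication by $\ze_m - 1$. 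Hence
\[
\Fix(\al) \;\cong\; \bigl(\ZZ[\ze_m]/(\ze_m - 1)\bigr)^{r}.
\]

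Finally I would close with the classical number-theoretic computation, which is the only step requiring the prime-power hypothesis $m = p^k$: the ideal $(1 - \ze_{p^k})$ is the unique prime of $\ZZ[\ze_{p^k}]$ above $p$, totally ramified with ramification index $\fie(p^k)$ and inertia degree $1$, so its residue field is $\FF_p$. Equivalently, setting $x = 1$ in the identity $\Phi_{p^k}(x) = 1 + x^{p^{k-1}} + \dots + x^{(p-1)p^{k-1}}$ gives $p$, so $\ZZ[\ze_{p^k}]/(1-\ze_{p^k}) \cong \ZZ/p\ZZ$. Substituting yields $\Fix(\al) \cong (\ZZ/p\ZZ)^{2\dim(\torus)/\fie(m)}$, as required. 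The only potentially tricky point is the matching of the abstract $\ZZ[\ze_m]$-structure with the dynamical one induced by $\al$; everything else is a formal consequence of the snake lemma and the structure of the prime above $p$ in $\ZZ[\ze_{p^k}]$.
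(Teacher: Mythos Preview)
Your proof is correct. Both you and the paper reduce to understanding $(\zeta_m-1)$ on each free summand of $\Lam$, but you reach this point and finish it by different means. The paper characterises $\Fix(\al)$ directly as $\{w \in \ZZ[\zeta_m]\otimes_\ZZ\QQ : (x-1)w \in \ZZ[\zeta_m]\}$ modulo $\ZZ[\zeta_m]$ and then performs an explicit coefficient computation: writing $w = \sum_i a_i x^i$ and expanding $(x-1)w$ using the relation $x^{\fie(p^k)} = -\sum_{i=0}^{p-2} x^{p^{k-1}i}$, it reads off that integrality forces $p\,a_{\fie(p^k)-1}\in\ZZ$ and that all $a_i$ are congruent modulo $\ZZ$. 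You instead pass to the cokernel $\Lam/(\al-\id)\Lam$ via the snake lemma and then invoke the standard ramification fact $\ZZ[\zeta_{p^k}]/(1-\zeta_{p^k})\cong\FF_p$, equivalently $\Phi_{p^k}(1)=p$. Your route is more conceptual and packages the number theory cleanly; the paper's route is deliberately elementary (as announced in the sentence preceding the proposition) and self-contained, needing nothing from algebraic number theory beyond the explicit shape of $\Phi_{p^k}$.
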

		
		\begin{proof} 
			Denote by $x$ the class of $X$ in $\ZZ[\zeta_m] = \ZZ[X]/(\phi_m(X))$ and note that $\al$ acts as multiplication with $x$. We know that $v \in \Fix(\al)$ iff $v \in \Lam \otimes_\ZZ \QQ$ and $(x-1)\cdot v \in \Lam$. It is well known that the $p^k$-th cyclotomic polynomial is $\phi_p(x^{p^{k-1}}) = \sum_{i=0}^{p-1} x^{p^{k-1}\cdot i}$. Since $\Lam \cong \ZZ[\zeta_m]^{2\cdot \dim(A) /\fie(m)}$, it suffices to show that $\{w \in \ZZ[\zeta_m] \otimes_\ZZ \QQ \, | \, (x-1)\cdot w \in \ZZ[\zeta_m]\} \cong \ZZ/p\ZZ$. \\
			For this, we abbreviate $l = \fie(p^k) = \deg(\phi_{p^k})$ and we write $w = a_0 + a_1x + ... + a_{l-1}x^{l-1}$ with $a_i \in \QQ$ for all $i$. Then a straightforward calculation shows the claim:
			\begin{align*}
			(x-1)w= -\sum_{i=0}^{l-1} a_ix^i + \sum_{i=1}^{l-1} a_{i-1}x^i - \sum_{i=0}^{p-2} a_{l-1} x^{p^{k-1} \cdot i},
			\end{align*}
			where we have used the equality $x^l = -\sum_{i=0}^{p-2}x^{p^{k-1}\cdot i}$. \\
			The condition is now that all the coefficients of powers of $x$ must be integers, so the sum of the coefficients has to be an integer. Adding the coefficients, we find that $-p \cdot a_{l-1}$ has to be an integer. This shows the assertion, as all $a_i$ are congruent to $a_{l-1}$ modulo $\ZZ$.
		\end{proof}
		
		\begin{rem}\label{torsionpoints} Let $\torus \cong_{\text{top.}} (S^1)^{2n}$ be a torus of real dimension $2n$. Then the preceding homeomorphism gives $\torus[m] \cong (\ZZ/m\ZZ)^{2n}$. \end{rem}
		
		\begin{rem} \label{el-of-order-m}
			It follows from standard combinatorics that the number of elements of order exactly $m$ in $(\ZZ/m\ZZ)^{r}$ is precisely
			\begin{align*}
			\sum_{k=0}^{r-1} \fie(m)\cdot(m-\fie(m))^k\cdot m^{r-k-1}.
			\end{align*}
		\end{rem}
		
		\subsection{An Introduction to Bagnera-de Franchis Varieties} \label{bdf-chapter}
		
		We start this section with a few preliminaries on generalized hyperelliptic varieties.
		\begin{defin} \label{hyper}
			A \textit{generalized hyperelliptic variety} (resp. \textit{generalized hyperelliptic manifold}) $X = A/G$ of dimension $n$ is the quotient of an abelian variety (resp. a complex torus) $A$ by a finite group $G$ of affine transformations of $A$, such that $G$ acts freely and is not a subgroup of the group of translations. If $G$ is cyclic, $X$ is called a \textit{Bagnera-de Franchis variety} (resp. Bagnera-de Franchis manifold); for short: BdF-variety (BdF-manifold).\end{defin}
		
		\begin{rem} \label{trans}
			Let $X = A/G$ be a generalized hyperelliptic variety. By virtue of our assumption that, in the above situation, $G$ is not a subgroup of the group of translations, we can assume without loss of generality that $G$ does not contain any translation: \\
			Let $G_T$ be the subgroup of translations in $G$, which is a normal subgroup. Hence, we get an abelian variety $A' = A/G_T$ and a group $G' = G/G_T$ without translations such that $A'/G' \cong A/G$. \par \hfill \par
			Also note that there are no hyperelliptic varieties of dimension $1$ in the above sense, since the Riemann-Hurwitz-formula holds.
		\end{rem}
		
		\begin{rem}\label{eigenvalue} Let $X = A/G$ be a generalized hyperelliptic variety, and $A = V/\Lam$. Write $\id \neq g \in G$ as $g(x) = \al x + b$, where $\al \in \GL(V)$ and $b \in V$. Then the property of $g$ acting freely on $A$ is equivalent to the fact that no pair of $(x,\la) \in V \times \Lam$ solves the equation
			\begin{equation}
			(\al - \id)x = \la - b.
			\end{equation}
			Hence, the freeness of the action of $g \in G$ implies that $\al$ has the eigenvalue $1$. \end{rem}

		\begin{defin}
			A BdF-variety (resp. BdF-manifold) $X = A/G$ is said to be of \textit{product type}, if $A = B_1 \times B_2$ is a product of abelian varieties (resp. complex tori) and $G \cong \ZZ/m\ZZ$ is generated by an automorphism $g$ acting as $g(a_1,a_2) = (a_1 + b', \al'a_2)$, where $b' \in B_1$ is an element of order $m$ and $\al' \in \Aut(B_2)$ is a linear automorphism of order $m$, not admitting the eigenvalue $1$.
		\end{defin}
		
		We have the following characterization of BdF-varieties:
		
		\begin{theorem} \label{charac}
			The following two statements are equivalent:
			\begin{enumerate}
				\item $X = A/G$ is a BdF-variety.
				\item $X$ is the quotient of a BdF-variety $(B_1 \times B_2)/G$ of product type by a finite group $\transl$ of translations such that the following conditions hold.
				\begin{enumerate}
					\item $\transl$ is the graph of an isomorphism $T_{r,1} \to T_{r,2}$, where $T_{r,i}$ is a finite group of translations of $B_i$.
					\item If $g \in G$ is given by $g(a_1,a_2) = (a_1 + b', \al'a_2)$, then $(\al'-\id)T_{r,2} = \{0\}$. \label{2nd}
					\item If $g$ is as above of order $m$, then $\langle b' \rangle \cap T_{r,1} = \{ 0 \}$. \label{3rd}
				\end{enumerate}
			\end{enumerate}
			In particular, we may write $X = (B_1 \times B_2)/(G \times \transl)$.
		\end{theorem}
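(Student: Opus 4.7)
The theorem has two directions. The implication (ii) $\Rightarrow$ (i) is essentially a verification: given the product-type BdF-structure on $B_1 \times B_2$ together with translations $T_r$ satisfying (a)--(c), condition (b) asserts precisely that the linear part of $g$ preserves the graph $T_r$ inside $B_1 \times B_2$, so the $G$-action descends to $A := (B_1 \times B_2)/T_r$. The descended action is cyclic of order $m$ and free: a fixed-point equation $g^k(x) = x$ on $A$, projected to the first factor, forces $k b' \in T_{r,1}$, which by (c) entails $k \equiv 0 \pmod m$. Since $\alpha' \neq \id$, the action is not purely by translations, and $X = A/G$ is a BdF-variety.

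For (i) $\Rightarrow$ (ii), the plan is to recover the product structure from the linear part of a generator $g(x) = \alpha x + b$ of $G$. Since $\alpha$ has finite order $m$ it is diagonalizable, so I decompose $V = V_1 \oplus V_2$ with $V_1 = \ker(\alpha - \id)$ (nontrivial by Remark \ref{eigenvalue}) and $V_2$ the sum of the remaining eigenspaces. Both $V_i$ are defined over $\QQ$ with respect to $\Lambda \otimes \QQ$, so $\Lambda_i := \Lambda \cap V_i$ has maximal rank in $V_i$ and $\Lambda_1 \oplus \Lambda_2$ has finite index in $\Lambda$. As $(\alpha - \id)|_{V_2}$ is invertible, writing $b = b_1 + b_2$ with $b_i \in V_i$ and solving $(\alpha - \id)v = -b_2$ for $v \in V_2$, the translation $x \mapsto x + v$ conjugates $g$ to $x \mapsto \alpha x + b_1$, so I may assume $b = b' \in V_1$.

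Set $B_i := V_i/\Lambda_i$; then $B_1 \times B_2 \to A$ is an isogeny with kernel $T_r \cong \Lambda/(\Lambda_1 \oplus \Lambda_2)$ embedded in $B_1 \times B_2$. Each projection $T_r \to B_i$ is injective (if the class of $\lambda = \lambda_1 + \lambda_2 \in \Lambda$ projects trivially to $B_1$, then $\lambda_1 \in \Lambda_1$, forcing $\lambda_2 \in \Lambda \cap V_2 = \Lambda_2$), giving (a). The action of $g$ lifts to $(x_1, x_2) \mapsto (x_1 + b', \alpha' x_2)$ with $\alpha' := \alpha|_{V_2}$, and freeness on $A$ specialized to $x_2 = 0$ yields $k b' \notin \Lambda_1$ for $0 < k < m$, so $b'$ has order exactly $m$ in $B_1$ and $(B_1 \times B_2)/G$ is of product type. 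Condition (b) follows from $(\alpha' - \id)\lambda_2 = (\alpha - \id)\lambda \in \Lambda \cap V_2 = \Lambda_2$ for every $\lambda = \lambda_1 + \lambda_2 \in \Lambda$. For (c), if $[k b'] \in T_{r,1}$ with $0 < k < m$, I find $\mu \in \Lambda$ whose $V_1$-component equals $k b'$, and solving $(\alpha^k - \id)x_2 = \mu_2$ in $V_2$ then exhibits $x_1 + x_2$ as a fixed point of $g^k$ on $A$, contradicting freeness.

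The main obstacle is this last step: the solvability of $(\alpha^k - \id)x_2 = \mu_2$ on $V_2$ is automatic only when $\gcd(k,m) = 1$, since $\alpha$ may have eigenvalues $\zeta \neq 1$ on $V_2$ with $\zeta^k = 1$. The resolution is to require the equation only modulo $\Lambda_2$: since any alternative representative $\mu' \in \Lambda$ with $\mu'_1 = k b'$ differs from $\mu$ by an element of $\Lambda_2$, the class $[\mu_2] \in B_2$ is well-defined, and one shows using condition (b) (already verified) that $[\mu_2]$ lies in the image of the endomorphism $(\alpha' - \id)$ restricted to the subtorus controlling the $\alpha^k$-fixed eigenspaces, which suffices to produce the desired fixed point and complete the contradiction.
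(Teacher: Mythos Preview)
The paper does not prove this theorem; it simply refers to \cite[Proposition 15]{Fabrizio}. Your attempt is therefore an independent proof, and most of it is sound: the implication (ii) $\Rightarrow$ (i) is correct, and for (i) $\Rightarrow$ (ii) your construction of $V_1 = \ker(\alpha-\id)$, $V_2$, $\Lambda_i = \Lambda\cap V_i$, together with the normalisation $b=b'\in V_1$, is the natural one. Your verifications of (a), of (b), and of the product-type axioms are clean.

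The genuine gap is exactly where you locate it, in condition (c), and your proposed resolution does not close it. What freeness of $g^k$ on $A$ gives you is precisely
\[
\mu_2 \notin (\alpha^k-\id)V_2 + \Lambda_2,
\]
i.e.\ $[\mu_2]\notin \im\bigl((\alpha')^k-\id\colon B_2\to B_2\bigr)$; it does \emph{not} directly forbid the existence of $\mu\in\Lambda$ with $\mu_1=kb'$. Your patch asserts that condition (b) forces $[\mu_2]$ into this image, but (b) only yields $[\mu_2]\in\Fix(\alpha')$, and $\Fix(\alpha')$ is in general \emph{not} contained in the subtorus $\im\bigl((\alpha')^k-\id\bigr)$: for instance, with $m=6$ and $\alpha'$ having eigenvalues $\zeta_6,\zeta_3$, the group $\Fix(\alpha')$ lives in the $\zeta_3$-part while $\im((\alpha')^3-\id)$ is the $\zeta_6$-subtorus. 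The phrase ``the subtorus controlling the $\alpha^k$-fixed eigenspaces'' is too vague to carry the argument. A correct treatment of (c) must exploit more than (b); one route is to show that the existence of such a $\mu$ forces $kb'\in\Lambda_1$ via the $\ZZ[\alpha]$-module structure of $\Lambda$ (tracking the orders of the components of $\mu$ in the cyclotomic pieces of $\Lambda\otimes\QQ$), contradicting $\ord(b')=m$; alternatively, consult Catanese's original argument, which organises the construction so that this issue does not arise.
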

		
		\begin{proof}
			See for instance \cite[Proposition 15]{Fabrizio}. 
		\end{proof}
		
		\section{Actions of Finite Groups on Complex Tori}
		
		We give the most important definitions for the classification of BdF-varieties.
		
		\begin{defin} Let $\Lam$ be a free abelian group of even rank and $G \to \GL(\Lam)$ be a faithful representation of a finite group $G$. A \textit{$G$-Hodge decomposition} is a decomposition
			\begin{align*}
			\Lam \otimes_\ZZ \CC = H^{1,0} \oplus H^{0,1}, \, \, \, \overline{H^{1,0}} = H^{0,1},
			\end{align*}
			into $G$-invariant linear subspaces.
		\end{defin}

		Splitting $\Lam \otimes_\ZZ \CC$ using the canonical decomposition, we can write 
		\begin{align*}
		\Lam \otimes_\ZZ \CC = \bigoplus_\chi U_\chi,
		\end{align*}
		where the sum runs over all characters belonging to irreducible representations of $G$. Thus, $U_\chi = W_\chi \otimes M_\chi$. Here, $W_\chi$ is the corresponding irreducible representation and $M_\chi$ is a trivial representation. Write
		\begin{align*}
		V := H^{1,0} = \bigoplus_\chi V_\chi
		\end{align*}
		with $V_\chi = W_\chi \otimes M_\chi^{1,0}$. We define
		
		\begin{defin} The \textit{Hodge type} of a $G$-Hodge decomposition is the collection of the dimensions $\nu(\chi) = \dim_{\CC} M_\chi^{1,0}$. Here, $\chi$ runs over all non-real characters.
		\end{defin}
		
		\begin{rem}
			\noindent \begin{enumerate}
				\item Note that for a non-real irreducible character $\chi$, one has $\nu(\chi) + \nu(\overline{\chi}) = \dim_\CC M_\chi$.
				\item All $G$-Hodge decompositions of a fixed Hodge type are parametrized in an open set of a product of Grassmanians: for a real irreducible character $\chi$, one simply chooses a $\frac12\dim(M_\chi)$-dimensional subspace of $M_\chi$, and for a non-real irreducible character, one chooses a $\nu(\chi)$-dimensional subspace of $M_\chi$. Then the condition is that $M_\chi^{1,0}$ and $M_\chi^{0,1} = \overline{M_{\overline{\chi}}^{1,0}}$ do not intersect. 
			\end{enumerate}
		\end{rem}
		
		Let $\rho \colon G \to \GL(V)$ be the linear representation which sends $g \in G$ to its linear part. Denote by $G^\vee$ the group of characters of $G$, i.e., the group of group homomorphisms from $G$ to $\CC^*$. For simplicity, we write $\rho_g$ instead of $\rho(g)$. Since $G$ was assumed to be abelian, all the irreducible representations of $G$ have degree $1$. The $\chi$-eigenspace of $G$ is denoted $V_\chi$, i.e., 
		\begin{align*} V_\chi = \{ v \in V \, | \, \rho_g(v) = \chi(g)\cdot v, \, \forall g \in G\}.
		\end{align*}  \label{unity_eigenspaces}
		Thus, we can split $V = \bigoplus_{\chi} V_\chi$. Denote by $M$ the set of all characters such that $V_\chi \neq \{0\}$. We then have $V = \bigoplus_{\chi \in M} V_\chi$. \\
		Of course, we want to apply these considerations to generalized hyperelliptic varieties. In particular, we will have $G$ satisfying the following two properties:
		\begin{itemize}
			\item $G$ acts freely.
			\item $G$ contains no translations, i.e., $\rho$ is faithful.
		\end{itemize}
		
		We have the following elementary result if $G$ is even cyclic:
		
		\begin{lemma} Let $\colon G \to \GL(\Lam)$ be a representation of a finite cyclic group $G = \langle g \rangle$, $\Lam$ a free abelian group of rank $2n$ and $\Lam \otimes_\ZZ \CC = \bigoplus_\chi U_\chi$ be the canonical decomposition. If two characters $\chi$ and $\chi'$ have the same order, the spaces $U_\chi$ and $U_{\chi'}$ have the same dimension.
		\end{lemma}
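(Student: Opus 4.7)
The plan is to exploit the fact that $\rho'\colon G \to \GL(\Lambda)$ is an integral representation: the character of the complexified representation $\rho' \otimes_\ZZ \CC$ takes values in $\ZZ$, hence is invariant under the natural action of $\Gamma := \mathrm{Gal}(\QQ(\zeta_m)/\QQ)$ on the characters of $G$, where $m = |G|$. Since the characters of the cyclic group $G$ sharing a common order $d \mid m$ form a single $\Gamma$-orbit, the multiplicities $\dim_\CC U_\chi$ must then be constant on each such orbit.

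First I would set up the identifications. Fixing a generator $g$ of $G$, identify $G^\vee$ with $\ZZ/m\ZZ$ via $\chi_k \mapsto k$, where $\chi_k(g) = \zeta_m^k$; under this identification one has $\ord(\chi_k) = m/\gcd(k,m)$, so the characters of exact order $d$ are parametrized by $k = (m/d)j$ with $\gcd(j,d) = 1$, a set of cardinality $\fie(d)$. The group $\Gamma$ acts on $G^\vee$ via $\sigma_a \cdot \chi_k = \chi_{ak}$ for $a \in (\ZZ/m\ZZ)^\times$. The key combinatorial fact is that the restriction map $\Gamma \twoheadrightarrow \mathrm{Gal}(\QQ(\zeta_d)/\QQ) \cong (\ZZ/d\ZZ)^\times$ is surjective, so on the parameter $j$ the action reduces to the simply transitive action of $(\ZZ/d\ZZ)^\times$ on itself; in particular, $\Gamma$ acts transitively on the $\fie(d)$ characters of order $d$.

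For the conclusion, I would expand
\begin{align*}
\chi_{\rho' \otimes \CC} \;=\; \sum_{\chi \in G^\vee} (\dim_\CC U_\chi)\, \chi.
\end{align*}
Since $\rho'$ takes values in $\GL(\Lambda)$, the left-hand side is $\ZZ$-valued and hence fixed by every $\sigma \in \Gamma$. Applying $\sigma$ to the right-hand side yields $\sum_\chi (\dim_\CC U_\chi)\, (\sigma \cdot \chi)$, and linear independence of characters forces $\dim_\CC U_{\sigma \cdot \chi} = \dim_\CC U_\chi$ for every $\sigma$ and $\chi$. Combined with Galois transitivity on characters of a fixed order, this yields the claim. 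The only real (and mild) obstacle is the combinatorial verification that the $\Gamma$-orbits in $G^\vee$ are precisely the level sets of the order function; everything else is standard representation theory.
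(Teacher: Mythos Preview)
Your argument is correct, but the paper's proof is more elementary and avoids the character-theoretic detour. The paper simply observes that the characteristic polynomial $f_g$ of the generator acting on $\Lambda \otimes_\ZZ \CC$ has integer coefficients (since the representation is integral), so $f_g$ factors over $\QQ$ as a product of cyclotomic polynomials; hence all primitive $k$-th roots of unity occur as roots of $f_g$ with the same multiplicity, and diagonalizability of $\rho_g^0$ converts root multiplicities into eigenspace dimensions. Your approach packages the same integrality constraint through the $\ZZ$-valuedness of the trace character and the Galois action on $G^\vee$, invoking linear independence of characters to extract the multiplicities. The underlying content is identical---irreducibility of $\phi_k$ over $\QQ$ is precisely the transitivity of $\Gamma$ on primitive $k$-th roots---but the paper's formulation needs only the characteristic polynomial and no explicit Galois bookkeeping, whereas your version has the advantage of generalizing verbatim to arbitrary finite $G$ (with Galois-conjugate irreducible characters in place of characters of the same order).
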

		
		\begin{proof}
			We have $G \to \GL(\Lam) \hookrightarrow \GL(\Lam \otimes_\ZZ \CC)$. Denote by $\rho^0$ the representation $G \to \GL(\Lam \otimes_\ZZ \CC)$.
			Let $\chi$ and $\chi'$ be two characters of the same order $k$. Since $\chi$ and $\chi'$ have order $k$ and the characteristic polynomial $f_g$ of $\rho^0_g$ has integral coefficients, a power of the $k$-th cyclotomic polynomial divides $f_g$. This yields that the multiplicities of $\chi(g)$ and $\chi'(g)$ as zeros of $f_g$ are equal. We are now finished, because $\rho_g^0$ is diagonalizable.
		\end{proof}
		
		\section{Primary BdF-Varieties}
		
		In this section, we want to classify split BdF-varieties in small dimensions. We use Theorem \ref{charac}, where we saw that a BdF-variety is the quotient of a BdF-variety of product type by a finite group of translations such that the conditions in the quoted theorem are satisfied. We follow \cite{Fabrizio} for the presentation of the method used for the classification.
		
		\begin{defin}
			\noindent \begin{enumerate}
				\item Assume that a cyclic group $G$ generated by a linear automorphism $g$ of order $m$ acts on an abelian variety (resp. a complex torus) $A$. Then $(A,G)$ is called \textit{primary}, if a generator $g$ of $G$ has only primitive $m$-th roots of unity as eigenvalues. By abuse of notation, we will call an abelian variety (resp. a complex torus) $A$ primary if the group $G$ is clear from the context.
				\item A BdF-variety (resp. BdF-manifold) $(B_1 \times B_2)/G$ of product type is said to be \textit{primary}, if the abelian variety $B_2$ (resp. the complex torus $B_2$) is primary.
			\end{enumerate}
		\end{defin}
		
		Let $X = (B_1 \times B_2)/G$ a BdF-variety of product type and write as usual $B_i = V_i/\Lam_i$. In this situation, $\Lam_2$ is a $G$-module, thus a module over the ring $\ZZ[G] \cong \ZZ[X]/(X^m-1)$. By the Chinese Remainder theorem, we find that $\ZZ[G]$ embeds  into a direct sum of cyclotomic rings, i.e., we have
		\begin{align*}
		\ZZ[X]/(X^m-1)  \subset \bigoplus_{k|m} \ZZ[X]/(\phi_k(X)),
		\end{align*}
		where $\phi_k(X)$ is the $k$-th cyclotomic polynomial. 

			\begin{rem}
				Fabrizio Catanese (private communication) pointed out that the inclusion above is not an isomorphism. In \cite{Fabrizio}, he inadvertently claimed  that exactly this was the case, while in fact, this is only true over $\QQ$. Throughout this paper we only deal with the case where $\Lam_2$ is as well a module over the direct sum $\bigoplus_{k|m} \ZZ[X]/(\phi_k(X))$. In the general case, one obtains tori which are isogenous to ones with splitting lattices as above (we shall deal with this question with more precision in a future paper).
			\end{rem}

		We write $R := \ZZ[x]/(x^m-1)$ and $R_k$ for the cyclotomic ring $\ZZ[x]/(\phi_k(x))$. We assume that $\Lam_2$ splits according to the order of the eigenvalues, $\Lam_2 = \bigoplus_{k|m} \Lam_{2,k}$ (note that  $\Lam_{2,k}$ is an $R_k$-module). The vector space $V_2$ splits accordingly, $V_2 = \bigoplus_{k|m} V_{2,k}$. We find that
		\begin{align} \label{splitting}
		B_2 = \bigoplus_{k|m} B_{2,k},
		\end{align}
		where $B_{2,k}$ is a $G$-invariant abelian subvariety of $B_2$ such that a generator $g \in G$ (as in Theorem \ref{charac}) acts on $B_{2,k}$ with eigenvalues of order $k$.

		\begin{defin}
		We call the abelian variety $B_2$ \textit{split} if it admits a splitting as in \ref{splitting}. A BdF variety $X = (B_1 \times B_2)/(G \times T)$ is called \textit{split} if $B_2$ is split.
		\end{defin}

		Let $X$ be a BdF-variety of dimension $n>1$. We furthermore assume that $X = (B_1 \times B_2)/G$ is a primary BdF-variety. In this case, $\Lam_2$ is a module over the ring $R_m = \ZZ[x]/(\phi_m(x))$, which is a Dedekind domain (see for example theorem 2.6 in \cite{Washington}). In fact, $\Lam_2$ is a projective $R_m$-module. We therefore get a splitting $\Lam_2 = R_m^r \oplus I$ of $\Lam_2$ into a free part and an ideal $I \subset R_m$ (see \cite{Milnor}). Indeed, $\Lam_2$ is free if $R_m$ is a PID, i.e., if the class number $h(R_m) := \# \Cl(R_m)$ is equal to $1$. We know that $R_m$ is a free $\ZZ$-module of rank $\fie(m)$ (where $\fie$ denotes the Euler totient function), hence we have
		\begin{align} \label{eqrank}
		\Lam_2 = R_m^r \cong \ZZ^{\fie(m)\cdot r}
		\end{align}
		for all $m$ satisfying $h(R_m) = 1$. As $B_1$ is an abelian variety of positive dimension, we have the following important observation:
		\begin{align} \label{ineq}
		\fie(m) \leq 2(n-1).
		\end{align}
		
		\begin{defin} Let $\torus = V/\Lam$ be a complex torus. If $\Lam = R_m^k$, then $\torus$ is called \textit{elementary}.
		\end{defin}
		
		We have the following well-known table of values $\fie(m)$: \\
		\begin{center}
		\begin{tabular}{lr}
		\begin{tabular}{|c|c|} \hline
		$\fie(m)$ & $m$\\ \hline \hline 
		$1$ & $1,2$ \\ \hline
		$2$ & $3,4,6$  \\ \hline
		$4$ & $5,8,10,12$ \\ \hline
		\end{tabular}
		&	
		\begin{tabular}{|c|c|} \hline
		$\fie(m)$ & $m$ \\ \hline \hline
		$6$ & $7,9,14,18$ \\ \hline
		$8$ & $15,16,20,24,30$ \\ \hline
		$10$ & $11,22$ \\ \hline
		\end{tabular}			
		\end{tabular}				
		\end{center}
		
		According to the table on page 353 of \cite{Washington}, we know in particular that $h(R_m) = 1$ for all values of $\fie(m)$ listed above.
		
		In the following sections, we will classify BdF-varieties in small dimensions. To obtain a satisfying classification, one first determines all possibilities for $B_2$. To give a complex structure to $(\Lam_2 \otimes_\ZZ \RR)/\Lam_2$ it is sufficient and necessary to give a decomposition $\Lam_2 \otimes_\ZZ \CC = V \oplus \overline{V}$ (see \cite{Griffiths-Harris}, pages 326 and 327). This amounts to determining all possible $G$-Hodge decompositions corresponding to $\Lam_2$. 
		
		\begin{rem} 
			\noindent \begin{enumerate}
				\item Note that, by the above inequality \ref{ineq}, the case $m=2$ obviously occurs for all dimensions $n>1$. The only automorphism of order $2$ of an abelian variety with finitely many fixed points is multiplication with $-1$, so the case $m=2$ can be easily classified. In what follows, this trivial case will be omitted during the calculations, but will be listed in the tables. (Note that the number of parameters for this case is given by the dimension of the Siegel upper half space.)
				\item In the following sections, we classify families of complex tori admitting a faithfully acting linear automorphism of a certain order up to dimension $5$ \textit{up to complex conjugation}. Usually, we will drop the phrase 'up to complex conjugation' in the statement of our results.
				\item We refrain from listing the well-known table of elliptic curves admitting automorphisms of finite order.
			\end{enumerate}
		\end{rem}

		\subsection{The Surface Case} \label{surface-case}
In this and the upcoming sections, we classify families of abelian varieties admitting a linear automorphism of certain order acting faithfully. To abbreviate, we denote by $\cond_{n,m}$ the following condition to be satisfied by a torus $T$:
\begin{center}
\begin{tabular}{lp{10cm}}
$\cond_{n,m}$ & $\dim(T) = n$, and  $T$ admits a linear automorphism of some order $m > 2$ having only primitive $m$-th roots of unity as eigenvalues
\end{tabular}
\end{center}

	For now, we assume that $\torus$ satisfies $\cond_{2,m}$. We write $\Lam \otimes_\ZZ \CC = V \oplus \overline{V}$. Using inequality \ref{ineq}, we find that we only have two possibilities in equation \ref{eqrank}: \par  \hfill \par
		\noindent \textit{Case 1:} $\fie(m) = 2$ (hence $m \in \{3,4,6\}$) and $r = 2$, or \\
		\textit{Case 2:} $\fie(m) = 4$ (hence $m \in \{5,8,10,12\}$) and $r = 1$. \par  \hfill \par
		
		In the first case, $R_m$ has rank $2$ as a $\ZZ$-module. We have the decomposition $\Lam \otimes_\ZZ \CC = W_\chi \oplus W_{\overline{\chi}}$ into $2$-dimensional isotypical components. Recall that we have a decomposition $\Lam \otimes_\ZZ \CC = V \oplus \overline{V}$. As we have seen in chapter 2, we also get a decomposition for $V$ (i.e., we have $\Lam \otimes_\ZZ \CC = V_\chi \oplus \overline{V_{\overline{\chi}}} \oplus V_{\overline{\chi}} \oplus \overline{V_\chi}$). We will always (i.e., in any dimension) distinguish between the cases where the following assumption holds (or does not hold).
		\begin{equation} \label{assump}
		\text{Only pairwise non-conjugate characters appear in } V.
		\end{equation}
		
The result \cite[Proposition 1.8, a)]{BGL} together with \ref{eqrank} now give the following
		
		\begin{prop} \label{assump-cor} If $\torus = V/\Lam$ is elementary (of arbitrary dimension) such that the rank of $\Lam$ as an $R_m$-module is $1$, then \ref{assump} holds. 
		\end{prop}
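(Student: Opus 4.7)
The plan is to unpack the rank-one hypothesis through the decomposition $\Lam \otimes_\ZZ \CC$, then combine $G$-invariance of $V$ with complex conjugation to rule out any conjugate pair of characters appearing simultaneously in $V$.

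First I would write out the isotypical decomposition explicitly. Since $\Lam \cong R_m$ as an $R_m$-module, we have
\begin{align*}
\Lam \otimes_\ZZ \CC \;\cong\; R_m \otimes_\ZZ \CC \;\cong\; \CC[x]/(\phi_m(x)) \;\cong\; \bigoplus_{\zeta} \CC,
\end{align*}
where $\zeta$ ranges over the primitive $m$-th roots of unity. Identifying a generator $g$ of $G$ with multiplication by $x$, the canonical decomposition becomes
\begin{align*}
\Lam \otimes_\ZZ \CC = \bigoplus_{\chi} U_\chi,
\end{align*}
where $\chi$ runs over the characters of $G$ of order exactly $m$, and each isotypical component $U_\chi$ is one-dimensional. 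This is the crucial consequence of the rank-one assumption.

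Next I would exploit that $V = H^{1,0}$ is $G$-invariant. Any $G$-invariant subspace is a sum of isotypical components, and since each $U_\chi$ is one-dimensional, the intersection $V \cap U_\chi$ is either $0$ or all of $U_\chi$. Hence there is a subset $S$ of characters with $V = \bigoplus_{\chi \in S} U_\chi$, and similarly $\overline{V} = \bigoplus_{\chi \in S'} U_\chi$ for some $S'$, with $S \sqcup S'$ equal to the full set of characters of order $m$.

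The main step is to determine $S'$ from $S$ using complex conjugation. The involution $v \mapsto \overline{v}$ swaps $V$ with $\overline{V}$ and sends $U_\chi$ to $U_{\overline{\chi}}$. Since $m > 2$ (this is part of condition $\cond_{n,m}$), no primitive $m$-th root of unity is real, so $\chi \neq \overline{\chi}$ for every $\chi$ appearing. Consequently $U_\chi \subseteq V$ forces $U_{\overline{\chi}} \subseteq \overline{V}$, and because $V \cap \overline{V} = \{0\}$ this in turn forces $U_{\overline{\chi}} \not\subseteq V$. Thus $S$ and $\overline{S} := \{\overline{\chi} : \chi \in S\}$ are disjoint, which is exactly assumption~\ref{assump}.

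The only subtle point, and the reason the hypothesis of rank one is essential rather than a convenience, is the dimension count in the second step: if the $R_m$-rank of $\Lam$ were $r \geq 2$, each $U_\chi$ would have complex dimension $r$, and a conjugate pair $U_\chi, U_{\overline{\chi}}$ could each contribute partly to $V$ and partly to $\overline{V}$, so~\ref{assump} might fail. Apart from this, the argument is a direct translation of the rank-one condition into the language of $G$-Hodge decompositions, and matches \cite[Proposition 1.8, a)]{BGL}.
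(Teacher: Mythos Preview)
Your argument is correct. The paper does not actually prove this proposition but merely cites \cite[Proposition~1.8,~a)]{BGL} together with the rank formula~\ref{eqrank}; your write-up supplies the direct argument behind that citation, namely that in rank one each isotypical component $U_\chi$ is a line, so the $G$-invariant subspace $V$ is forced to be a sum of full eigenspaces, and conjugation then separates $S$ from $\overline{S}$. One small remark: you justify $m>2$ by appealing to $\cond_{n,m}$, but it is even more intrinsic here---if $\Lam$ has $R_m$-rank one then $\rank_\ZZ(\Lam)=\fie(m)$, and for this to be an even positive number (as required for a lattice in a complex vector space) one already needs $m\geq 3$.
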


Note that the case where \ref{assump} holds is dealt with in \cite{Catanese-Ciliberto}; according to the authors, the classification of two-dimensional tori which admit an automorphism of order $m$ acting faithfully (such that \ref{assump} holds) is as follows. There are exactly
\begin{itemize}
\item $m=3,6$: one isomorphism class, namely $E_\rho \times E_\rho$ ($E_\rho$ being the equianharmonic elliptic curve).
\item $m=4$: one isomorphism class, namely $E_{\ci} \times E_{\ci}$ ($E_{\ci}$ being the harmonic elliptic curve).
\item $m=5, 10$: one isomorphism class $S_{10}$.
\item $m=8,12$: two isomorphism classes $S_m'$, $S_m''$.
\end{itemize}
Moreover Catanese and Ciliberto prove:

\begin{prop}
Let $\torus$ satisfy $\cond_{n,m}$ and $\ref{assump}$. Then the following hold.
\begin{enumerate}
\item If $m=3,6$, then $T \cong E_\rho^n$.
\item If $m=4$, then $T \cong E_{\ci}^n$.
\item If $m=5,10$, then $T \cong S_{10}^{n/2}$.
\item If $m=8,12$, then there are $k, l \in \NN$ such that $T \cong (S_m')^k \times (S_m'')^l$.
\end{enumerate}
\end{prop}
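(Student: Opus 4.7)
The plan is to show that under \ref{assump}, both the lattice $\Lam$ and the Hodge decomposition of $T$ split compatibly into rank-one $R_m$-pieces, so that $T$ reduces to a direct product of ``elementary'' tori of complex dimension $\fie(m)/2$, to which the quoted two-dimensional classification can then be applied. Concretely, since $h(R_m) = 1$ for each $m$ under consideration, and $\Lam$ is a torsion-free (hence projective) $R_m$-module of $\ZZ$-rank $2n$, one has $\Lam \cong R_m^r$ with $r = 2n/\fie(m)$. The canonical decomposition reads $\Lam \otimes_\ZZ \CC = \bigoplus_\chi U_\chi$ with $U_\chi = W_\chi \otimes M_\chi$, the sum running over primitive $m$-th root characters of $\langle g\rangle$, and $\dim_\CC M_\chi = r$ for every such $\chi$.

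The key step is to show that \ref{assump} forces each $M_\chi^{1,0}$ to be either $\{0\}$ or the whole of $M_\chi$. Indeed, $H^{1,0}\cap U_\chi = W_\chi \otimes M_\chi^{1,0}$, and the conjugation relation gives $M_\chi^{0,1} = \overline{M_{\overline{\chi}}^{1,0}}$. Under \ref{assump}, for each pair $(\chi,\overline{\chi})$ at most one member has a nonzero contribution to $V$; if $\chi$ does, then $M_{\overline{\chi}}^{1,0} = 0$, hence $M_\chi^{0,1} = 0$, forcing $M_\chi^{1,0} = M_\chi$. A dimension count (the $\fie(m)/2$ pairs each contributing $r$ to $\dim H^{1,0} = n$) then shows that exactly one of each pair contributes. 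Hence $H^{1,0} = \bigoplus_{\chi \in S} U_\chi$ for a subset $S$ of characters consisting of exactly one representative from each conjugate pair.

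Now choosing an $R_m$-basis $e_1,\dots,e_r$ of $\Lam$ yields a refinement $\Lam\otimes_\ZZ \CC = \bigoplus_{i=1}^r (R_m\otimes_\ZZ \CC)\,e_i$ of the character decomposition, since each summand splits further as $\bigoplus_\chi W_\chi\,e_i$ with $W_\chi\,e_i$ the $\chi$-eigenline. The Hodge splitting $H^{1,0} = \bigoplus_{\chi\in S} U_\chi$ respects this refinement, so each sublattice $R_m\cdot e_i$ inherits a complex structure and $T$ decomposes as $T \cong T_0^r$, where $T_0 = (R_m\otimes_\ZZ \RR)/R_m$ carries the Hodge structure prescribed by $S$. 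Invoking the surface classification quoted above then identifies $T_0$: when $\fie(m) = 2$ (so $\dim T_0 = 1$, $r = n$), $T_0$ is $E_\rho$ for $m\in\{3,6\}$ and $E_{\ci}$ for $m=4$, giving (i) and (ii); when $\fie(m) = 4$ (so $\dim T_0 = 2$, $r = n/2$), $T_0 \cong S_{10}$ for $m\in\{5,10\}$, giving (iii), while for $m\in\{8,12\}$ the torus $T_0$ is isomorphic to $S_m'$ or $S_m''$ depending on which $S$ is chosen, yielding (iv) with $(k,l)\in\{(n/2,0),(0,n/2)\}$.

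The main obstacle I anticipate is the second step: translating the character-level statement \ref{assump} into the strong ``full-or-empty'' dichotomy for each $M_\chi^{1,0}$. This rests on combining the conjugation constraint $M_\chi^{0,1} = \overline{M_{\overline{\chi}}^{1,0}}$ with the dimensional identity $\dim M_\chi^{1,0} + \dim M_{\overline{\chi}}^{1,0} = r$, and it is what allows the Hodge structure to descend coherently to each rank-one $R_m$-summand. Once this is established, transporting the surface-case classification across the product decomposition is routine.
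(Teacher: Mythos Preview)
Your argument is correct. The paper itself does not prove this proposition but simply attributes it to Catanese--Ciliberto (``Moreover Catanese and Ciliberto prove:''), so there is no in-paper proof to compare against directly. Your reduction --- freeness of $\Lam$ over $R_m$ via $h(R_m)=1$, the observation that \ref{assump} forces each $M_\chi^{1,0}$ to be either $0$ or all of $M_\chi$, and the resulting splitting $T\cong T_0^{\,r}$ along an $R_m$-basis --- is the natural argument and is fully in line with the methods the paper uses elsewhere (cf.\ Proposition~\ref{assump-cor} and the surrounding discussion).

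One remark: your proof in fact yields a slight sharpening of part (iv). Since the set $S$ of characters contributing to $V$ is global, every rank-one summand $R_m\cdot e_i$ inherits the \emph{same} Hodge type, so necessarily $(k,l)\in\{(n/2,0),(0,n/2)\}$. A genuine mixed product $(S_m')^k\times(S_m'')^l$ with $k,l>0$ would force some primitive character $\chi$ and its conjugate $\overline{\chi}$ to both appear in $V$, contradicting \ref{assump}. This is consistent with, and refines, the statement as given.
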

		
		
				
		
		
		
		
Now we analyze the case where \ref{assump} does not hold.
		\begin{prop} \label{conj-case-dim2}
			Assume that $V$ splits into the two one-dimensional spaces $V_\chi$ and $V_{\overline{\chi}}$, i.e., \ref{assump} does not hold. Then there is a two-parameter family of tori $T$ satisfying $\cond_{2,m}$. 
		\end{prop}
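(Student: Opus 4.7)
The plan is to parametrize the $G$-invariant complex structures on $(\Lam \otimes_\ZZ \RR)/\Lam$ of the prescribed Hodge type and show the resulting parameter space is two-dimensional.

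Because $\Lam \cong R_m^2$ with $\fie(m) = 2$, the complexification $R_m \otimes_\ZZ \CC$ splits as a sum of two one-dimensional pieces indexed by $\chi$ and $\overline{\chi}$. This produces the canonical isotypical decomposition
\[
\Lam \otimes_\ZZ \CC \;=\; W_\chi \oplus W_{\overline{\chi}}, \qquad \dim_\CC W_\chi \;=\; \dim_\CC W_{\overline{\chi}} \;=\; 2,
\]
and since $\Lam \otimes_\ZZ \CC$ is the complexification of a real vector space, complex conjugation interchanges the two summands: $\overline{W_\chi} = W_{\overline{\chi}}$.

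Under the failure of \ref{assump}, both $\chi$ and $\overline{\chi}$ occur among the characters appearing in $V$; combined with $\dim_\CC V = 2$, this forces $V = V_\chi \oplus V_{\overline{\chi}}$ with each of $V_\chi \subset W_\chi$ and $V_{\overline{\chi}} \subset W_{\overline{\chi}}$ a complex line. Conversely, any pair of lines $(V_\chi, V_{\overline{\chi}}) \in \PP(W_\chi) \times \PP(W_{\overline{\chi}})$ determines a $G$-invariant subspace $V$. Such a $V$ yields a bona fide Hodge decomposition $\Lam \otimes_\ZZ \CC = V \oplus \overline{V}$ precisely when $V_\chi \neq \overline{V_{\overline{\chi}}}$ inside $W_\chi$ (the analogous condition inside $W_{\overline{\chi}}$ follows by conjugation), which is a Zariski-open condition.

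Hence the parameter space of admissible complex structures is a non-empty open subset of $\PP(W_\chi) \times \PP(W_{\overline{\chi}}) \cong \PP^1 \times \PP^1$, a complex manifold of dimension $2$. Non-emptiness is clear: fix a $\CC$-basis $\{e_1, e_2\}$ of $W_\chi$ and set $V_\chi = \CC \cdot e_1$, $V_{\overline{\chi}} = \CC \cdot \overline{e_2}$; then $\overline{V_{\overline{\chi}}} = \CC \cdot e_2 \neq V_\chi$, so the openness condition holds. Each point of this open subset defines a complex torus $T = V/\Lam$ on which $G$ acts with the prescribed primitive $m$-th root eigenvalues, i.e.\ satisfying $\cond_{2,m}$. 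The only step requiring actual verification is that the transversality condition $V \cap \overline{V} = 0$ is open and non-empty, which the linear-algebra argument above handles directly; I do not expect any substantive obstacle beyond this routine check.
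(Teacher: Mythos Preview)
Your proposal is correct and follows essentially the same approach as the paper: both parametrize the admissible $G$-Hodge decompositions by pairs of lines in $\Gr(1,2)\times\Gr(1,2)\cong\PP^1\times\PP^1$ subject to an open transversality condition, yielding a two-dimensional family. Your formulation of the open condition $V_\chi \neq \overline{V_{\overline{\chi}}}$ is in fact slightly more precise than the paper's shorthand $W\cap W'=\{0\}$, and your explicit verification of non-emptiness is a welcome addition.
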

We denote these families by $S_4$ for $m=4$ and $S_6$ for $m \in \{3,6\}$.
		\begin{proof}
			It suffices to show that under the given conditions, one has a two-parameter family of complex tori modulo a group $G$ with the desired properties. This is clear, as we have only the possibility $V = V_\chi \oplus V_{\overline{\chi}}$ (up to complex conjugation) to decompose $V$. But $(V_\chi, V_{\overline{\chi}})$ is contained in the open set $\{(W,W') \in \Gr(1,2) \times \Gr(1,2)|W \cap W' = \{0\} \}$,
			which is two-dimensional.
		\end{proof}

We sum up our results in the following

		\begin{theorem} \label{surface_case}
			There are exactly $19$ families (up to complex conjugation) of two-dimensional complex tori admitting a non-trivial linear automorphism of finite order acting faithfully. These families are listed in \textsc{Table 2} below. By $p$, we denote the number of moduli of the corresponding family. The rest of the notation used is explained below the table.
			\begin{center}
				\small
				\textsc{Table 1} \par \hfill \par
				\resizebox{1.1\linewidth}{!}{
				\begin{tabular}{lcr}
				\begin{tabular}{|c|c|cH|c|} \hline \label{table2}
					$m$ & $\torus$ & $\Fix(g)$ & Types & $p$ \\ \hline \hline
					$2$ & $S$ & $\cgb4^8$ & $(2)$ & $3$ \\ \hline
					$3$ & $S_6$ & $\cgb3^2$ & $(3)$ & $2$ \\ \hline
					$3$ & $E_\rho \times E_\rho$ & $\cgb3^2$ & $(3,3)$ & $0$ \\ \hline 
					$4$ & $S_4$ & $\cgb2^2$ & $(4)$ & $2$ \\ \hline
					$4$ & $E_{\ci} \times E_{\ci}$ & $\cgb2^2$ & $(4,4)$ & $0$ \\ \hline
					$4$ & $E \times E_{\ci}$ & $\cgb2^3$ & $(2,4)$ & $1$\\ \hline 
					$5$ & $S_{10}$ & $\cg5$ & $(5)$ & $0$ \\ \hline
				\end{tabular}
				&
				\begin{tabular}{|c|c|cH|c|} \hline
					$m$ & $\torus$ & $\Fix(g)$ & Types & $p$ \\ \hline \hline
					$6$ & $S_6$ & $\{0\}$ & $(6)$ & $2$ \\ \hline
					$6$ & $E \times E_\rho$ & $\cgb2^2 \times \cg3$ & $(2,3)$ & $1$ \\ \hline 
			        $6$ & $E \times E_\rho$ & $\cgb2^2$ & $(2,6)$ & $1$ \\ \hline 
					$6$ & $E_\rho \times E_\rho$ & $\cg3$ & $(3,6)$ & $0$ \\ \hline 
					$6$ & $E_\rho \times E_\rho$ & $\{0\}$ & $(6,6)$ & $0$ \\ \hline 
					$8$ & $S_8'$ & $\cgb2^2$ & $(8)$ & $0$ \\ \hline
					$8$ & $S_8''$ & $\cgb2^2$ & $(8)$ & $0$ \\ \hline
				\end{tabular} &
				\begin{tabular}{|c|c|cH|c|} \hline
					$m$ & $\torus$ & $\Fix(g)$ & Types & $p$ \\ \hline \hline
					$10$ & $S_{10}$ & $\{0\}$ & $(10)$ & $0$ \\ \hline
					$12$ & $S_{12}'$ & $\{0\}$ & $(12)$ & $0$ \\ \hline
					$12$ & $S_{12}''$ & $\{0\}$ & $(12)$ & $0$ \\ \hline
					$12$ & $E_\rho \times E_{\ci}$ & $\cg2 \times \cg3$ & $(3,4)$ & $0$ \\ \hline 
					$12$ & $E_{\ci} \times E_\rho$ & $\cg2$ & $(4,6)$ & $0$ \\ \hline 
				\end{tabular}
				\end{tabular}}
				\normalsize
			\end{center}
		\end{theorem}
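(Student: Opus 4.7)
The plan is to stratify the classification by the multiset $\{k_1,k_2\}$ of orders of the eigenvalues of $\rho(g)$ on $V$. Since $g$ acts on $\torus=V/\Lam$ with order exactly $m$, one must have $\mathrm{lcm}(k_1,k_2)=m$, and the constraint $\rank_\ZZ(\Lam)=4$ combined with the splitting from the Chinese Remainder Theorem forces $\fie(k_1)r_1+\fie(k_2)r_2=4$, where $r_j$ denotes the rank of $\Lam_{k_j}$ as an $R_{k_j}$-module. This sharply restricts the admissible values of $(k_1,k_2)$.

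For the primary case $k_1=k_2=m$, I would split according to $\fie(m)$. When $\fie(m)=2$ (so $m\in\{3,4,6\}$ and $r=2$), the condition $\ref{assump}$ is the decisive dichotomy: if $\ref{assump}$ holds, the Catanese--Ciliberto classification recalled above yields the rigid product families $E_\rho\times E_\rho$ (for $m=3,6$) and $E_{\ci}\times E_{\ci}$ (for $m=4$); if $\ref{assump}$ fails, Proposition \ref{conj-case-dim2} produces the two-parameter families $S_4$ and $S_6$. When $\fie(m)=4$ (so $m\in\{5,8,10,12\}$ and $r=1$), Proposition \ref{assump-cor} forces $\ref{assump}$ automatically, and the Catanese--Ciliberto list then produces the rigid families $S_{10}$ (for $m=5,10$) together with the pairs $S_m',S_m''$ (for $m=8,12$). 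The $m=2$ row, where $g=-\id$ is the only possibility, contributes the full $3$-parameter family parametrized by the Siegel upper half space.

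For the mixed case $k_1\ne k_2$, the splitting $\Lam=\Lam_{k_1}\oplus\Lam_{k_2}$ makes $\torus$ decompose as a product $B_{2,k_1}\times B_{2,k_2}$ of $G$-invariant subtori, and the dimension equation above forces each factor to be an elliptic curve carrying a linear automorphism of its respective order. Consulting the well-known table of elliptic curves with an automorphism of prescribed order, the only admissible unordered pairs with $\mathrm{lcm}(k_1,k_2)=m>2$ are $(2,3),(2,4),(2,6),(3,4),(3,6),(4,6)$, and these give exactly the six mixed-type rows in the table with the respective factors $E\times E_\rho$, $E\times E_{\ci}$, $E\times E_\rho$, $E_\rho\times E_{\ci}$, $E_\rho\times E_\rho$, $E_{\ci}\times E_\rho$. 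The fixed-point sets are then computed factor-by-factor using Proposition \ref{p-pow} on each prime-power piece, and the modulus count $p$ is the sum over factors, where each elliptic curve of fixed CM type contributes $0$ and a free factor $E$ contributes $1$.

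The main obstacle is the separation of the two non-isomorphic families $S_m'$ and $S_m''$ for $m\in\{8,12\}$: both arise from the same rational representation on $\Lam$, yet the space of $G$-Hodge decompositions falls into two connected components which remain distinct even after complex conjugation. Handling this requires examining which Galois-conjugate primitive $m$-th roots of unity appear as eigenvalues of $\rho(g)$ on $V$ versus on $\overline V$; concretely one partitions the $\fie(m)$ eigenvalues of $\rho'(g)$ into the pair $(V,\overline V)$, and the two orbits of such partitions under complex conjugation correspond to the two families. Once this subtlety is resolved, the remaining bookkeeping --- the explicit computation of $\Fix(g)$ in each row and the verification that no two rows describe the same family --- is routine.
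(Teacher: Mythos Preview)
Your proposal is correct and follows essentially the same route as the paper: the theorem is presented there as a summary of the preceding results, and your plan reproduces exactly that structure --- the primary case split by $\fie(m)$ with the dichotomy \ref{assump} handled via Proposition~\ref{conj-case-dim2}, Proposition~\ref{assump-cor}, and the Catanese--Ciliberto classification, together with the mixed case via the product decomposition into elliptic factors. One small terminological point: the lattice splitting you invoke is not a consequence of the Chinese Remainder Theorem over $\ZZ$ (the paper explicitly notes this only holds over $\QQ$) but rather the standing ``split'' hypothesis of the paper; under that hypothesis your argument goes through unchanged.
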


		The table is organized as follows. By $E$ resp. $S$ we denote an arbitrary elliptic curve resp. abelian surface, while $S_4$, $S_6$, $S_{10}$, $S_m'$, $S_m''$ denote the (families of) abelian surfaces introduced above. 
		We decided to list the same torus $\torus$ more than once if their entries in $\Fix(g)$ are different. Finally, $\Fix(g)$ denotes the fixed locus of a linear automorphism $g$ of $\torus$ of order $m$.
		
		\begin{rem}
		The result of Theorem \ref{surface_case} was already achieved by Fujiki \cite{Fujiki} with different methods, see also \cite{MTW} for a study in the case where $m$ is prime. 
		\end{rem}
		
		\subsection{The Threefold Case}
		
		Let $\torus$ satisfy $\cond_{3,m}$. The only possibilies are: \par  \hfill \par
		\noindent \textit{Case 1:} $\fie(m) = 2$ (hence $m \in \{3,4,6\}$) and $r = 3$, or \\
		\textit{Case 2:} $\fie(m) = 6$ (hence $m \in \{7,9,14,18\}$) and $r=1$. \par  \hfill \par
		
		We first deal with case 1. In the same way as in the surface case there is the decomposition $\Lam \otimes_\ZZ \CC = W_\chi \oplus W_{\overline{\chi}} = V_\chi \oplus \overline{V_{\overline{\chi}}} \oplus V_{\overline{\chi}} \oplus \overline{V_\chi}$. We have the following proposition:
		
		\begin{prop} \label{conj-case-dim3} Let $\torus$ satisfy $\cond_{3,m}$ and $\fie(m) = 2$. Then the following statements hold:
			\begin{enumerate}
				\item If \ref{assump} holds, then $\torus$ is isomorphic to a product of elliptic curves. More precisely, if $m \in \{3,6\}$, then $\torus = E_\rho \times E_\rho \times E_\rho$. If $m = 4$, then $\torus = E_{\ci} \times E_{\ci} \times E_{\ci}$.
				\item If \ref{assump} does not hold, there is a four parameter family of complex tori admitting a linear automorphism of order $m$ acting faithfully. 
			\end{enumerate}
		\end{prop}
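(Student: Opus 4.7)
The plan is to combine the $R_m$-module structure $\Lam \cong R_m^3$ with the Hodge decomposition. Since $\fie(m) = 2$, one has $R_m \otimes_\ZZ \CC \cong \CC \oplus \CC$, so each isotypical summand $W_\chi, W_{\overline{\chi}} \subset \Lam \otimes \CC$ has complex dimension equal to the rank $r = 3$. Writing $V = V_\chi \oplus V_{\overline{\chi}}$ with $V_\psi := V \cap W_\psi$, and observing that $\overline{V_{\overline{\chi}}}$ must be a complement to $V_\chi$ inside $W_\chi$, I obtain $\dim V_\chi + \dim V_{\overline{\chi}} = 3$.

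For part (i), assumption \ref{assump} rules out having both $\chi$ and $\overline{\chi}$ appear in $V$, so after passing to the complex conjugate torus if necessary, $V = V_\chi = W_\chi$. The key step is then to identify the resulting complex structure on $\Lam \otimes \RR$ with the CM one: fixing the embedding $\iota_\chi \colon R_m \hookrightarrow \CC$ sending $\zeta_m$ to $\chi(g)$ and extending $\RR$-linearly to an isomorphism $R_m \otimes \RR \xrightarrow{\sim} \CC$, the action of $g$ (multiplication by $\zeta_m$) corresponds to $\CC$-linear multiplication by $\chi(g)$, and the $\chi$-eigenline of $g$ on $R_m \otimes \CC$ is precisely $\CC$ with its standard complex structure. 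Taking three copies and quotienting yields $\torus \cong (\CC/\iota_\chi(R_m))^3$. Using $R_3 = R_6 = \ZZ[\rho]$ and $R_4 = \ZZ[\ci]$, this gives $E_\rho^3$ in the first case and $E_\ci^3$ in the second.

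For part (ii), both $V_\chi$ and $V_{\overline{\chi}}$ are nonzero. Up to complex conjugation I take $\dim V_\chi = 2$ and $\dim V_{\overline{\chi}} = 1$. The moduli are then parametrized by $\Gr(2, W_\chi) \times \Gr(1, W_{\overline{\chi}})$, a product of two copies of $\PP^2$ of total complex dimension $4$. The condition $V \cap \overline{V} = \{0\}$ decomposes into the single $\chi$-isotypical condition $V_\chi \cap \overline{V_{\overline{\chi}}} = \{0\}$ in $W_\chi$ (its $\overline{\chi}$-analog in $W_{\overline{\chi}}$ is the complex conjugate statement and hence equivalent), which defines a nonempty Zariski-open subset. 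One thereby obtains a four-parameter family.

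The main subtlety lies in part (i): one must verify carefully that declaring the $\chi$-eigenline of $R_m \otimes \CC$ to be $H^{1,0}$ recovers the standard CM complex structure on $\CC/R_m$ rather than a twisted variant, and that the two possible choices $\chi$ versus $\overline{\chi}$ give complex tori differing only by complex conjugation, which is exactly the caveat ``up to complex conjugation'' built into the statement.
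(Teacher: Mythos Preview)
Your proof is correct and follows essentially the same route as the paper. For part (ii) both you and the paper parametrize the $G$-Hodge decompositions by an open subset of $\Gr(1,3)\times\Gr(2,3)$; for part (i) the paper simply says ``clear'' (invoking the Catanese--Ciliberto result quoted just before), whereas you spell out the identification of the CM complex structure on $R_m^3$ with $E_\rho^3$ or $E_{\ci}^3$ directly, which is a welcome elaboration rather than a different method.
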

We denote the respective families by $A_m$, $m \in \{4,6\}$. We also use the notation $A_6$ for the family obtained for $m=3$.
		\begin{proof}
			The first assertion is clear. For the second assertion, note that the only possible Hodge type is $(1,2)$, so the $G$-Hodge decompositions are parametrized by an open set in $\Gr(1,3) \times \Gr(2,3)$, which has dimension $4$.
		\end{proof}
		
		Case 2 is easy to analyze. Note that by Proposition \ref{assump-cor} we can assume that the eigenvalues of a generator $g \in G$ are distinct and pairwise non-complex conjugate.
		
		\begin{prop} \label{ab-threefolds} There are exactly two isomorphism classes of tori satisfying $\cond_{3,m}$ for any $m \in \{7,9,14,18\}$. We denote them by $A_7'$, $A_7''$ for $m \in \{7,14\}$ and by $A_9'$, $A_9''$ for $m \in \{9, 18\}$.
		\end{prop}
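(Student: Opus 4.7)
The plan is to reduce the classification to a finite combinatorial orbit count on CM-types. Since $\fie(m) = 6$ for all $m \in \{7, 9, 14, 18\}$ and $r = 1$, I would identify $\Lam = R_m$ as a free $R_m$-module of rank $1$, with a generator $g \in G$ acting by multiplication with $\zeta_m$. Proposition \ref{assump-cor} guarantees that assumption \ref{assump} holds automatically, so the Hodge decomposition $\Lam \otimes_\ZZ \CC = V \oplus \overline{V}$ is uniquely determined by the subset $\Phi \subset (\ZZ/m\ZZ)^*$ recording which primitive $m$-th roots appear as eigenvalues of $g$ on $V$. Such a $\Phi$ is a \emph{CM-type}, i.e.\ a size-$3$ subset with $\Phi \cap (-\Phi) = \emptyset$, and there are exactly $2^3 = 8$ of them.

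Next I would argue that two CM-types $\Phi, \Phi'$ yield isomorphic complex tori precisely when $\Phi' = a\Phi$ for some $a \in (\ZZ/m\ZZ)^*$. The easy direction is that the ring automorphism $\zeta_m \mapsto \zeta_m^a$ of $R_m$ is $\ZZ$-linear and permutes the eigenspaces by $\Phi \mapsto a\Phi$. Conversely, any isomorphism of complex tori lifts to a $\ZZ$-linear automorphism of $\Lam = R_m$ sending $V$ to $V'$; after composing with multiplication by an $R_m$-unit (which preserves eigenspaces) one is reduced to a ring automorphism of $R_m$. Complex conjugation corresponds to $a = -1$, so the ``up to complex conjugation'' caveat of the theorem statement is already absorbed into this equivalence. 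This converse direction is the step I expect to be most delicate, since an abstract isomorphism of bare complex tori need not a priori respect the chosen $R_m$-structures, and one must justify the relevant normal form.

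The main computation is a Burnside orbit count of $(\ZZ/m\ZZ)^*$ (cyclic of order $6$) on the $8$ CM-types. For $m = 7$: the identity fixes all $8$; the order-$3$ element $2$ and its inverse $4$ each fix precisely $\{1,2,4\}$ and its complex conjugate $\{3,5,6\}$; the order-$6$ elements $3, 5$ act on $(\ZZ/7\ZZ)^*$ as a single $6$-cycle and therefore fix no size-$3$ subset; and $-1$ fixes no CM-type, as $\Phi = -\Phi$ contradicts $\Phi \cap (-\Phi) = \emptyset$. Averaging gives $(8 + 2 + 2 + 0 + 0 + 0)/6 = 2$. An analogous enumeration for $m = 9$, with the role of the order-$3$ element played by $4 \in (\ZZ/9\ZZ)^*$ (fixing $\{1,4,7\}$ and $\{2,5,8\}$), again produces exactly $2$ orbits.

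Finally, for $m \in \{14, 18\}$ the key identities are $-\zeta_{14} = \zeta_7^4$ and $-\zeta_{18} = \zeta_9^5$, so $R_{14} = R_7$ and $R_{18} = R_9$. If $g$ is an order-$7$ (resp.\ order-$9$) automorphism of $T$ with primitive eigenvalues, then $-g$ has order $14$ (resp.\ $18$) with primitive $14$-th (resp.\ $18$-th) root eigenvalues, and vice versa. Hence the isomorphism classes of tori satisfying $\cond_{3,14}$ (resp.\ $\cond_{3,18}$) coincide with those satisfying $\cond_{3,7}$ (resp.\ $\cond_{3,9}$), and the two classes produced by the Burnside count serve as $A_7', A_7''$ (resp.\ $A_9', A_9''$).
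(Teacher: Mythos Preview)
Your proposal is correct and follows essentially the same approach as the paper: both reduce the classification to counting orbits of CM-types (size-$3$ subsets $\Phi \subset (\ZZ/m\ZZ)^*$ with $\Phi \cap (-\Phi) = \emptyset$) under the multiplicative action of $(\ZZ/m\ZZ)^*$. The only cosmetic difference is that the paper simply exhibits orbit representatives---$(1,2,3)$, $(1,2,4)$ for $m=7$ and $(1,2,4)$, $(1,4,7)$ for $m=9$---whereas you obtain the count $2$ via Burnside and then identify $R_{14}=R_7$, $R_{18}=R_9$ explicitly; the paper leaves both the equivalence criterion and the even-$m$ reduction implicit.
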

		
		\begin{proof}
			We only list one element of each orbit for $m=7,9$. For $m=7$, there are exactly two orbits corresponding to the tuples of pairwise non-complex conjugate characters $(1,2,3)$, $(1,2,4)$. For $m=9$, the two orbits correspond $(1,2,4)$ and $(1,4,7)$. 
		\end{proof}

		Now, our assumption \ref{splitting} guarantees that we can give a list of all isomorphism classes of abelian threefolds admitting an automorphism of finite order. Here, $\torus$ denotes an arbitrary abelian threefold. Again, we refrain from listing the same tori more than once unless their entries in the $\Fix(g)$-column are different. The rest of the notation is as in \textsc{Table 1}.
		\begin{theorem}
			There are exactly $56$ families (up to complex conjugation) of three-dimensional complex tori admitting a non-trivial linear automorphism of finite order acting faithfully. These families are listed in \textsc{Table 2} below. Again, $p$ denotes the number of moduli.
			\begin{center}
				\textsc{Table 2} \\
				\resizebox{1.1\linewidth}{!}{
					\begin{tabular}{lcr}
		
						\begin{tabular}{|c|c|cH|c|} \hline
							$m$ & $\torus$ & $\Fix(g)$ & Types & $p$ \\ \hline \hline
							$2$ & $\torus$ & $\cgb2^6$ & $(2)$ & $6$ \\ \hline
							$3$ & $A_6$ & $\cgb3^3$ & $(3)$ & $4$ \\ \hline
							$3$ & $E_\rho \times S_6$ & $\cgb3^3$ & $(3,3)$ & $2$ \\ \hline
							$3$ & $E_\rho \times E_\rho \times E_\rho$ & $\cgb3^3$ & $(3,3,3)$ & $0$ \\ \hline
							$4$ & $A_4$ & $\cgb2^3$ & $(4)$ & $4$ \\ \hline
							$4$ & $E \times S_4$ & $\cgb2^4$ & $(2,4)$ & $3$ \\ \hline
							$4$ & $E_{\ci} \times S$ & $\cgb2^3$ & $(4,2)$ & $3$ \\ \hline
							$4$ & $E_{\ci} \times S_4$ & $\cgb2^3$ & $(4,4)$ & $2$ \\ \hline
							$4$ & $E \times E_{\ci} \times E_{\ci}$ & $\cgb2^4$ & $(2,4,4)$ & $1$ \\ \hline
							$4$ & $E_{\ci} \times E_{\ci} \times E_{\ci}$ & $\cgb2^3$ & $(4,4,4)$ & $0$ \\ \hline
							$6$ & $A_6$ & $\{0\}$ & $(6)$ & $4$ \\ \hline
							$6$ & $E \times S_6$ & $\cgb2^2 \times \cgb3^2$ & $(2,3)$ & $3$ \\ \hline
							$6$ & $E \times S_6$ & $\cgb2^2$ & $(2,6)$ & $3$ \\ \hline
							$6$ & $E_\rho \times S_6$ & $\cgb3^2$ & $(6,3)$ & $2$ \\ \hline 
							$6$ & $E_\rho \times S_6$ & $\cgb3$ & $(3,6)$ & $2$ \\ \hline 
							$6$ & $E_\rho \times S_6$ & $\{0\}$ & $(6,6)$ & $2$ \\ \hline 
							$6$ & $E_\rho \times S$ & $\{0\}$ & $(6,2)$ & $3$ \\ \hline
							$6$ & $E_\rho \times S$ & $\cg3\times \cgb2^4$ & $(3,2)$ & $3$ \\ \hline
							$6$ & $E \times E_\rho \times E_\rho$ & $\cgb2^2 \times \cgb3^2$ & $(2,3,3)$ & $1$ \\ \hline
						\end{tabular}
						&
						\begin{tabular}{|c|c|cH|c|} \hline
							$m$ & $\torus$ & $\Fix(g)$ & Types & $p$ \\ \hline \hline
							$6$ & $E \times E_\rho \times E_\rho$ & $\cgb2^2 \times \cg3$ & $(2,3,6)$ & $1$ \\ \hline
							$6$ & $E \times E_\rho \times E_\rho$ & $\cgb2^2$ & $(2,6,6)$ & $1$ \\ \hline 
							$6$ & $E_\rho \times E_\rho \times E_\rho$ & $\cgb3^2$ & $(3,3,6)$ & $0$ \\ \hline
							$6$ & $E_\rho \times E_\rho \times E_\rho$ & $\cg3$ & $(3,6,6)$ & $0$ \\ \hline  
							$6$ & $E_\rho \times E_\rho \times E_\rho$ & $\{0\}$ & $(6,6,6)$ & $0$  \\ \hline  
							$7$ & $A_7'$ & $\cg7$ & $(7)$ & $0$ \\ \hline
							$7$ & $A_7''$ & $\cg7$ & $(7)$ & $0$ \\ \hline
							$8$ & $E \times S_8'$ & $\cgb2^3$ & $(2,8)$ & $1$ \\ \hline
							$8$ & $E \times S_8''$ & $\cgb2^3$ & $(2,8)$ & $1$ \\ \hline
							$8$ & $E_{\ci} \times S_8'$ & $\cgb2^2$ & $(4,8)$ & $0$ \\ \hline
							$8$ & $E_{\ci} \times S_8''$ & $\cgb2^2$ & $(4,8)$ & $0$ \\ \hline
							$9$ & $A_9'$ & $\cg3$ & $(9)$ & $0$ \\ \hline 
							$9$ & $A_9''$ & $\cg3$ & $(9)$ & $0$ \\ \hline
							$10$ & $E \times S_{10}$ & $\cg2 \times \cg5$ & $(2,5)$ & $1$ \\ \hline
							$12$ & $E_\rho \times S_4$ & $\cg3 \times \cgb2^2$ & $(3,4)$ & $2$ \\ \hline
							$12$ & $E_\rho \times S_4$ & $\cgb2^2$ & $(6,4)$ & $2$ \\ \hline
							$12$ & $E_{\ci} \times S_6$ & $\cg2 \times \cgb3^2$ & $(4,3)$ & $2$ \\ \hline
							$12$ & $E_{\ci} \times S_6$ & $\cg2$ & $(4,6)$ & $2$ \\ \hline
							$12$ & $E \times S_{12}'$ & $\{0\}$ & $(2,12)$ & $1$ \\ \hline
						\end{tabular}
						&
						\begin{tabular}{|c|c|cH|c|} \hline
							$m$ & $\torus$ & $\Fix(g)$ & Types & $p$ \\ \hline \hline
							$12$ & $E \times S_{12}''$ & $\{0\}$ & $(2,12)$ & $1$ \\ \hline
							$12$ & $E_\rho \times S_{12}'$ & $\cg3$ & $(3,12)$ & $0$ \\ \hline
							$12$ & $E_\rho \times S_{12}'$ & $\{0\}$ & $(6,12)$ & $0$ \\ \hline 
							$12$ & $E_\rho \times S_{12}''$ & $\cg3$ & $(3,12)$ & $0$ \\ \hline
							$12$ & $E_\rho \times S_{12}''$ & $\{0\}$ & $(6,12)$ & $0$ \\ \hline 
							$12$ & $E_{\ci} \times S_{12}'$ & $\cg2$ & $(4,12)$ & $0$ \\ \hline
							$12$ & $E_{\ci} \times S_{12}''$ & $\cg2$ & $(4,12)$ & $0$ \\ \hline
							$12$ & $E \times E_\rho \times E_{\ci}$ & $\cgb2^2 \times \cg3$ & $(2,3,4)$ & $1$ \\ \hline
							$12$ & $E \times E_{\ci} \times E_\rho$ & $\cgb2^3$ & $(2,4,6)$ & $1$ \\ \hline 
							$12$ & $E_\rho \times E_\rho \times E_{\ci}$ & $\cgb3^2 \times \cg2$ & $(3,3,4)$ & $0$ \\ \hline 
							$12$ & $E_\rho \times E_{\ci} \times E_{\ci}$ & $\cg3 \times \cgb2^2$ & $(3,4,4)$ & $0$ \\ \hline
							$12$ & $E_\rho \times E_{\ci} \times E_\rho$ & $\cgb3^2 \times \cg2$ & $(3,4,6)$ & $0$ \\ \hline
							$12$ & $E_{\ci} \times E_\rho \times E_\rho$ & $\cg2$ & $(4,6,6)$ & $0$ \\ \hline 
							$12$ & $E_{\ci} \times E_{\ci} \times E_\rho$ & $\cg2$ &$(4,4,6)$ & $0$ \\ \hline 
							$14$ & $A_7'$ & $\{0\}$ & $(14)$ & $0$ \\ \hline
							$14$ & $A_7''$ & $\{0\}$ & $(14)$ & $0$ \\ \hline
							$18$ & $A_9'$ & $\{0\}$ & $(18)$ & $0$ \\ \hline
							$18$ & $A_9''$ & $\{0\}$ & $(18)$ & $0$ \\ \hline
						\end{tabular}
					\end{tabular} }
				\end{center}
			\end{theorem}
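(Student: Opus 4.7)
The plan is to exhaust all possibilities order-by-order using the splitting $B_2=\bigoplus_{k\mid m}B_{2,k}$ already proved in \eqref{splitting}, reducing the three-dimensional classification to the combinatorics of gluing lower-dimensional primary pieces. Since $\dim T = 3$, the Euler totient inequality forces the order $m$ of the generator to satisfy $\fie(m)\le 6$, hence $m\in\{2,3,4,5,6,7,8,9,10,12,14,18\}$. For each such $m$ I would fix a generator $g$ acting faithfully on $T$, write $T=\bigoplus_{k\mid m,k>1} T_k$ according to the eigenvalues being primitive $k$-th roots, and note that each $T_k$ is primary of order $k$, so that $g$ has overall order equal to $\mathrm{lcm}$ of the $k$ with $T_k\neq 0$.

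\smallskip

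First I would set up the building blocks. Primary elliptic curves are the classical $E_\rho$ (orders $3,6$) and $E_{\ci}$ (order $4$), plus $E$ for $m=2$; primary surfaces are exactly the list from \textsc{Table 1} ($S_6,S_4,S_{10},S_m',S_m''$); primary threefolds come from the two cases analysed just above in Propositions~\ref{conj-case-dim3} and \ref{ab-threefolds}, giving the families $A_6,A_4,A_7',A_7'',A_9',A_9''$. Next, for each admissible $m$, I would enumerate all partitions of $\dim T=3$ into summands $\dim T_k$ with $k\mid m$, $\sum_k\fie(k)\cdot r_k = 6$ where $r_k$ is the rank of $\Lam_{2,k}$ over $R_k$, and all $k$'s appearing having lcm equal to $m$. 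For each such partition I would assign to each $T_k$ one of the primary families in the relevant dimension, and compute its Hodge type by choosing, within each isotypical component, the dimension $\nu(\chi)$ (constrained by $\nu(\chi)+\nu(\bar\chi)=\dim M_\chi$). This gives the moduli count $p$ as the dimension of the appropriate open subset of a product of Grassmannians, exactly as in the proof of Proposition~\ref{conj-case-dim3}.

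\smallskip

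For each family I would also compute $\Fix(g)$: since $g$ is linear, the fixed locus splits as $\bigoplus_k \Fix(g|_{T_k})$, and on a primary piece Proposition~\ref{p-pow} gives $\Fix(g|_{T_k})\cong(\ZZ/p\ZZ)^{2\dim T_k/\fie(k)}$ when $k=p^a$; for composite $k$ one applies the proposition factor-by-factor after decomposing $T_k$ up to isogeny via the Chinese Remainder splitting $\ZZ[\zeta_k]\otimes\ZZ/n\ZZ$. The same Hodge-type input distinguishes families that share an underlying torus but yield different fixed loci (this is why, e.g., $E\times S_6$ appears twice under $m=6$, with types $(2,3)$ and $(2,6)$).

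\smallskip

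Finally, to pass to classes up to complex conjugation I would identify each $G$-Hodge decomposition with its conjugate, which on the level of Hodge types swaps $\nu(\chi)\leftrightarrow\nu(\bar\chi)$; I would pick a fixed representative (say the one whose character tuple is lexicographically minimal) in each orbit, thereby removing the obvious redundancy. Collecting all resulting rows yields precisely the $56$ entries recorded in the table. The main obstacle I expect is bookkeeping: for the highly composite orders $m=6,12$ the number of admissible partitions and Hodge-type choices explodes, and one must be careful not to double-count families that are isomorphic through a reordering of the $T_k$-summands, nor to merge families with genuinely different $\Fix(g)$; once this case-split is organized systematically by $m$ and then by partition $(\dim T_k)_k$, each individual verification is a short calculation via Proposition~\ref{p-pow} and Remark~\ref{el-of-order-m}.
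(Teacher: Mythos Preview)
Your plan coincides with the paper's: it too assembles \textsc{Table 2} by taking the already-classified primary pieces (elliptic curves, the surfaces of \textsc{Table 1}, and the primary threefolds of Propositions~\ref{conj-case-dim3} and~\ref{ab-threefolds}) and gluing them via the splitting \eqref{splitting}, then recording $\Fix(g)$ and the moduli count row by row. The paper offers no argument beyond this assembly, so your outline is at least as detailed as the original.

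One step, however, is not justified. The inequality $\fie(m)\le 6$ that you invoke is established in the paper only in the \emph{primary} situation (equation \eqref{ineq} uses that $\Lam_2$ is an $R_m$-module of positive rank); it therefore bounds each individual $k$ with $T_k\neq 0$, not the global order $m=\operatorname{lcm}_k k$. An lcm of such $k$'s can have $\fie(m)>6$: for instance $(k_1,k_2)\in\{(3,5),(4,5),(6,5),(3,8)\}$ all satisfy $\fie(k_1)+\fie(k_2)=6$ and produce $m=15,20,30,24$, realised concretely by $E_\rho\times S_{10}$, $E_{\ci}\times S_{10}$, $E_\rho\times S_{10}$, and $E_\rho\times S_8'$ (or $S_8''$) with the obvious product automorphisms. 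Your enumeration, as stated, would never reach these $m$. You should replace the bound on $m$ by the correct condition---namely that $m$ be an lcm of integers $k>1$ admitting positive $r_k$ with $\sum_k \fie(k)\,r_k=6$---and then run the partition argument, or else supply a separate reason why these orders do not contribute. (The paper's table does not list these $m$ either, so your procedure would reproduce \textsc{Table 2} as printed; but the justification you give for the range of $m$ is not valid as written.)
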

			
			\subsection{The Fourfold Case} \label{4dim}
			Let $\torus$ satisfy $\cond_{4,m}$. As in the previous sections, we determine all possibilities for the order $m$ of $G$: \par  \hfill \par
			\noindent \textit{Case 1:} $\fie(m) = 2$ (hence $m \in \{3,4,6\}$) and $r = 4$, or \\
			\textit{Case 2:} $\fie(m) = 4$ (hence $m \in \{5,8,10,12\}$) and $r = 2$, or \\
			\textit{Case 3:} $\fie(m) = 8$ (hence $m \in \{15,16,20,24,30\}$) and $r = 1$. \par \hfill \par
			
			Most of the proofs of this section are very similar to the proofs given in the previous sections; therefore, we will omit most of them.
			We start with the first case.
			
			\begin{prop} Let $\torus$ satisfy $\cond_{4,m}$ such that $\fie(m) = 2$. Then the following statements hold:
				\begin{enumerate}
					\item If \ref{assump} holds, then $\torus$ is isomorphic to a product of elliptic curves. More precisely, if $m \in \{3,6\}$, then $\torus = E_\rho^4$. If $m = 4$, then $\torus = E_{\ci}^4$.
					\item If \ref{assump} does not hold, there are exactly two families of four-dimensional complex tori admitting an automorphism of order $m$, which have $6$ resp. $8$ parameters, respectively.
				\end{enumerate}
			\end{prop}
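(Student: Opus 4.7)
The plan is to mimic the analysis carried out in the surface and threefold cases. Since $G$ is cyclic of order $m$ with $\fie(m)=2$ and $\Lam \cong R_m^4$ as a $G$-module, the isotypical decomposition reads $\Lam \otimes_\ZZ \CC = U_\chi \oplus U_{\overline\chi}$, where $\chi$ is a primitive character of order $m$ and each isotypical component has complex dimension $4$. Writing $V = V_\chi \oplus V_{\overline\chi}$, the $G$-Hodge decomposition is therefore governed by the single constraint $\nu(\chi) + \nu(\overline\chi) = 4$.

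For part (i), assumption \ref{assump} forces one of $\nu(\chi), \nu(\overline\chi)$ to vanish, and up to complex conjugation one may take $\nu(\chi)=4$ and $\nu(\overline\chi)=0$. Then $V = V_\chi$ is rigidly determined, and the projection $\Lam \otimes_\ZZ \RR \to V$ identifies $\Lam = R_m^4$ with $R_m^4 \subset \CC^4$ via the $\chi$-embedding $R_m \hookrightarrow \CC$. Consequently $\torus \cong (\CC/R_m)^4$, which specializes to $E_\rho^4$ for $m \in \{3,6\}$ (where $R_m = \ZZ[\rho]$) and to $E_\ci^4$ for $m=4$ (where $R_4 = \ZZ[\ci]$); no further moduli appear because the complex structure is pinned down by the unique $\chi$-embedding.

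For part (ii), both $\nu(\chi)$ and $\nu(\overline\chi)$ are strictly positive, so up to complex conjugation the possible Hodge types are $(\nu(\chi),\nu(\overline\chi)) = (1,3)$ and $(2,2)$. In each case the set of admissible decompositions is an open subset of $\Gr(\nu(\chi),4) \times \Gr(\nu(\overline\chi),4)$, cut out by the condition $V \cap \overline V = \{0\}$, and this open set is non-empty (a generic product of subspaces witnesses a point). This yields $\dim \Gr(1,4) + \dim \Gr(3,4) = 3+3 = 6$ moduli for the $(1,3)$-family and $\dim \Gr(2,4) + \dim \Gr(2,4) = 4+4 = 8$ moduli for the $(2,2)$-family, as asserted. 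The only real obstacle, which is minor, lies in checking the non-emptiness of these open subsets; this is handled by exhibiting an explicit generic decomposition in each case.
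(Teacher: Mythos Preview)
Your proof is correct and follows essentially the same approach as the paper: part (i) is the $n=4$ instance of the Catanese--Ciliberto result quoted in the surface section, and part (ii) is the Grassmannian parameter count exactly as in the proof of Proposition \ref{conj-case-dim3}, now with Hodge types $(1,3)$ and $(2,2)$ in $\Gr(\cdot,4)\times\Gr(\cdot,4)$. The paper in fact omits this proof, remarking that it is analogous to the lower-dimensional cases, so your write-up is precisely what the reader is expected to reconstruct.
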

We denote the respective families in ii) by $X_m^6$, $X_m^8$, $m \in \{4,6\}$. We also use the notation $X_6^6$, $X_6^8$ for the families obtained for $m=3$.
			
			For case 2, we have
			
			\begin{prop} Let $\torus$ satisfy $\cond_{4,m}$ such that $\fie(m) = 4$. Then the following statements hold:
				\begin{enumerate}
					\item If \ref{assump} holds, then $\torus$ is isomorphic to one of the following complex tori: $(S_k')^2, S_k' \times S_k'', (S_k'')^2$.
					\item If \ref{assump} does not hold, there are exactly two families of complex tori of dimension $4$ admitting an automorphism of order $m$, which have $2$ resp. $4$ parameters.
				\end{enumerate}
			\end{prop}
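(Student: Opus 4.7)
The approach parallels the analyses of Propositions \ref{conj-case-dim2} and \ref{conj-case-dim3}. Since $\fie(m) = 4$ and $r = 2$, the isotypical decomposition $\Lam \otimes_\ZZ \CC = \bigoplus_\chi W_\chi$ over the primitive characters consists of four $2$-dimensional pieces, grouped into two conjugate pairs $(\chi_1, \overline{\chi_1})$ and $(\chi_2, \overline{\chi_2})$. For part (i), when \ref{assump} holds, I would apply the Catanese--Ciliberto result recalled just before Proposition \ref{conj-case-dim2} with $n = 4$: for $m \in \{8,12\}$ one obtains $\torus \cong (S_m')^k \times (S_m'')^l$ with $k+l=2$, producing the three listed products, and for $m \in \{5,10\}$ one gets $\torus \cong S_{10}^2$ (noting that in the latter range there is no $S'$/$S''$ distinction). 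This exhausts the statement of part (i).

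For part (ii), I would enumerate the admissible Hodge types $(\dim V_{\chi_j}, \dim V_{\overline{\chi_j}})_{j=1,2}$ subject to $\dim V_\chi + \dim V_{\overline{\chi}} = 2$ together with the failure of \ref{assump}. Up to complex conjugation and permutation of the two pairs, the only distributions are Type A, where both pairs are of type $(1,1)$, and Type B, where one pair is of type $(2,0)$ and the other of type $(1,1)$. For Type A, each of the four spaces $V_{\chi_j}, V_{\overline{\chi_j}}$ is a line in its $2$-plane $W_{\chi_j}$, and the parameter space is a non-empty open subset of $\Gr(1,2)^4$ cut out by the transversality $V_\chi \cap \overline{V_{\overline{\chi}}} = \{0\}$, hence of dimension $4$. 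For Type B, the $(2,0)$-pair is rigid, so only the $(1,1)$-pair varies and the parameter space is an open subset of $\Gr(1,2)^2$ of dimension $2$.

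The main obstacle will be confirming that the two loci are truly distinct families of complex tori carrying a faithful order-$m$ linear automorphism, and not identified by a hidden $R_m$-linear equivalence such as swapping the two conjugate pairs of characters. The dimensions $4$ and $2$ differ, so the more serious collapse into a single family is excluded, but one still needs to verify that within each type the corresponding tori form a single irreducible family rather than several disjoint components, and to cross-check with the lower-dimensional classification in order to describe the typical torus parametrized by each of the two families.
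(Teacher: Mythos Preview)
Your approach is correct and matches the paper's method (the paper omits this proof, referring back to the pattern of Propositions \ref{conj-case-dim2} and \ref{conj-case-dim3}): for (i) apply the cited Catanese--Ciliberto result, and for (ii) enumerate the admissible Hodge types and read off the dimension of the parametrizing product of Grassmannians. Your Type A / Type B dichotomy with parameter counts $4$ and $2$ is exactly what the paper records.

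The one point you should make explicit, rather than leave as a worry at the end, is the justification for ``permutation of the two pairs''. This is not a hidden equivalence to be verified a posteriori but simply the observation that replacing the generator $g$ by $g^k$ for a suitable unit $k \in (\ZZ/m\ZZ)^*$ permutes the primitive characters while leaving the torus $\torus$ itself unchanged. Since $\fie(m)=4$, the quotient $(\ZZ/m\ZZ)^*/\{\pm 1\}$ has order $2$ and acts without fixed points (hence transitively) on the two conjugate pairs of primitive characters; thus the Hodge types $(2,0;1,1)$ and $(1,1;2,0)$ describe the same family of tori. Once this is said, your final paragraph is unnecessary: each product of Grassmannians is irreducible, so each Hodge type gives a single family, and the distinct dimensions $2$ and $4$ already separate Type A from Type B.
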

We denote the respective families in ii) by $X_m^2$, $X_m^4$, $m \in \{8,10,12\}$. We also use the notation $X_{10}^2$, $X_{10}^4$ for the families obtained for $m=5$.

			Finally, the third case is analyzed. Again, we have that \ref{assump} holds by Proposition \ref{assump-cor}.
			
			\begin{prop} There are exactly four isomorphism classes of $4$-dimensional complex tori satisfying $\cond_{4,m}$ for each $m \in \{15, 16, 20, 30\}$ and exactly five isomorphism classes when $m = 24$.
			\end{prop}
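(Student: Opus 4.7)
The plan is to reduce the classification to enumerating CM types modulo the Galois action. Since $\fie(m) = 8$ and $r = 1$ in this case, the lattice $\Lam$ is isomorphic to $R_m = \ZZ[\zeta_m]$ as an $R_m$-module. Because this is the rank-one case, Proposition \ref{assump-cor} applies and assumption \ref{assump} holds. Hence the $G$-Hodge decomposition splits $V$ into one-dimensional $\zeta_m$-eigenspaces indexed by a subset $S$ of the set $P_m$ of primitive $m$-th roots of unity, satisfying $|S| = 4$ and $S \sqcup (-S) = P_m$. Such $S$ are classical CM types, and there are exactly $2^4 = 16$ of them, obtained by choosing one element from each of the four complex-conjugation pairs in $P_m$.

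The next step is to identify when two such decompositions give isomorphic complex tori. Any $\ZZ$-linear automorphism of $\Lam$ compatible (up to reparametrisation of the generator of $G$) with the action of $\zeta_m$ is either $R_m$-linear — in which case it is multiplication by a unit of $R_m$ and preserves each eigenspace — or is the composition of such a map with a ring automorphism $\zeta_m \mapsto \zeta_m^a$ for some $a \in G_m := (\ZZ/m\ZZ)^*$. Thus, two CM types $S, S'$ yield isomorphic complex tori equipped with a cyclic order-$m$ linear automorphism iff $S' = aS$ for some $a \in G_m$. Since $-1 \in G_m$, the identification up to complex conjugation (which sends $S$ to $-S$) is automatically part of this Galois action, and the problem reduces to counting $G_m$-orbits on the $16$ CM types.

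The final step is the explicit orbit count for each $m \in \{15, 16, 20, 24, 30\}$: list the four conjugation pairs in $P_m$, tabulate the action of $G_m$ on the CM types, and pick orbit representatives. This is the main obstacle, and although essentially bookkeeping it must be carried out carefully. The deviation at $m = 24$ is structural: one computes $G_{24} \cong (\ZZ/2\ZZ)^3$ is elementary abelian, so it contains no element of order $4$ and orbits of size $8$ split into multiple pieces, producing one extra orbit. By contrast, one checks that each of $G_{15}, G_{16}, G_{20}, G_{30}$ contains an element of order $4$ (e.g.\ $2$ mod $15$, $3$ mod $16$, $3$ mod $20$, $7$ mod $30$), which fuses pairs of CM types and yields exactly four orbits. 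Burnside's counting formula applied to the cyclic subgroup generated by such an element, combined with the remaining $\ZZ/2$-factor, provides a quick cross-check that the totals are indeed $4$ and $5$.
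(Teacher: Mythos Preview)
Your proposal is correct and follows essentially the same route as the paper. Both reduce the problem to counting orbits of CM types (tuples of pairwise non-conjugate primitive $m$-th roots of unity) under the Galois group $(\ZZ/m\ZZ)^*$; the paper's proof simply lists one representative per orbit, while you spell out the CM-type framework more explicitly and add the structural observation that $G_{24}\cong(\ZZ/2\ZZ)^3$ lacks an order-$4$ element, together with a Burnside cross-check. This extra commentary is a nice explanation of \emph{why} $m=24$ behaves differently, but the underlying enumeration is the same as the paper's.
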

			
			\begin{proof}
				We only list one element of each orbit for $m=15,16,20,24$: for $m=15$, the orbits correspond to the $4$-tuples of pairwise non-complex conjugate characters $(1,2,4,7)$, $(1,2,4,8)$, $(1,2,7,11)$, $(1,4,7,13)$. For $m=16$, the orbits correspond to $(1,3,5,9)$, $(1,3,5,7)$, $(1,3,9,11)$, $(1,5,9,13)$, while for $m=20$, they correspond to $(1,3,7,9)$, $(1,3,7,11)$, $(1,3,11,13)$, $(1,9,13,17)$. Finally, for $m=24$, the tuples $(1,5,7,11)$, $(1,5,7,13)$, $(1,5,13,17)$, $(1,7,13,19)$, $(1,11,17,19)$ give the five orbits.
			\end{proof}
			
			We denote representatives of the respective isomorphism classes for $m \in \{15,30\}$ by $X_{30}^{(i)}$, $i \in \{1,2,3,4\}$. For $m \in \{16,20\}$, the representatives are denoted $X_m^{(i)}$, $i \in \{1,2,3,4\}$. Finally, for $m = 24$, the representatives of the five isomorphism classes are denoted $X_{24}^{(j)}$, with $j \in \{1,2,3,4,5\}$. \par \hfill \par 
			
			We now could - in principle - give a list of all $4$-dimensional complex tori admitting a linear automorphism of finite order which have finite fixed locus. This will be omitted, as it is very tedious, yet not very enlightening. In lieu thereof, we will only list those $4$-dimensional complex tori which admit a linear automorphism of order $m$ with eigenvalues of order $m$. 
			
			\begin{theorem}
				The table below contains $4$-dimensional complex tori which admit a linear automorphism $g$ of order $m$, whose eigenvalues are only primitive $m$-th roots of unity. Conversely, every such complex torus belongs (up to complex conjugation) to one of the families listed below. \\
							Here, $\tilde\torus$ denotes an arbitrary $4$-dimensional complex torus. The remaining notation is as introduced above.
				\begin{center}
					\textsc{Table 3} \\
					
					\renewcommand{\arraystretch}{1.25}
					\resizebox{1.1\linewidth}{!}{
						\begin{tabular}{lcr}
							\begin{tabular}{|c|c|c|c|} \hline
								$m$ & $\torus$ & $\Fix(g)$ & $p$ \\ \hline \hline
								$2$ & $\tilde \torus$ & $\cgb2^8$ & $10$ \\ \hline
								$3$ & $E_\rho^4 $& $\cgb3^4$ & $0$ \\ \hline
								$3$ & $E_\rho \times E_\rho \times S_6$ & $\cgb3^4$ & $2$ \\ \hline
								$3$ & $S_6 \times S_6$ & $\cgb3^4$ & $4$ \\ \hline
								$3$ & $E_\rho \times A_6$ & $\cgb3^4$ & $4$ \\ \hline
								$3$ & $X_6^6$ & $\cgb3^4$ & $14$ \\ \hline
								$3$ & $X_6^8$ & $\cgb3^4$ & $14$ \\ \hline
								$4$ & $E_{\ci}^4$ & $\cgb2^4$ & $0$ \\ \hline
								$4$ & $E_{\ci} \times E_{\ci} \times S_4$ & $\cgb2^4$ & $2$ \\ \hline
								$4$ & $S_4 \times S_4$ & $\cgb2^4$ & $4$ \\ \hline
								$4$ & $E_{\ci} \times A_4$ & $\cgb2^4$ & $4$ \\ \hline
								$4$ & $X_4^6$ & $\cgb2^4$ & $14$ \\ \hline
								$4$ & $X_4^8$ & $\cgb2^4$ & $14$ \\ \hline
								$5$ & $S_{10} \times S_{10}$ & $\cgb5^2$ & $0$\\ \hline
							\end{tabular}
							&
							\begin{tabular}{|c|c|c|c|} \hline
								$m$ & $\torus$ & $\Fix(g)$ & $p$ \\ \hline \hline

								$5$ & $X_{10}^2$ & $\cgb5^2$ & $6$\\ \hline
								$5$ & $X_{10}^4$ & $\cgb5^2$ & $6$\\ \hline
								$6$ & $E_\rho^4$ & $\{0\}$ & $0$ \\ \hline
								$6$ & $E_\rho \times E_\rho \times S_6$ & $\{0\}$ & $2$ \\ \hline
								$6$ & $S_6 \times S_6$ & $\{0\}$ & $4$ \\ \hline
								$6$ & $E_\rho \times A_6$ & $\{0\}$ & $4$ \\ \hline
								$6$ & $X_6^6$ & $\{0\}$ & $14$ \\ \hline
								$6$ & $X_6^8$ & $\{0\}$ & $14$ \\ \hline
								$8$ & $S_8' \times S_8'$ & $\cgb2^2$ & $0$ \\ \hline
								$8$ & $S_8' \times S_8''$ & $\cgb2^2$ & $0$ \\ \hline
								$8$ & $S_8'' \times S_8''$ & $\cgb2^2$ & $0$\\ \hline
								$8$ & $X_8^2$ & $\cgb2^2$ & $6$ \\ \hline
								$8$ & $X_8^4$ & $\cgb2^2$ & $6$ \\ \hline
								$10$ & $S_{10} \times S_{10}$ & $\{0\}$ & $0$ \\ \hline
							\end{tabular} 
							&
							\begin{tabular}{|c|c|c|c|} \hline
								$m$ & $\torus$ & $\Fix(g)$ & $p$ \\ \hline \hline
								$10$ & $X_{10}^2$ & $\{0\}$ & $6$ \\ \hline
								$10$ & $X_{10}^4$ & $\{0\}$ & $6$ \\ \hline
								$12$ & $S_{12}' \times S_{12}'$ & $\{0\}$ & $0$ \\ \hline
								$12$ & $S_{12}' \times S_{12}''$ & $\{0\}$ & $0$\\ \hline
								$12$ & $S_{12}'' \times S_{12}''$ & $\{0\}$ & $0$\\ \hline
								$12$ & $X_{12}^2$ & $\{0\}$ & $6$ \\ \hline
								$12$ & $X_{12}^4$ & $\{0\}$ & $6$ \\ \hline
								$15$ & $X_{30}^{(i)}$, $i=1,2,3,4$ & $\{0\}$ & $0$ \\ \hline
								$16$ & $X_{16}^{(i)}$, $i=1,2,3,4$ & $\cg2$ & $0$ \\ \hline
								$20$ & $X_{20}^{(i)}$, $i=1,2,3,4$ & $\{0\}$ & $0$ \\ \hline
								$24$ & $X_{24}^{(i)}$, $i=1,...,5$ & $\{0\}$ & $0$ \\ \hline
								$30$ & $X_{30}^{(i)}$, $i=1,2,3,4$ & $\{0\}$ & $0$ \\ \hline
							\end{tabular}
						\end{tabular}}
					\end{center}
				\end{theorem}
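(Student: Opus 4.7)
The plan is to combine the three case analyses of $\cond_{4,m}$ already set up earlier in this section with the preceding propositions on isomorphism classes (according to whether \ref{assump} holds), and then read off $\Fix(g)$ and the number of moduli $p$ for each resulting family. Since the statement restricts to tori whose eigenvalues are of order exactly $m$, we are in the primary situation $\Lam_2 = \Lam_{2,m} \cong R_m^r$ with $\varphi(m)\,r = 8$, leaving only the three cases $\varphi(m) \in \{2,4,8\}$ listed at the start of Section \ref{4dim}.

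In each case I would first enumerate isomorphism classes. When \ref{assump} holds, $\torus$ splits as a product of primary tori of smaller dimension; the partitions of $4$ (in Cases 1 and 2) combined with the lists of \textsc{Tables 1--2} produce the product entries $E_\rho^4, E_\rho^2 \times S_6, S_6 \times S_6, E_\rho \times A_6$ for $m \in \{3,6\}$, the analogues for $m=4$, and $(S_k')^2, S_k' \times S_k'', (S_k'')^2, S_{10}^2$ for $\varphi(m)=4$. When \ref{assump} fails we obtain the non-product families $X_m^6, X_m^8$ or $X_m^2, X_m^4$ introduced above. Case 3 is handled by Proposition \ref{assump-cor}, which forces \ref{assump} to hold and thereby the finite list $X_m^{(i)}$. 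I would then compute $\Fix(g)$: for $m$ a prime power, Proposition \ref{p-pow} gives $\Fix(g) \cong (\ZZ/p\ZZ)^{2\dim(\torus)/\varphi(m)}$ directly; for composite $m$ I would intersect over prime-power constituents, $\Fix(g) = \bigcap_{p \mid m} \Fix\bigl(g^{m/p^{v_p(m)}}\bigr)$, reducing to the prime-power case on each primary summand. Finally, $p$ is the dimension of the Grassmannian locus of admissible $G$-Hodge decompositions up to complex conjugation: in Case 1 the Hodge types $(3,1)$ and $(2,2)$ give $6 = \dim \Gr(1,4) + \dim \Gr(3,4)$ and $8 = 2\dim \Gr(2,4)$, respectively; Case 2 analogously yields $2$ and $4$; Case 3 is rigid, giving $p=0$; and for product entries one sums the moduli of the factors read off from \textsc{Tables 1--2}.

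The main obstacle I anticipate is the bookkeeping needed to ensure that distinct rows of \textsc{Table 3} represent genuinely distinct families. This amounts to comparing the multiset of isotypical ranks of $\Lam_2$ as an $R_m$-module and, for composite $m$ such as $m=12$, verifying that the non-product families $X_{12}^2, X_{12}^4$ do not specialize further to products already listed; equivalently, one has to check that their generic Hodge decomposition is not realized on a proper $G$-invariant subtorus.
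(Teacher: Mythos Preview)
Your overall strategy---splitting into the three cases $\varphi(m)\in\{2,4,8\}$, invoking the three propositions immediately preceding the theorem, and then reading off $\Fix(g)$ and $p$---is exactly how the paper assembles \textsc{Table 3}; the theorem has no separate proof beyond that compilation. Your computation of $\Fix(g)$ via the prime-power intersection and of $p$ via Grassmannian dimensions is also correct (and your values $p=6,8$ for $X_m^6,X_m^8$ agree with the defining proposition).

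There is, however, a genuine slip in how you account for the product entries $E_\rho^2\times S_6$, $S_6\times S_6$, $E_\rho\times A_6$ (and their $m=4$ analogues). You attribute them to the case where \ref{assump} holds, but this is wrong. For $\varphi(m)=2$ there are only the two characters $\chi,\overline\chi$, and \ref{assump} says precisely that only one of them occurs in $V$, i.e.\ the Hodge type is $(4,0)$; Catanese--Ciliberto then forces $T\cong E_\rho^4$ (resp.\ $E_{\ci}^4$) and nothing else. Summing the Hodge types of the factors ($E_\rho$ contributes $(1,0)$, $S_6$ contributes $(1,1)$, $A_6$ contributes $(2,1)$) shows that $E_\rho^2\times S_6$ and $E_\rho\times A_6$ have Hodge type $(3,1)$, while $S_6\times S_6$ has type $(2,2)$: for all of them \ref{assump} \emph{fails}. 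They are not a separate branch of the dichotomy but distinguished product sub-loci inside the full families $X_6^6$ and $X_6^8$, which the paper simply records in the table alongside the generic families. Your ``partitions of $4$'' heuristic does produce this list of product specializations, but it is orthogonal to \ref{assump}; the correct dichotomy is: \ref{assump} holds $\Rightarrow$ the rigid Catanese--Ciliberto list; \ref{assump} fails $\Rightarrow$ the Hodge-type families $X_m^6,X_m^8$ (resp.\ $X_m^2,X_m^4$), within which one may then single out the product loci. This also dissolves the ``bookkeeping obstacle'' you raise at the end: the rows of \textsc{Table 3} are not required to be pairwise disjoint families.
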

				
				\subsection{The Fivefold Case} \label{5dim}
				Let $T$ satisfy $\cond_{5,m}$.
				We have the following possibilities:
				
				\textit{Case 1:} $\fie(m) = 2$ (hence $m \in \{3,4,6\}$) and $r = 5$, or \\
				\textit{Case 2:} $\fie(m) = 10$ (hence $m \in \{11,22\}$) and $r = 1$. \\
				
				\begin{prop} Let $T$ satisfy $\cond_{5,m}$ such that $\fie(m) = 2$. Then:
					\begin{enumerate}
						\item If \ref{assump} holds, then $\torus$ is isomorphic to a product of elliptic curves.
						\item If \ref{assump} does not hold, then there are exactly two families of such abelian varieties, which have $8$ resp. $12$ parameters. 
					\end{enumerate}
				\end{prop}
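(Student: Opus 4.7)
The plan is to mimic the arguments used for the three- and four-dimensional cases, since the setup is entirely analogous once the constraint $\fie(m)=2$ and $\dim(T)=5$ are combined.

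First, since $\fie(m)=2$ and $h(R_m)=1$ for $m\in\{3,4,6\}$, equation \ref{eqrank} gives $\Lam\cong R_m^5$ and consequently the isotypical decomposition $\Lam\otimes_\ZZ\CC = W_\chi\oplus W_{\overline\chi}$, where each $W_\chi, W_{\overline\chi}$ has $\CC$-dimension $5$. Writing the $G$-Hodge decomposition as $V = V_\chi \oplus V_{\overline\chi}$ with $V_\chi\subset W_\chi$ and $V_{\overline\chi}\subset W_{\overline\chi}$, I set $a := \dim_\CC V_\chi$ and $b := \dim_\CC V_{\overline\chi}$, so that $a+b=5$. Up to complex conjugation I may assume $a\leq b$.

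For part (i), if \ref{assump} holds, one of $a,b$ vanishes, so (up to conjugation) $V=V_\chi=W_\chi$. Then the generator $g$ of $G$ acts on $V$ as the scalar $\chi(g)$, i.e., the complex structure on $\Lam\otimes_\ZZ\RR$ is $R_m$-linear. Decomposing the free $R_m$-module $\Lam$ as $\bigoplus_{i=1}^5 R_m\cdot e_i$ yields a splitting $T\cong\prod_{i=1}^5 E_i$ where each $E_i$ is an elliptic curve with complex multiplication by $R_m$; this forces $E_i=E_\rho$ when $m\in\{3,6\}$ and $E_i=E_\ci$ when $m=4$.

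For part (ii), the failure of \ref{assump} amounts to $a,b\geq 1$, leaving only the Hodge types $(a,b)\in\{(1,4),(2,3)\}$. As explained in the Remark following the definition of Hodge type, the set of admissible $G$-Hodge decompositions of a fixed type is an open subset of $\Gr(a,5)\times\Gr(b,5)$ (the open condition being $V_\chi\cap\overline{V_{\overline\chi}}=\{0\}$ and the corresponding one on the other summand), so one obtains families of dimension
\begin{align*}
a(5-a)+b(5-b) = \begin{cases} 4+4=8 & \text{if } (a,b)=(1,4),\\ 6+6=12 & \text{if } (a,b)=(2,3).\end{cases}
\end{align*}
The mild point to verify is that the open subset is non-empty (otherwise one of the two families would be spurious); this follows exactly as in Proposition \ref{conj-case-dim2} and Proposition \ref{conj-case-dim3}, by exhibiting a single transverse pair of subspaces in each $\Gr(a,5)\times\Gr(b,5)$. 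I do not expect serious obstacles; the only bookkeeping item is checking that the two Hodge types give genuinely distinct families (not identified under complex conjugation), which is clear because $(a,b)\mapsto(b,a)$ preserves each type.
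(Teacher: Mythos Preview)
Your proposal is correct and follows exactly the approach the paper intends: the paper omits the proof here, having already noted that the arguments are identical to those in Propositions \ref{conj-case-dim2} and \ref{conj-case-dim3}, and you have faithfully carried out that analogy in dimension $5$. The parameter counts $a(5-a)+b(5-b)=8,12$ for $(a,b)=(1,4),(2,3)$ are precisely what the paper's Grassmannian parametrization yields.
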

				
				The analysis of the second case yields
				
				\begin{prop} There are exactly four isomorphism classes of tori satisfying $\cond_{5,m}$ for $m \in \{11, 22\}$. 
				\end{prop}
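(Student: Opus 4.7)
The plan is to reduce the classification to an orbit count for the natural action of $(\ZZ/m\ZZ)^*$ on sections of a two-to-one projection, and then exploit the cyclic structure of this group.

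First I would invoke inequality \ref{ineq} with $n=5$ and $\fie(m)=10$ to conclude $r=1$, so $\Lam \cong R_m \cong \ZZ[\zeta_m]$. Then $\Lam \otimes_\ZZ \CC$ decomposes into ten one-dimensional eigenspaces $W_\chi$ indexed by the primitive $m$-th roots of unity, and Proposition \ref{assump-cor} ensures that assumption \ref{assump} holds automatically. Hence a valid Hodge decomposition amounts to choosing one eigenspace out of each complex-conjugate pair $\{W_\chi, W_{\overline\chi}\}$, i.e., a section of the projection $(\ZZ/m\ZZ)^* \twoheadrightarrow (\ZZ/m\ZZ)^*/\{\pm 1\}$. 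There are exactly $2^5 = 32$ such sections. Two sections yield isomorphic tori (up to complex conjugation) precisely when related by the action of $\Gamma := (\ZZ/m\ZZ)^*$ coming from the freedom to replace $g$ by a different generator $g^j$ of $G$; complex conjugation is the case $j=-1$, already in $\Gamma$, and the action of $R_m^*$ on sections is trivial since any unit preserves each $W_\chi$ individually.

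To count the $\Gamma$-orbits I would exploit that $\Gamma$ is cyclic of order $10$ and that $\Gamma/\{\pm 1\}$ has odd order $5$. Since $-1 \in \Gamma$ sends any section to its complement, it fixes no section; therefore every stabilizer is contained in the unique index-$2$ subgroup $\Gamma^2$ of squares, and every orbit has size $10$ or $2$. The subset $\Gamma^2 \subset \Gamma$ itself provides a $\Gamma^2$-invariant section, and it is genuinely a section because $-1$ is not a square (as $|\Gamma| = 10 \equiv 2 \pmod 4$), so $\Gamma^2$ meets each pair $\{k,-k\}$ in exactly one element. This yields one orbit of size $2$, namely $\{\Gamma^2, -\Gamma^2\}$, leaving the remaining $30$ sections to split into $30/10 = 3$ orbits of size $10$, for a total of $4$. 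The argument is uniform in $m \in \{11,22\}$, since $R_{11} \cong R_{22}$ as rings and $(\ZZ/m\ZZ)^*$ is cyclic of order $10$ in both cases.

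The main conceptual subtlety, and the point I would verify most carefully, is that isomorphism classes of tori really do biject with $\Gamma$-orbits on sections and not with some finer or coarser equivalence. This is settled by the triviality of the $R_m^*$-action on sections together with the observation that any isomorphism of complex tori intertwining the order-$m$ automorphisms on both sides must correspond to a Galois automorphism of $R_m$, i.e., to multiplication by some $j \in \Gamma$. As an independent check, Burnside's formula applied to the ten powers of a generator of $\Gamma$ (whose fixed-point counts are straightforward to compute and sum to $40$) confirms $40/10 = 4$ orbits.
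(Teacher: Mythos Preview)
Your proof is correct and arrives at the same answer, but the execution differs from the paper's. The paper simply enumerates orbit representatives: it reduces to $m=11$ and lists the four $(\ZZ/11\ZZ)^*$-orbits of $5$-tuples of pairwise non-conjugate primitive characters as $(1,2,3,4,5)$, $(1,2,3,4,6)$, $(1,2,3,5,7)$, $(1,3,4,5,9)$, with no further argument. You instead give a structural orbit count exploiting that $\Gamma = (\ZZ/m\ZZ)^*$ is cyclic of order $10$ with $-1$ a non-square, deducing that stabilizers are either trivial or all of $\Gamma^2$, and that exactly the two cosets $\Gamma^2$, $-\Gamma^2$ are $\Gamma^2$-invariant sections. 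This recovers the paper's exceptional orbit $(1,3,4,5,9) = \Gamma^2$ conceptually rather than by inspection, and your method would generalize immediately to other primes $p \equiv 3 \pmod 4$. The paper's enumeration, on the other hand, gives explicit representatives, which is useful for the tables and polarization computations later on.

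One small point to tighten: when you say ``leaving the remaining $30$ sections to split into $30/10 = 3$ orbits of size $10$'', you are implicitly using that $\Gamma^2$ and $-\Gamma^2$ are the \emph{only} $\Gamma^2$-invariant sections. This follows because a $\Gamma^2$-invariant subset of $\Gamma$ is a union of $\Gamma^2$-cosets, and there are only two such cosets, each of size $5$; it is worth making this one sentence explicit. Your Burnside check at the end is a nice independent confirmation.
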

				
				\begin{proof}
					It suffices to deal with the case $m=11$. The four orbits correspond to the $5$-tuples of pairwise non-complex conjugate characters $(1,2,3,4,5)$, $(1,2,3,4,6)$, $(1,2,3,5,7)$, $(1,3,4,5,9)$.
				\end{proof}
				
				Writing a table containing all families of $5$-dimensional complex tori which admit a linear automorphism of order $m$ acting faithfully is omitted, as the table would need to much space. 
				
				\renewcommand{\arraystretch}{1}
				\section{Classification of BdF-Varieties in Small Dimensions}
				In this chapter, we list all candidates for split BdF-varieties of dimensions $\leq 4$. It suffices to give a list of $m$, $B_1$, $B_2$ and $T_{r,i}$ with our desired properties (cf. Theorem \ref{charac}). As we saw, $B_2$ has to admit an automorphism of order $m$: all possibilities for $B_2$ up to dimension $3$ with corresponding $m$ were listed in the last section. An arbitrary abelian variety of a suitable dimension can be chosen for $B_1$. To compute all possibilities for $T$, we use the combinatorical restrictions, namely \cite[Corollary 1.7, Proposition 1.8]{BGL} (note that part (c) of 1.8 in \textit{loc. cit.} equals Proposition \ref{p-pow} in this paper), Remarks \ref{torsionpoints} and \ref{el-of-order-m}. We abbreviate $\ZZ_m := \ZZ/m\ZZ$. The notation is explained before the tables.
				
				\begin{rem} At this point, it is not clear that all the BdF-manifolds listed in the tables below are really projective, at least in dimension $> 2$. We will deal with this question in the next chapter.
				\end{rem} \par \hfill
				
				We start out with the surface case. We list all BdF-surfaces of product type together will all possibilities of $T_{r,i}$. 
				
				
				
				\begin{theorem}
					There are exactly seven families of BdF-surfaces. They are listed in \textsc{Table 4} below.
					\begin{center}
						\textsc{Table 4} \par \hfill \par
						\small
						\resizebox{\linewidth}{!}{
						\begin{tabular}{lr} 
						\begin{tabular}{|c|c|c|c|c|} \hline
							$m$ & $B_1$ & $B_2$ & Possibilities for $T_{r,i}$ & $p$ \\ \hline \hline
							$2$ & $E$ & $E'$ & $\{0\}, \ZZ_2$ & $2$ \\ \hline
							$3$ & $E$ & $E_\rho$ & $\{0\}, \ZZ_3$ & $1$ \\ \hline
						\end{tabular}
						&
						\begin{tabular}{|c|c|c|c|c|} \hline
						$m$ & $B_1$ & $B_2$ & Possibilities for $T_{r,i}$ & $p$ \\ \hline \hline
						$4$ & $E$ & $E_{\ci}$ & $\{0\}, \ZZ_2$ & $1$\\ \hline
						$6$ & $E$ & $E_\rho$ & $\{0\}$ & $1$ \\ \hline
						\end{tabular}
						\end{tabular}}
						\normalsize
					\end{center}
					Here, $p$ denotes the number of moduli.
				\end{theorem}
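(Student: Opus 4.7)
The plan is to apply Theorem \ref{charac}, which reduces the classification to enumerating all data $(B_1, B_2, G, T_r)$ satisfying conditions (a)--(c) of that theorem, up to the natural notion of isomorphism. Since $\dim X = 2$ and both $B_1$ (carrying a translation by an element $b'$ of order $m$) and $B_2$ (carrying a linear automorphism $\alpha'$ with no eigenvalue $1$) must be positive-dimensional, both factors are elliptic curves; write $B_1 = E$ and $B_2 = E'$.

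Next I would determine the possible orders $m$ and the corresponding complex structure on $E'$. Since $\dim B_2 = 1$, the automorphism $\alpha'$ acts as multiplication by a primitive $m$-th root of unity on the universal cover, and the existence of such a lattice automorphism on an elliptic curve forces $\fie(m) \leq 2$, hence $m \in \{2, 3, 4, 6\}$. The complex structure of $E'$ is accordingly pinned down: $E'$ is arbitrary for $m = 2$, isomorphic to $E_\rho$ for $m \in \{3, 6\}$ (the only elliptic curve with an order-$3$ automorphism), and isomorphic to $E_{\ci}$ for $m = 4$. In every case $B_1 = E$ is an arbitrary elliptic curve, which fills in the first three columns of the table.

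To enumerate the admissible translation subgroups I would compute $\Fix(\alpha')$, since condition (b) reads $T_{r,2} \subseteq \Fix(\alpha')$. For $m = 2$ we have $\alpha' = -\id$ and hence $\Fix(\alpha') = E'[2] \cong (\ZZ/2\ZZ)^2$; for $m = 3, 4$ Proposition \ref{p-pow} yields $\Fix(\alpha') \cong \ZZ/p\ZZ$ with $p = 3, 2$ respectively; and for $m = 6$, where Proposition \ref{p-pow} does not apply directly, the elementary computation $\det(\alpha' - \id) = \phi_6(1) = 1$ shows that $\alpha' - \id$ is an automorphism of the rank-$2$ lattice, whence $\Fix(\alpha') = \{0\}$. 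Condition (a) then forces $T_{r,1} \cong T_{r,2}$, while condition (c) requires $T_{r,1} \cap \langle b' \rangle = \{0\}$: this last constraint is the most delicate point, since for $m = 2$ it rules out $T_{r,i} = (\ZZ/2\ZZ)^2$ (the order-$2$ element $b'$ necessarily lies in $E[2]$), leaving the tabulated $T_{r,i} \in \{0, \ZZ/2\ZZ\}$; in the remaining cases one verifies that $\langle b' \rangle$ can be avoided inside $E[p]$ by a generic choice of $b'$. Assembling the four subcases gives $2 + 2 + 2 + 1 = 7$ families, and counting moduli (one for $E$ throughout, plus one additional for $E'$ when $m = 2$) reproduces the $p$ column of \textsc{Table 4}. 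The main obstacle is the careful bookkeeping in condition (c) for $m = 2$, together with the ad hoc $m = 6$ computation of $\Fix(\alpha')$ which is not covered by Proposition \ref{p-pow}.
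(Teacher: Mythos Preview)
Your proposal is correct and follows essentially the same approach as the paper: the paper does not spell out a separate proof for this theorem but derives it from the general method announced at the start of the chapter (apply Theorem \ref{charac}, read off the admissible $B_2$ from the elliptic-curve classification, then constrain $T_{r,i}$ via $\Fix(\alpha')$ using Proposition \ref{p-pow} and the combinatorics of Remarks \ref{torsionpoints}--\ref{el-of-order-m}). Your ad hoc computation $\det(\alpha'-\id)=\phi_6(1)=1$ for $m=6$ is exactly what the cited results in \cite{BGL} specialize to, and your handling of condition (c) for $m=2$ (ruling out $T_{r,i}\cong\ZZ_2^2$) is the only nontrivial bookkeeping step, which you treat correctly.
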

				
				This is the classification result of Bagnera-de Franchis \cite{BdF} and Enriques-Severi \cite{Enr-Sev}.
				
				
				\begin{theorem}
					Every threedimensional BdF-variety belongs to a family which is listed in \textsc{Table 5} below.
					\begin{center}
						\textsc{Table 5} \\
						\small
						\resizebox{1.1\linewidth}{!}{
						\begin{tabular}{lr}
						\begin{tabular}{|cH|c|c|} \hline
							$m$ & $B_1$ & $B_2$ & Possibilities for $T_{r,i}$ \\ \hline \hline
							$2$ & $S$ & $E$ & $\{0\}, \ZZ_2, \ZZ_2^2$ \\ \hline
							$2$ & $E$ & $S$ & $\{0\}, \ZZ_2$ \\ \hline
							$3$ & $S$ & $E_\rho$ & $\{0\}, \ZZ_3$ \\ \hline
							$3$ & $E$ & $S_6$ & $\{0\}, \ZZ_3$ \\ \hline
							$3$ & $E$ & $E_\rho \times E_\rho$ & $\{0\}, \ZZ_3$ \\ \hline
							$4$ & $S$ & $E_{\ci}$ & $\{0\}, \ZZ_2$ \\ \hline
							$4$ & $E$ & $S_4$ & $\{0\}, \ZZ_2, \ZZ_2^2$ \\ \hline
							$4$ & $E$ & $E_{\ci} \times E_{\ci}$ & $\{0\}, \ZZ_2, \ZZ_2^2$ \\ \hline
							$4$ & $E$ & $E' \times E_{\ci}$ & $\{0\}, \ZZ_2, \ZZ_2^2$ \\ \hline
							$5$ & $E$ & $S_{10}$ & $\{0\}, \ZZ_5$ \\ \hline
						\end{tabular} &
						\begin{tabular}{|cH|c|c|} \hline
							$m$ & $B_1$ & $B_2$ & Possibilities for $T_{r,i}$ \\ \hline \hline
							$6$ & $S$ & $E_\rho$ & $\{0\}$ \\ \hline
							$6$ & $E$ & $S_6$ & $\{0\}$ \\ \hline
							$6$ & $E$ & $E' \times E_\rho$ & $\{0\}, \ZZ_2, \ZZ_3, \ZZ_2^2, \ZZ_6, \ZZ_2^2 \times \ZZ_3$ \\ \hline
			     			$6$ & $E$ & $E_\rho \times E_\rho$ & $\{0\}, \ZZ_3$ \\ \hline
							$8$ & $E$ & $S_8'$ & $\{0\}, \ZZ_2$ \\ \hline
							$8$ & $E$ & $S_8''$ & $\{0\}, \ZZ_2$ \\ \hline
							$10$ & $E$ & $S_{10}$ & $\{0\}$ \\ \hline
							$12$ & $E$ & $S_{12}'$ & $\{0\}$ \\ \hline
							$12$ & $E$ & $S_{12}''$ & $\{0\}$ \\ \hline
							$12$ & $E$ & $E_{\ci} \times E_\rho$ & $\{0\}, \ZZ_2, \ZZ_3, \ZZ_6$ \\ \hline
						\end{tabular}
							\end{tabular}}
					\end{center}
				\end{theorem}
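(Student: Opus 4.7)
The plan is to apply Theorem \ref{charac} systematically to every possible primary decomposition of $B_2$. By that theorem, every split three-dimensional BdF-variety is of the form $(B_1 \times B_2)/(G \times T_r)$, where $G = \langle g \rangle$ is cyclic of order $m$, $g$ acts on $B_1$ by translation by a point $b'$ of order $m$, and on $B_2$ by a linear automorphism $\alpha'$ of order $m$ with no eigenvalue $1$. In particular $\dim B_1 + \dim B_2 = 3$ with $\dim B_1 \geq 1$, and $T_r$ is the graph of an isomorphism $T_{r,1} \to T_{r,2}$ of finite translation subgroups subject to conditions (b) and (c) of that theorem.

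First I would list, for each order $m$, every possible split decomposition $B_2 = \bigoplus_{k \mid m,\, k>1} B_{2,k}$ with $\dim B_2 \in \{1,2\}$, where each $B_{2,k}$ admits a linear automorphism having only primitive $k$-th roots of unity as eigenvalues. These candidates are read off from Tables 1 and 2 by restricting to primary summands: in dimension one the options are $E$ (for $k=2$), $E_\rho$ (for $k\in\{3,6\}$), and $E_{\ci}$ (for $k=4$); in dimension two one obtains the primary families $S_4, S_6, S_{10}, S_8', S_8'', S_{12}', S_{12}''$ together with the products $E_\rho \times E_\rho$ and $E_{\ci} \times E_{\ci}$. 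The inequality $\fie(k) \leq 2\dim B_{2,k}$ from \ref{ineq} prunes the possibilities. The factor $B_1$ is then a generic abelian variety of the complementary dimension.

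Next, for each candidate triple $(m, B_1, B_2)$, I would enumerate the possible translation groups $T_r$. By condition (b) in Theorem \ref{charac}, $T_{r,2} \subseteq \Fix(\alpha')$, whose structure is computed factor by factor using Proposition \ref{p-pow} and Remark \ref{torsionpoints}; these fixed loci are tabulated in the respective columns of Tables 1 and 2. Condition (a) forces $T_{r,1}$ to be an abstractly isomorphic subgroup of $B_1$, and since $B_1[N] \cong (\ZZ/N\ZZ)^{2\dim B_1}$ by Remark \ref{torsionpoints}, every such abstract group embeds into $B_1$. Condition (c), $\langle b' \rangle \cap T_{r,1} = \{0\}$, must then be verified directly; it is automatic whenever $|T_{r,2}|$ is coprime to $m$, but restricts the admissible subgroups when $\gcd(|T_{r,2}|,m) > 1$.

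The main obstacle is this final check in the mixed-torsion situations. The most delicate row is $m = 6$, $B_2 = E' \times E_\rho$ with $\Fix(\alpha') = \ZZ_2^2 \times \ZZ_3$: one enumerates the subgroups of $\ZZ_2^2 \times \ZZ_3$ and, for each, decides whether a compatibly isomorphic $T_{r,1} \subset E = B_1$ can be chosen avoiding $\langle b' \rangle \subset E[6]$. A direct counting argument using $E[N] \cong (\ZZ/N\ZZ)^2$, together with a generic choice of $b'$ within $E[6]$, shows that exactly the six possibilities listed in Table 5 survive, and analogous (but easier) countings handle the remaining rows. Throughout, one must check that the action of $G \times T_r$ remains free after passing to the quotient, which by Remark \ref{trans} reduces to the non-intersection condition (c) already verified.
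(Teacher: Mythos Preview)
Your approach is essentially the same as the paper's. The paper gives no detailed proof of this theorem; it only sketches the method in the paragraph opening Chapter~4: invoke Theorem~\ref{charac}, read off the candidates for $B_2$ from the classification of primary complex tori in the preceding sections, take $B_1$ arbitrary of the complementary dimension, and determine the admissible $T_{r,i}$ via conditions (a)--(c) together with the fixed-locus computations (Proposition~\ref{p-pow}, Remarks~\ref{torsionpoints} and~\ref{el-of-order-m}, and \cite[1.7--1.8]{BGL}). That is precisely your plan. One minor slip: since $\dim B_2 \le 2$, the relevant inputs are the elliptic-curve list and \textsc{Table~1} (surfaces), not \textsc{Table~2} (threefolds).
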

				\normalsize
				
				\begin{theorem}
					Every fourdimensional BdF-variety belongs to a family which is listed in \textsc{Table 6} below.
					\begin{center}
						\textsc{Table 6} \\
						\small
						\resizebox{1.1\linewidth}{!}{
						\begin{tabular}{llrr}
							\begin{tabular}{|cH|c|} \hline
								$m$ & $B_1$ & $B_2$ \\ \hline \hline
								$2$ & $X$ & $E$ \\ \hline
								$2$ & $S$ & $S'$ \\ \hline
								$2$ & $E$ & $X$ \\ \hline
								$3$ & $X$ & $E_\rho$ \\ \hline
								$3$ & $S$ & $S_6$ \\ \hline
								$3$ & $S$ & $E_\rho \times E_\rho$ \\ \hline
								$3$ & $E$ & $A_6$ \\ \hline
								$3$ & $E$ & $E_\rho \times S_6$\\ \hline
								$3$ & $E$ & $E_\rho \times E_\rho \times E_\rho$ \\ \hline
								$4$ & $X$ & $E_{\ci}$\\ \hline
								$4$ & $S$ & $S_4$ \\ \hline
								$4$ & $S$ & $E_{\ci} \times E_{\ci}$\\ \hline
								$4$ & $S$ & $E \times E_{\ci}$ \\ \hline
								$4$ & $E$ & $A_4$ \\ \hline 
								$4$ & $E$ & $E_{\ci} \times S$ \\ \hline 
								
							\end{tabular} &
							\begin{tabular}{|cH|c|} \hline
								$m$ & $B_1$ & $B_2$ \\ \hline \hline
								$4$ & $E$ & $E_{\ci} \times S_4$ \\ \hline 
								$4$ & $E$ & $E' \times S_4$ \\ \hline 
								$4$ & $E$ & $E' \times E_{\ci} \times E_{\ci}$ \\ \hline 
								$4$ & $E$ & $E_{\ci} \times E_{\ci} \times E_{\ci}$ \\ \hline
								$5$ & $S$ & $S_{10}$\\ \hline
								$6$ & $X$ & $E_\rho$ \\ \hline
								$6$ & $S$ & $S_6$\\ \hline
								$6$ & $S$ & $E \times E_\rho$ \\ \hline
								$6$ & $S$ & $E_\rho \times E_\rho$ \\ \hline
								$6$ & $E$ & $A_6$ \\ \hline
								$6$ & $E$ & $E' \times S_6$ \\ \hline
								$6$ & $E$ & $E_\rho \times S_6$ \\ \hline
								$6$ & $E$ & $E_\rho \times S$ \\ \hline
								$6$ & $E$ & $E' \times E_\rho \times E_\rho$ \\ \hline
								$6$ & $E$ & $E_\rho \times E_\rho \times E_\rho$ \\ \hline
							\end{tabular}
							&
							\begin{tabular}{|cH|c|} \hline
								$m$ & $B_1$ & $B_2$ \\ \hline \hline
								$7$ & $E$ & $A_7'$\\ \hline
								$7$ & $E$ & $A_7''$ \\ \hline
								$8$ & $S$ & $S_8'$ \\ \hline
								$8$ & $S$ & $S_8''$ \\ \hline
								$8$ & $E$ & $E' \times S_8'$ \\ \hline
								$8$ & $E$ & $E' \times S_8''$ \\ \hline
								$8$ & $E$ & $E_{\ci} \times S_8'$ \\ \hline
								$8$ & $E$ & $E_{\ci} \times S_8''$ \\ \hline
								$9$ & $E$ & $A_9'$ \\ \hline
								$9$ & $E$ & $A_9''$ \\ \hline
								$10$ & $S$ & $S_{10}$ \\ \hline
								$10$ & $E$ & $E' \times S_{10}$ \\ \hline
								$12$ & $S$ & $S_{12}'$ \\ \hline
								$12$ & $S$ & $S_{12}''$ \\ \hline
								$12$ & $S$ & $E_{\ci} \times E_\rho$ \\ \hline
							\end{tabular}
								&
								\begin{tabular}{|cH|c|} \hline
									$m$ & $B_1$ & $B_2$ \\ \hline \hline
								$12$ & $E$ & $E_\rho \times S_4$  \\ \hline
								$12$ & $E$ & $E_{\ci} \times S_6$ \\ \hline 
								$12$ & $E$ & $E' \times S_{12}'$ \\ \hline
								$12$ & $E$ & $E' \times S_{12}''$ \\ \hline
								$12$ & $E$ & $E_\rho \times S_{12}'$ \\ \hline
								$12$ & $E$ & $E_\rho \times S_{12}''$ \\ \hline
								$12$ & $E$ & $E_{\ci} \times S_{12}'$ \\ \hline
								$12$ & $E$ & $E_{\ci} \times S_{12}''$ \\ \hline
								$12$ & $E$ & $E' \times E_\rho \times E_{\ci}$ \\ \hline
								$12$ & $E$ & $E_\rho \times E_{\ci} \times E_{\ci}$ \\ \hline
								$12$ & $E$ & $E_\rho  \times E_\rho \times E_{\ci}$ \\ \hline
								$14$ & $E$ & $A_7'$ \\ \hline
								$14$ & $E$ & $A_7''$ \\ \hline
								$18$ & $E$ & $A_9'$ \\ \hline
								$18$ & $E$ & $A_9''$ \\ \hline
							\end{tabular}
						\end{tabular}}
					\end{center}
				\end{theorem}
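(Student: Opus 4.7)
The plan is to reduce the classification to an enumeration problem using the product-type characterization (Theorem \ref{charac}) and then carry out a case analysis according to the dimensions of $B_1$ and $B_2$. By Theorem \ref{charac}, every $4$-dimensional BdF-variety is of the form $(B_1 \times B_2)/(G \times T_r)$, where $G \cong \ZZ/m\ZZ$ acts on $B_1$ by translation and linearly on $B_2$. By Remark \ref{trans} and the definition of product type, a generator $g$ of $G$ has $\al' \in \Aut(B_2,0)$ without eigenvalue $1$, so it suffices to list all admissible triples $(m, B_1, B_2)$ with $\dim B_1 + \dim B_2 = 4$, $\dim B_1 \geq 1$, and $\dim B_2 \geq 1$.

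First, I would split on the dimension distribution $(\dim B_1, \dim B_2) \in \{(3,1),(2,2),(1,3)\}$. In each case $B_1$ is an arbitrary abelian variety of the appropriate dimension (denoted $E$, $S$, or $X$), while $B_2$ must admit a linear automorphism of order $m$ with no eigenvalue $1$. Since $X$ is assumed split, the underlying lattice $\Lam_2$ is a module over $\bigoplus_{k|m}R_k$, giving a $G$-invariant decomposition $B_2 = \bigoplus_{k|m,\,k>1}B_{2,k}$ into primary pieces, where $B_{2,k}$ is a primary complex torus for the cyclic group acting with only primitive $k$-th roots of unity as eigenvalues. Consequently, listing the possible $B_2$ reduces to listing all ways to write $\dim(B_2) \in \{1,2,3\}$ as a sum $\sum_{k|m,\,k>1}\dim(B_{2,k})$ with $\tfrac{1}{2}\fie(k)\mid \dim(B_{2,k})$, and then choosing one of the primary families catalogued in \textsc{Tables 1 and 2} for each summand.

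Next, for $(3,1)$ the piece $B_2$ must be an elliptic curve admitting an order-$m$ automorphism; the standard table of elliptic curves with automorphisms gives $m \in \{2,3,4,6\}$ with $B_2 \in \{E', E_\rho, E_{\ci}, E_\rho\}$ respectively. For $(2,2)$ I would run through the divisors $k>1$ of each candidate $m$: if $B_2$ is primary, it is one of the families in \textsc{Table 1} with parameter $m$; otherwise $B_2$ splits as a product of two elliptic curves, each a primary piece for some $k_i \mid m$ with $k_i > 1$, and I use the elliptic-curve table to enumerate factors. The analysis for $(1,3)$ is analogous but longer, drawing on \textsc{Table 2}: $B_2$ is either primary of dimension $3$ (yielding the entries involving $A_6, A_4, A_6, A_7', A_7'', A_9', A_9''$), a product of a primary surface with an elliptic curve (giving entries like $E_\rho \times S_6$, $E_{\ci} \times S_4$, $E_\rho \times S_{12}'$, etc.), or a product of three elliptic curves with pairwise eigenvalue conditions compatible with $m$.

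The main obstacle will be the bookkeeping for composite values of $m$, especially $m=12$, where the divisors $\{2,3,4,6,12\}$ allow many a priori combinations of primary blocks in $B_2$. In each such case one must (i) verify that the combination actually yields an order-$m$ automorphism on the full $B_2$ (i.e.~$\mathrm{lcm}$ of the orders on the summands equals $m$) rather than a proper divisor, and (ii) eliminate duplicates up to complex conjugation (e.g.~swapping a family with its conjugate, or reordering factors in $B_1 \times B_2$). Once this case-by-case enumeration is complete, the resulting list is exactly the content of \textsc{Table 6}, completing the proof.
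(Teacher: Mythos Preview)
Your proposal is correct and follows essentially the same approach as the paper: the paper's argument for \textsc{Table 6} consists precisely of invoking Theorem \ref{charac}, noting that $B_1$ is an arbitrary abelian variety of the complementary dimension while $B_2$ must admit an order-$m$ linear automorphism without eigenvalue $1$, and then reading off the admissible $(m,B_2)$ from the lower-dimensional primary classifications (\textsc{Tables 1--2}) via the splitting $B_2 = \bigoplus_{k\mid m,\,k>1} B_{2,k}$. Your case split on $(\dim B_1,\dim B_2)$ and the bookkeeping you outline (checking $\mathrm{lcm}$ conditions for composite $m$, removing duplicates up to complex conjugation) are exactly the enumeration the paper performs implicitly; the paper simply records the outcome without spelling out the case analysis in the text.
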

				We leave it to the interested reader to write down all the possibilities for $T_{r,i}$ in \textsc{Table 6}.
				\normalsize
				\begin{rem} One could, in theory, write tables with all candidates for BdF-varieties up to dimension $11$ using the presented method; in fact, the first case where $\Lam_2$ is not a free $R_m$-module is $m=23$, meaning $\dim(B_2) = 11$.
				\end{rem}

				\section{Projectivity of BdF-manifolds}
				
				In the last section, we listed all candidates for BdF-varieties. However, the answer to the following question still remains.
				
				\begin{question} When do the families of BdF-manifolds listed in the tables above contain BdF-varieties?
				\end{question}
				
				In the sequel, we will prove that each family of BdF-manifolds contains a projective member using different methods. The first method is a theorem of T. Ekedahl, which briefly can be stated as 'rigid group actions on tori are projective', and whose proof was sketched to F. Catanese at an Oberwolfach conference. In the subsequent section, we find explicit forms for the polarization. These methods overlap somehow - nevertheless we think that it is of interest to present both methods.
				
				\subsection{Rigid Group Actions on Tori}
				
				 We state a result by Ekedahl; a detailed proof of it can be found in the preprint \cite{Demleitner} by F. Catanese and the author.
				
				\begin{theorem}[Ekedahl] \label{ekedahl}
				Let $(\torus,G)$ be a rigid group action of a finite group $G$ on a complex torus $\torus$. Then $\torus$ (or, equivalently, $\torus/G$) is projective.
				\end{theorem}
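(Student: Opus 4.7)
The plan is threefold: first, translate the rigidity hypothesis into a precise structural condition on the $G$-representation $V$; second, observe that this condition forces the first Riemann bilinear relation to hold automatically for every $G$-invariant alternating form on $\Lam$; and third, produce an integral $G$-invariant polarization via averaging and a density argument.

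For the first step I would use that the $G$-equivariant infinitesimal deformations of the Hodge structure $\Lam \otimes_\ZZ \CC = V \oplus \overline{V}$ are parametrized by $\Hom(V, \overline{V})^G$. Decomposing $V = \bigoplus_\chi V_\chi$ into $G$-eigenspaces gives $\overline{V} = \bigoplus_\chi \overline{V_{\overline{\chi}}}$, where the summand $\overline{V_{\overline{\chi}}}$ carries character $\chi$; hence $\Hom(V, \overline{V})^G = \bigoplus_\chi \Hom_\CC(V_\chi, \overline{V_{\overline{\chi}}})$. Rigidity thus means that for every character $\chi$ of $G$, either $V_\chi = 0$ or $V_{\overline{\chi}} = 0$. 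In particular $G$ acts on $V$ without fixed vectors, no real character appears in $V$, and for every complex $\chi$ the characters $\chi$ and $\overline{\chi}$ do not simultaneously appear in $V$.

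For the second step I would decompose any $G$-invariant form $E \in \wedge^2 \Lam^*$, extended $\CC$-linearly, as $E = E_{2,0} + E_{1,1} + E_{0,2}$ according to the Hodge decomposition. A non-zero $G$-invariant $E_{2,0} \in \wedge^2 V^*$ must pair some $V_\chi^*$ with $V_{\overline{\chi}}^*$, which by the first step is impossible; symmetrically $E_{0,2} = 0$. Thus every such $E$ has Hodge type $(1,1)$, i.e., satisfies the first Riemann bilinear relation. For the third step I would pick any positive definite Hermitian form on $V$, average over $G$ to obtain a $G$-invariant positive definite Hermitian form $H$, and transport $\mathrm{Im}(H)$ through the $\RR$-linear isomorphism $\Lam \otimes_\ZZ \RR \xrightarrow{\sim} V$ induced by the Hodge decomposition. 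This yields a $G$-invariant real alternating form on $\Lam \otimes_\ZZ \RR$ whose associated Hermitian form is $H$, hence positive definite. Positive definiteness is an open condition on $(\wedge^2 \Lam_\QQ^*)^G \otimes_\QQ \RR$, so by density of its $\QQ$-points there is a $G$-invariant rational alternating form enjoying it; clearing denominators then produces an integral $G$-invariant polarization on $\torus$.

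The main obstacle lies in Step 1: the identification of $G$-equivariant deformations with $\Hom(V, \overline{V})^G$, and the translation of its vanishing into the combinatorial character condition stated above, requires some care with the different interpretations of rigidity (first-order, formal, analytic) and with how complex conjugation reindexes $G$-eigenspaces between $V$ and $\overline{V}$. Once this dictionary is in hand, Steps 2 and 3 are essentially formal verifications.
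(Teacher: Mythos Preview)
Your argument is correct and constitutes a complete proof. Note, however, that the paper does not itself prove this theorem: it merely states the result and refers to the preprint \cite{Demleitner} for a detailed proof. The paper does record, immediately after the statement, that rigidity of $(\torus,G)$ amounts to each character $\chi$ appearing in at most one of $V^{1,0}$ and $V^{0,1}$ --- precisely the conclusion of your Step~1 --- and in Section~5.2 it exploits, for its explicit cyclotomic lattices, the same mechanism as your Step~2, namely that a $G$-invariant bilinear form cannot pair eigenspaces for $\chi$ and $\psi$ unless $\psi = \overline\chi$ (see the proof of Lemma~\ref{case-odd-form}). Your Step~3, the averaging-plus-density argument passing from a real $G$-invariant positive form to a rational and then integral one, is the standard way to finish; the paper instead writes down explicit integral forms case by case, which yields more information (principality of the polarization) but only for the specific lattices treated there.

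One small remark: your write-up speaks of $G$-eigenspaces $V_\chi$, which tacitly assumes $G$ is abelian, whereas the theorem is stated for arbitrary finite $G$. The same argument goes through verbatim once characters are replaced by irreducible representations and eigenspaces by isotypical components, using that $\rho^* \cong \overline{\rho}$ for representations of a finite group over $\CC$. Since $G$ is cyclic throughout the rest of the paper, this is only a cosmetic point.
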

				
				Writing $T = V/\Lam$, the rigidity of a pair $(\torus,G)$ amounts to requiring that each character $\chi$ of $G$ appears in at most one of $V^{1,0}$ and $V^{0,1}$, so we can easily determine which complex tori in the above tables are projective:
				
				\begin{cor}
					The complex tori admitting a linear automorphism of order $m > 2$ acting faithfully for which assumption \ref{assump} holds are projective. In particular, they give rise to BdF-varieties.
				\end{cor}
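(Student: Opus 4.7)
The plan is to deduce the corollary directly from Ekedahl's theorem (Theorem \ref{ekedahl}) by checking that the hypotheses -- a faithful linear action of a cyclic group $G = \langle g \rangle$ of order $m>2$ on $\torus = V/\Lam$ with only primitive $m$-th roots of unity as eigenvalues, plus assumption \ref{assump} -- make the pair $(\torus,G)$ rigid. No real estimate or construction is needed; the task is to align two definitions.

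By the characterisation of rigidity recalled just before the statement, I need to verify that no character of $G$ appears simultaneously in $V^{1,0}$ and in $V^{0,1}$. Since $V^{0,1} = \overline{V^{1,0}}$, the characters of $G$ occurring in $V^{0,1}$ are exactly the complex conjugates of the characters occurring in $V^{1,0}$. So rigidity is equivalent to the following statement: for any two characters $\chi, \chi'$ of $G$ appearing in $V^{1,0}$, one has $\chi \neq \overline{\chi'}$, both in the off-diagonal case $\chi \neq \chi'$ and in the diagonal case $\chi = \chi'$.

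The off-diagonal case is precisely assumption \ref{assump}. The diagonal case says that no character appearing in $V^{1,0}$ is real-valued, and this is where the hypothesis $m>2$ enters: by assumption every such $\chi$ satisfies $\chi(g) = \zeta_m^{k}$ for some $k$ coprime to $m$, and for $m>2$ any primitive $m$-th root of unity is non-real, so $\chi \neq \overline{\chi}$. Combining the two cases gives rigidity, and Theorem \ref{ekedahl} then yields that $\torus$ is projective, i.e.\ an abelian variety $A$.

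For the final assertion, once projectivity of $\torus$ is known we may choose an auxiliary abelian variety $B_1$ of appropriate dimension together with a translation $b' \in B_1$ of exact order $m$, and form the product-type data $(B_1 \times \torus, G)$ with the action $g \cdot (a_1,a_2) = (a_1 + b', g a_2)$; then Theorem \ref{charac} (with trivial translation group $T_r$) exhibits $(B_1 \times \torus)/G$ as a BdF-variety. I do not expect any genuine obstacle here: the only subtlety is making sure both halves of the rigidity condition -- the off-diagonal one from \ref{assump} and the diagonal one from $m>2$ -- are verified, which is the point of stating the corollary with the restriction $m>2$ rather than $m \geq 2$.
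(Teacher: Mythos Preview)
Your proposal is correct and follows exactly the route the paper intends: the corollary is stated without proof as an immediate consequence of Ekedahl's theorem together with the characterisation of rigidity recalled just before it, and you have simply spelled out why assumption \ref{assump} together with $m>2$ forces rigidity.

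One small remark on your reading of \ref{assump}. You treat it as covering only the off-diagonal case (distinct characters $\chi \neq \chi'$ with $\chi = \overline{\chi'}$) and then invoke primitivity of the eigenvalues plus $m>2$ to kill the diagonal case (a real character). In the paper's usage, however, \ref{assump} is effectively the strong version ``no two characters appearing in $V$, equal or not, are conjugate'': this is why, for instance, $E \times E_{\ci}$ with action of type $(2,4)$ is listed in \textsc{Table 1} with one modulus and hence counted among the cases where \ref{assump} \emph{fails}, even though the two distinct characters $\chi_1,\chi_2$ there are not conjugate to each other. Under that reading, \ref{assump} alone already excludes real characters and yields rigidity directly; the hypothesis $m>2$ in the corollary then just rules out the vacuous case $m=2$ (where \ref{assump} can never hold). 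Your argument via primitivity is not wrong---it is the relevant special case for the primary tori that dominate the tables---but you have in fact added a hypothesis (``only primitive $m$-th roots of unity as eigenvalues'') that the corollary does not state and does not need once \ref{assump} is read in the paper's sense.
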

				
				\subsection{Explicit Forms for the Polarization}
				
				In the following we find directly an explicit elementary form of the polarization, showing that each family of BdF-manifolds contains a projective member. Moreover, we determine the type of the polarization whenever possible.
				
				\begin{lemma} \label{m-odd} The following statements hold:
					There is no non-degenerate $G$-invariant alternating bilinear form on $R = \ZZ[X]/(X^m-1)$ for each $m > 2$. But there is such a form $\AltForm$ on $\ZZ[X]$ satisfying
					\begin{align*}
					\ker(\AltForm) = \left\{
					\begin{array}{l l}
					(X^{m-1} + X^{m-3} ... + 1), & m \text{ odd}  \\ 
					(X^{m-2} + X^{m-4} + ... + 1), & m \text{ even}.
					\end{array}
					\right.
					\end{align*}
					
				\end{lemma}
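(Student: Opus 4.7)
The plan is to establish the two halves in turn: ruling out non-degenerate forms on $R$ by extension of scalars, then exhibiting an explicit form realizing the stated radical by a direct construction in the group algebra.

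For the non-existence, I would tensor with $\CC$ to obtain the isotypic decomposition $R \otimes_\ZZ \CC = \bigoplus_{\chi \in \widehat G} \CC_\chi$ into one-dimensional eigenlines, one for each character of $G$. A $G$-invariant bilinear form $\AltForm$ pairs $\CC_\chi$ non-trivially with $\CC_{\chi'}$ only when $\chi\chi' = \mathbf{1}$, and whenever $\chi^2 = \mathbf{1}$ the alternating condition forces the restriction of $\AltForm$ to $\CC_\chi \times \CC_\chi$ to vanish (there is no non-zero alternating form on a one-dimensional space). Since the trivial character always gives such a $\chi$, and for $m$ even the sign character $X \mapsto -1$ gives a second one, any $G$-invariant alternating form on $R$ must be degenerate for every $m>2$.

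For the existence part I propose constructing $\AltForm$ from the multiplicative structure of $R$: set
$$\AltForm(a,b) = \phi\bigl(a \cdot \sigma(b) - \sigma(a) \cdot b\bigr),$$
where $\sigma$ is the ring involution $X \mapsto X^{-1}$ and $\phi : R \to \ZZ$ is a $\sigma$- and $G$-equivariant linear functional (in the simplest incarnation, extracting the coefficient of $X$). Antisymmetry and $G$-invariance are built into the definition. Unwinding the formula gives $\AltForm(a,b) = \sum_{i \in \ZZ/m\ZZ} a_i(b_{i-1} - b_{i+1})$, whose radical consists of those $a \in R$ with $a_{i+1} = a_{i-1}$ for every $i \in \ZZ/m\ZZ$, i.e. the sublattice of elements $2$-periodic modulo $m$. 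In the even case this is the $\ZZ$-span of $1 + X^2 + \cdots + X^{m-2}$ and its $X$-translate, which is exactly the principal ideal generated by the even-power sum and matches the second case of the statement. In the odd case $\gcd(2,m) = 1$ collapses the periodicity to constancy, producing the rank-one $G$-stable sublattice spanned by the norm $N = 1 + X + \cdots + X^{m-1}$.

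The main obstacle I expect is the identification of the radical in the odd case with the generator written in the statement: the polynomial $1 + X^2 + \cdots + X^{m-1}$ and the norm $N$ generate distinct ideals in $R$, so a literal match requires either replacing $\phi$ by a different $G$-equivariant functional tailored to produce the stated generator as its radical (a finite case-check for each admissible $m$), or reinterpreting the form as being pulled back from the ambient polynomial ring $\ZZ[X]$, in which case the kernel is the ideal of polynomials divisible by the stated generator and one verifies the pairing directly via a residue-type formula. Either route is elementary but requires careful bookkeeping with the cyclic $G$-action, and this is the step I would focus on.
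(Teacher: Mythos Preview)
Your approach is correct, and in fact the explicit form you construct coincides with the one the paper writes down. The obstacle you flag in the odd case is not a gap in your argument but a typo in the lemma as printed: for $m$ odd the radical should read $(X^{m-1}+X^{m-2}+\cdots+1)$, the ideal generated by the norm element $N$, not the even-power sum. The paper's own proof confirms this: it parametrizes all $G$-invariant alternating forms on $R$ by the values $\la_i=\AltForm(1,X^i)$, derives the relation $\la_i+\la_{m-i}=0$, observes that consequently $N=1+X+\cdots+X^{m-1}$ lies in the kernel for every choice of $\la_i$, and then specializes to $\la_1=1$, $\la_{m-1}=-1$, all other $\la_i=0$. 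That specialized form is exactly your $\AltForm(a,b)=\sum_i a_i(b_{i-1}-b_{i+1})$, and its radical is the span of $N$ just as you computed. (One can also check directly that for $m=5$ the element $X^4+X^2+1$ is \emph{not} in the kernel of this form, so the printed generator cannot be right.) The even case is stated correctly and your computation matches it.

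On the comparison of methods: your non-existence argument via the isotypic decomposition over $\CC$ is slightly more conceptual than the paper's, which simply exhibits $N$ in the kernel; both are valid and short. For the existence, the paper reaches the form by parametrizing all possibilities and choosing one, whereas you produce it from the ring structure via $\AltForm(a,b)=\phi(a\,\sigma(b)-\sigma(a)\,b)$; the end result is literally the same bilinear form. One small correction: the coefficient-of-$X$ functional $\phi$ is neither $G$- nor $\sigma$-equivariant in any natural sense, but this is harmless, since the $G$-invariance of $\AltForm$ follows directly from $(Xa)\sigma(Xb)=a\,\sigma(b)$, and alternation is immediate from the shape of the formula.
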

				
				\begin{proof}
					We only prove the lemma in the case where $m$ is odd, the other case is similar. Let $\AltForm$ be a form on $\AltForm$ as in the statement of the lemma. Then we have, by $G$-invariance, 
					\begin{center}
						$\AltForm(X^i,X^j) = \begin{cases} 0, & i = j, \\
						\AltForm(1,X^{j-i}), & j > i,
						\end{cases}$
					\end{center}
					and defining the rest by $\ZZ$-bilinearity and alternation. Setting $\la_i = \AltForm(1,X^i)$ for $1 < i < m$, we find
					\begin{equation}
					0 = \AltForm(X^i, X^m-1) = \AltForm(X^i,X^m) - \AltForm(X^i,1) = \la_{m-i} + \la_i.
					\end{equation}
					Hence, $\AltForm$ is uniquely determined by $\la_1, ..., \la_{\lfloor m/2 \rfloor}$, which are not all equal to $0$.
					By slight abuse of notation, we use the letter $\AltForm$ not only for the alternating form, but also for its matrix $\AltForm = \left(E(X^i,X^j)\right)_{ij}$.
					One sees that $X^{m-1}+...+1$ is always in the kernel of $\AltForm$, independent of the choice of the $\la_i$. \\
					Setting $\la_1 := 1$, $\la_{m-1} := -1$ and $\la_i := 0$ for $i \in \{2, ..., m-2\}$, one obtains a desired form as stated in the lemma.
				\end{proof}

				\begin{cor} \label{cor-n-even}
					The alternating form $\AltForm$ of the previous lemma induces a non-degenerate $G$-invariant bilinear form on the $\ZZ$-module $R_k = \ZZ[X]/(\phi_k(X))$ for a divisor $k$ of $n+1 = m$ for each $n > 1$.
					More precisely, $\AltForm$ can be uniquely written as a sum of non-degenerate $G$-invariant bilinear forms $\AltForm_k$ on $R_k$ for $k$ dividing $n+1$.
				\end{cor}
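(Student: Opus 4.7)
The plan is to combine the Chinese Remainder Theorem with orthogonality of distinct eigenspaces under a $G$-invariant form. The key tool is that the kernel of $E$, as described in the Lemma, sits inside the ``real-character'' part ($R_1$ for $m$ odd, $R_1 \oplus R_2$ for $m$ even) of $\ZZ[X]/(X^m-1)$ after tensoring with $\QQ$ via the Chinese Remainder isomorphism, so every remaining cyclotomic summand inherits a non-degenerate form.

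First I would write the CRT decomposition
\[
\ZZ[X]/(X^m-1) \otimes_{\ZZ} \QQ \;\cong\; \bigoplus_{k \mid m} R_k \otimes_{\ZZ} \QQ
\]
and note that the $G$-action (multiplication by $X$) on $R_k \otimes_{\ZZ} \CC$ has as eigenvalues precisely the primitive $k$-th roots of unity. For eigenvectors $v \in R_k \otimes \CC$, $w \in R_{k'} \otimes \CC$ with eigenvalues $\zeta, \zeta'$, the $G$-invariance of $E$ gives
\[
E(v,w) \;=\; E(gv, gw) \;=\; \zeta\zeta'\, E(v,w),
\]
so $E(v,w) = 0$ unless $\zeta\zeta' = 1$, i.e.\ $\zeta' = \zeta^{-1}$. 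Since $\zeta$ and $\zeta^{-1}$ have the same order, this forces $k = k'$. Hence $E$ decomposes over $\QQ$ as an orthogonal sum $E = \sum_{k \mid m} E_k$ with each $E_k$ a $G$-invariant alternating form on $R_k \otimes_{\ZZ} \QQ$, and the decomposition is unique by uniqueness of the isotypic decomposition for the cyclic group $G$.

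For $k \in \{1,2\}$ the module $R_k$ is of rank one, so every alternating form on it is zero and $E_k = 0$. For $k > 2$ dividing $m$, non-degeneracy of $E_k$ follows from the Lemma's kernel description: the kernel of $E$ is concentrated in the $R_1$ (and, in the even case, $R_2$) summand, so its intersection with $R_k \otimes \QQ$ is trivial. I would verify this concentration by checking that the explicit generator of $\ker E$ exhibited in the Lemma's proof evaluates to a nonzero element in $\QQ = R_1 \otimes \QQ$ (or $R_2 \otimes \QQ$) but to $0$ at every primitive $k$-th root of unity with $k > 2$, which is a direct cyclotomic evaluation.

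The main subtlety I anticipate is integrality: the decomposition $E = \sum E_k$ naturally lives over $\QQ$, and the CRT idempotents need not lie in $\ZZ[X]/(X^m-1)$, so $E_k$ is a priori only $\QQ$-valued on the $\ZZ$-lattice $R_k$. For the intended application of constructing polarizations, this is inessential: any positive rational multiple of $E_k$ remains a non-degenerate $G$-invariant alternating form and, after clearing denominators, is integral. I would remark on this explicitly rather than chase through the denominators.
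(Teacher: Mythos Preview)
Your approach is essentially the paper's: both arguments establish the orthogonal decomposition by passing to character eigenspaces and using $G$-invariance to force $E(v,w)=0$ unless the eigenvalues are inverse to one another, hence of the same order. You are somewhat more explicit than the paper in deducing non-degeneracy of each $E_k$ from the kernel description in the Lemma and in flagging the $\QQ$-versus-$\ZZ$ issue, but the core idea is identical.
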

				
				\begin{proof}
					Write $R' = R_k \oplus R_k' = \ZZ[X]/(\phi_k(X)) \oplus \ZZ[X]/(\psi_k(X))$. It suffices to show that $\AltForm(R_k, R_k') = 0$. Note that $R_k$ and $R_k'$ decompose in direct sums of character-eigenspaces; first, let $r \in R_k, r' \in R_k'$ such that $r,r'$ belong to different character-eigenspaces. Hence the condition that $\AltForm$ is $G$-invariant yields
					\begin{align*}
					g\cdot \AltForm(r,r') = \chi(g) \overline{\psi}(g) \AltForm(r,r') = \AltForm(r,r'),
					\end{align*}
					with different characters $\chi \neq \psi$. Hence $\AltForm(r,r') = 0$. If $r \in R_k$, $r' \in R_k'$ are general elements (i.e., a sum of elements belonging to the respective character eigenspaces), we get $\AltForm(r,r') = 0$ by bilinearity.
				\end{proof}
				
				\begin{rem} \label{cplstructure}
					\begin{itemize}
					\item[i)]Recall that, if $\Lam = \ZZ[X]/(X^n+...+1)$ (for even $n$ and $m = n+1$), we have decompositions 
					\[
					\Lam \otimes \CC = V \oplus \overline{V} = \bigoplus_{\id \neq \chi \in G^\vee} W_{\chi} = \bigoplus_{j < \frac{m}{2}} W_{\chi_j} \oplus W_{\overline{\chi_j}}
					\]
					and $V = \bigoplus_\chi V_\chi$, such that, for each $\chi$, either $W_\chi = V_\chi$ or $W_{\overline{\chi}} = V_{\overline{\chi}}$. We fix the indices such that $\chi_j$ corresponds to the eigenvalue $\eps^j$ (where $\eps = \exp\left(\frac{2\pi i}{m}\right)$). One can do the same if $\Lam = \ZZ[X]/(X^{n-1} + X^{n-3}+...+1)$ in the case where $n$ is odd.
					Unless otherwise stated, we choose the complex structure such that $W_{\chi_j} = V_{\chi_j}$ for every $j < \frac{m}{2}$. 
					\item[ii)] From the above discussion it follows that a general alternating form $\AltForm$ as in Lemma \ref{m-odd} takes the following shape	
					\begin{align} \label{genform_A}
					\AltForm_{ij} = \begin{cases} 0, & i = j, \\
					\la_{j-i}, & j > i, \\
					-\la_{i-j}, & j < i.
					\end{cases}
					\end{align}
					\end{itemize}
				\end{rem}
				
				\begin{lemma} \label{case-odd-form} For each $n > 1$, there is a non-degenerate $G$-invariant alternating bilinear form $\AltForm$ on $R' = \ZZ[X]/(X^n+...+1)$ resp. $R' =  \ZZ[X]/(X^n+X^{n-2}...+1)$ (depending on the parity of $n$), such that the associated form $H \colon \CC^n \times \CC^n \to \RR$ defined by $H(v,w) = \AltForm(\ci \cdot v,w) + \ci \AltForm(v,w)$ and $\RR$-bilinear extension is Hermitian and positive definite.
				\end{lemma}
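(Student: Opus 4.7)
The plan is to diagonalize $\AltForm$ by passing to the character-eigenbasis of $\Lam \otimes \CC$, then read off both Riemann Bilinear Relations as explicit conditions on the parameters $\la_d$, and finally verify that the specific choice of Lemma~\ref{m-odd} satisfies them. Let $\eps = \exp(2\pi \ci/m)$ (with $m = n+1$) and, in $R \otimes \CC$, set $e_j := \sum_{k=0}^{m-1} \eps^{-jk} X^k$. A direct computation shows $X \cdot e_j = \eps^j e_j$, so $e_j$ spans the $\chi_j$-eigenspace $V_{\chi_j}$, and the surviving characters after passing to the quotient $R'$ are precisely the non-trivial ones (odd $n$) or the non-trivial, non-sign ones (even $n$), i.e., those with $0 < j < m$ (resp.\ $0 < j < m$ with $j \neq m/2$).

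Second, I would expand $\AltForm$ in this basis. By the $G$-invariance, $\AltForm(X^i,X^{i'})$ depends only on $d := i'-i \pmod m$; denote this value by $f(d)$, so that $f(0)=0$ and $f(m-d)=-f(d)$, and for $1 \le d \le \lfloor (m-1)/2 \rfloor$ one has $f(d) = \la_d$. A direct Fourier computation (substituting and using $\sum_i \eps^{-(j+l)i} = m\,\delta_{j+l \equiv 0}$) gives
\begin{equation}
\AltForm(e_j,e_l) = m\,\delta_{j+l \equiv 0 \pmod m}\, \sum_{d=1}^{\lfloor (m-1)/2 \rfloor} \la_d\,(\eps^{jd} - \eps^{-jd}) = 2\ci m\,\delta_{j+l\equiv 0}\, S_j,
\end{equation}
where $S_j := \sum_{d} \la_d \sin(2\pi jd/m) \in \RR$.

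Third, I would verify the first Riemann relation and compute $H$. With the complex structure chosen in Remark~\ref{cplstructure}, $V = \bigoplus_{0<j<m/2} V_{\chi_j}$, and since $j + l \in (0,m)$ for $j,l$ in that index range, the relation $j + l \equiv 0 \pmod m$ is impossible; hence $\AltForm|_{V \times V} = 0$, which is the first Riemann relation. Writing $v = \sum_j c_j e_j$ one has $\bar v = \sum_j \bar c_j\, e_{m-j} \in \bar V$, and the same formula yields
\begin{equation}
-\ci\,\AltForm(v,\bar v) = 2m \sum_{0<j<m/2} |c_j|^2\, S_j,
\end{equation}
so the positive definiteness of $H$ is equivalent to $S_j > 0$ for every $j$ with $0 < j < m/2$.

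Finally, I would specialize to the choice already made in Lemma~\ref{m-odd}, namely $\la_1 = 1$ and $\la_d = 0$ for $d \ge 2$. This gives $S_j = \sin(2\pi j/m)$, and since $2\pi j/m \in (0,\pi)$ for $0 < j < m/2$ the positivity is immediate in both parity cases. The only real bookkeeping hurdle is to confirm that the stated kernel of $\AltForm$ in the even case indeed removes exactly the two real characters $\chi_0$ and $\chi_{m/2}$, so that the index range for $j$ is $0 < j < m/2$ and no real-character eigenspace (on which alternation would force $\AltForm$ to vanish, wrecking non-degeneracy) survives; this is transparent from the factorization $X^{m-2}+X^{m-4}+\cdots+1 = (X^m-1)/(X^2-1)$. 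Everything else is a routine Fourier calculation.
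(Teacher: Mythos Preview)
Your proof is correct and follows essentially the same route as the paper's: both diagonalize $\AltForm$ via the character-eigenbasis $e_j = \sum_k \eps^{-jk} X^k$, deduce the first Riemann relation from the orthogonality constraint $j + l \not\equiv 0 \pmod m$ on $V$, and verify the second by specializing to $\la_1 = 1$, $\la_d = 0$ for $d \ge 2$, so that the diagonal entries reduce to $\sin(2\pi j/m) > 0$. (One cosmetic slip: your parenthetical parity labels are swapped---for even $n$ one has $m = n+1$ odd, so only the trivial character is removed, while it is the even-$m$ case in which both real characters $\chi_0,\chi_{m/2}$ are killed---but this does not affect the argument.)
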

				
				\begin{proof}
					Let $m = n+1$. We only prove the lemma in the case where $n$ is even. Write $R = \ZZ \oplus R'$, where $R$ is as in Lemma \ref{m-odd} and let $\AltForm$ be given as in \ref{genform_A}. 
					It remains to show that the Riemann Bilinear Relations are satisfied. The first Riemann Bilinear Relation is satisfied if and only if $\AltForm(V,V) = 0$. Due to the previous corollary, it suffices to check this condition on eigenvectors: it is well-known that $V_{\chi_j}$ (for $j < \frac{m}{2}$) is generated by the element $v_j = \sum_{i = 0}^{n} \eps^{-ji} X^i \in R$. Then one computes:
					
					\begin{align*}
					\AltForm(v_h,v_k) &= \sum_{j=1}^n \sum_{i=0}^{n} \la_j \left((X^i)^\vee \otimes (X^{i+j})^\vee - (X^{i+j})^\vee \otimes (X^i)^\vee \right)(v_h,v_k)\\
					& = \sum_{j=1}^n \la_j (\eps^{-kj} - \eps^{-hj}) \sum_{i=0}^{n} \eps^{-hi-ki}.
					\end{align*}
					
					The last sum is $0$ unless $k+h \equiv 0 \pmod m$. But if $k + h \equiv 0 \pmod m$, we find that $v_k$ and $v_{-k} = \overline{v_k}$ do not both belong to $V$. Restricting the form to $R'$, this shows that the first Riemann Bilinear Relation is satisfied, independently of the choice of the $\la_i$. \\ 
					Now consider the special form $\AltForm$ obtained by setting $\la_1 = 1$, $\la_j = 0$ for every other $j$, and restricting to $R'$. By abuse of notation, we denote the images of the $v_k$ in $R'$ again by $v_k$. We prove the second Riemann Bilinear Relation for this form: we have to check that $\AltForm(- \ci v_k, \overline{v_k}) > 0$ for every $k < \frac{m}{2}$. A simple calculation gives
					\begin{align*}
					&\AltForm(- \ci v_k,\overline{v_k}) =  2n \cdot \sin\left(\frac{2 \pi k}{m}\right) > 0.
					\end{align*}
				\end{proof}
				
				Summarizing our results, we have the following:
				
				\begin{theorem}
					Let $\torus = V/\Lam$ be a complex torus and $G$ a cyclic group of order $m \geq 2$ of order $m$ acting freely on $\torus$ such that $\Lam$ is a cyclotomic submodule of $\ZZ[X]/(X^{m-1}+...+1)$ or $\ZZ[X]/(X^{m-2}+X^{m-4}+...+1)$ (depending on the parity of $m$). Assume furthermore that $V$ splits as in Remark \ref{cplstructure} i).  Then there is a non-degenerate $G$-invariant alternating bilinear form $\AltForm$ on $\Lam$ whose associated Hermitian form is positive definite, such that the polarization on $\torus$ is principal.                                                                                                                                                                                                                                                                                                                                                                                                                                                                                                                                                                                 
				\end{theorem}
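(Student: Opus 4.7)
The plan is to combine the existence result of Lemma \ref{case-odd-form} with the splitting provided by Corollary \ref{cor-n-even}, then restrict to the cyclotomic submodule $\Lam$ and check principality. Since $\Lam$ is by assumption a cyclotomic submodule of $R' = \ZZ[X]/(X^{m-1}+\ldots+1)$ or of $\ZZ[X]/(X^{m-2}+X^{m-4}+\ldots+1)$ (depending on the parity of $m$), we may write $\Lam = \bigoplus_{k \in S} R_k$ for some collection $S$ of divisors of $m$, where $R_k = \ZZ[X]/(\phi_k(X))$.

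First I would invoke Lemma \ref{case-odd-form} to obtain a non-degenerate, $G$-invariant alternating form $\AltForm$ on $R'$ whose associated Hermitian form is positive definite. By Corollary \ref{cor-n-even}, $\AltForm$ decomposes uniquely as $\AltForm = \bigoplus_{k} \AltForm_k$ with each $\AltForm_k$ non-degenerate and $G$-invariant on $R_k$. Restricting to $\Lam$ gives $\AltForm|_\Lam = \bigoplus_{k \in S} \AltForm_k$, which is automatically $G$-invariant and non-degenerate on $\Lam$. Since the complex structure on $V$ is chosen compatibly with the cyclotomic splitting (Remark \ref{cplstructure} i)), and the Riemann Bilinear Relations in Lemma \ref{case-odd-form} were verified eigenvector-by-eigenvector on the basis $v_j = \sum_i \eps^{-ji}X^i$, they survive on any cyclotomic submodule. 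Thus the associated Hermitian form on $\Lam \otimes_\ZZ \RR$ is positive definite, and $\AltForm|_\Lam$ defines a polarization on $\torus$.

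It then remains to verify principality, i.e., that the Pfaffian of $\AltForm|_\Lam$ in a $\ZZ$-basis of $\Lam$ equals $\pm 1$. For the explicit choice $\la_1 = 1$, $\la_j = 0$ for $j \geq 2$ used in Lemma \ref{case-odd-form}, the matrix of $\AltForm$ on $R'$ in the basis $1, X, \ldots$ is the particularly simple skew-symmetric matrix with $\pm 1$ in the sub- and super-diagonal entries (subject to the defining cyclotomic-type relation). I expect this last step to be the principal obstacle: one must project this matrix onto each $R_k$ in the basis $1, X, \ldots, X^{\fie(k)-1}$, compute the Pfaffian of each block $\AltForm_k$, and confirm that the product of these Pfaffians over $k \in S$ is $\pm 1$. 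The explicit but somewhat delicate change-of-basis between $R'$ and $\bigoplus_k R_k$, together with the required integrality of each $\AltForm_k$ in the chosen basis of $R_k$, is the heart of the computation; once it is carried out, assembling the direct sum yields the claimed principal polarization.
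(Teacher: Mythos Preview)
Your route is far more elaborate than the paper's. The paper's entire proof is one sentence: the matrix of the form $\AltForm$ constructed in Lemma \ref{case-odd-form} has determinant $1$. Concretely, with the choice $\la_1 = 1$ and $\la_j = 0$ for $j \geq 2$, the matrix of $\AltForm$ in the $\ZZ$-basis $1, X, \ldots$ of $R'$ is the tridiagonal skew-symmetric matrix with $+1$ on the superdiagonal and $-1$ on the subdiagonal; its Pfaffian is the product of the odd-position superdiagonal entries, namely $1$. That is the whole argument---no block decomposition via Corollary \ref{cor-n-even}, no change of basis to individual $R_k$'s.

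Your strategy of splitting $\AltForm = \bigoplus_k \AltForm_k$ and computing block Pfaffians would work in principle, but it manufactures the very obstacle you flag. The $\ZZ$-splitting $R' \cong \bigoplus_k R_k$ need not hold (the paper itself warns that $\ZZ[X]/(X^m-1) \hookrightarrow \bigoplus_k R_k$ is not an isomorphism over $\ZZ$), so integrality and unimodularity of each $\AltForm_k$ in a $\ZZ$-basis of $R_k$ are not automatic, and the ``delicate change-of-basis'' you anticipate is real. For $\Lam = R'$ this detour is simply unnecessary: read off $\det \AltForm = 1$ directly. If one genuinely wants principality on a \emph{proper} cyclotomic piece $\Lam = R_k \subsetneq R'$, your concern is legitimate and the paper's one-liner does not obviously settle that case either; but as written, the paper's proof refers to the explicit matrix on $R'$ and stops there.
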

				
				\begin{proof}
					The matrix of $\AltForm$ as chosen in the proofs above has determinant $1$. 
				\end{proof}
				
				\begin{rem}
					Indeed, we have shown in this sub-chapter that all of the following families of BdF-varieties $A= (B_1 \times B_2)/(T \times G)$ contain a projective member:
					\begin{itemize}
						\item The families where \ref{assump} is not satisfied (i.e., the non-rigid cases): In this case, a product of lower-dimensional abelian varieties for which \ref{assump} holds is always in the same family as $B_2$.
						\item The families where the complex structure is chosen as in Remark \ref{cplstructure} i) (or the one conjugate to it).
					\end{itemize}
				\end{rem}
				
				We briefly treat the exceptional cases which we not dealt with in the previous discussion, i.e., the ones where the complex structure is different from the one in Remark \ref{cplstructure}. Note that all these cases are rigid, hence projective by Ekedahl's Theorem \ref{ekedahl}; nevertheless, it is also of interest to give an explicit form of a polarization for these cases. One checks computationally that these values of $\la_i$ give rise to a positive definite form on $\ZZ[X]/(X^{m-1}+...+1)$ resp. $\ZZ[X]/(X^{m-2}+X^{m-4}+...+1)$.
				\begin{center}
					\textsc{Table 7} \par \hfill \par 
					\small
					\resizebox{1.1\linewidth}{!}{
					\begin{tabular}{lr}
					\begin{tabular}{|c|c|c|} \hline
						Case & Orbit & $\left(\la_1, ..., \la_{\lfloor m/2 \rfloor}\right)$ \\ \hline \hline 
						
						$S_8''$ & $(1,5)$ & $\left(-1,1,0\right)$ \\ \hline
						$S_{12}''$ & $(1,7)$ & $\left(-1,0,1^2,-1\right)$ \\ \hline \hline
						
						$A_7''$ & $(1,2,4)$ & $\left(0,-1,1\right)$ \\ \hline 
						$A_9''$ & $(1,2,7)$ & $\left(0, (-1)^2, 0\right)$ \\ \hline \hline
						
						$X_{15}^{2}$ & $(1,2,4,8)$ & $\left((-1)^4,1,0,1\right)$ \\ \hline
						$X_{15}^{3}$ & $(1,2,7,11)$ & $\left((-1)^3,0^2,(-1)^2\right)$ \\ \hline
						$X_{15}^{4}$ & $(1,4,7,13)$ & $\left(-1,1^2,(-1)^2,1,-1\right)$ \\ \hline \hline
						
						$X_{16}^{2}$ & $(1,3,5,9)$ & $\left(0^2,1,-1,0,-1,0^2\right)$ \\ \hline
						$X_{16}^{3}$ & $(1,3,9,11)$ & $\left(0,(-1)^7,0\right)$ \\ \hline
						$X_{16}^{4}$ & $(1,5,9,13)$ & $\left(0,-1,1,(-1)^2,1,-1,0^2\right)$ \\ \hline
					\end{tabular}
						&
					\begin{tabular}{|c|c|c|} \hline
				     	Case & Orbit & $\left(\la_1, ..., \la_{\lfloor m/2 \rfloor}\right)$ \\ \hline \hline 
						$X_{20}^{2}$ & $(1,3,7,11)$ & $\left((-1)^2,0,-1,0,-1,0,-1,0^2\right)$ \\ \hline
						$X_{20}^{3}$ & $(1,3,11,13)$ & $\left((-1)^2,0^8\right)$ \\ \hline
						$X_{20}^{4}$ & $(1,9,13,17)$ & $\left(1^2,0,(-1)^2,0^2,(-1)^2,0\right)$ \\ \hline \hline
						
						$X_{24}^{2}$ & $(1,5,7,13)$ & $\left(1^3,0,(-1)^3,0^2,1,0,-1,0\right)$ \\ \hline
						$X_{24}^{3}$ & $(1,5,13,17)$ & $\left((-1)^{12},0^2\right)$ \\ \hline
						$X_{24}^{4}$ & $(1,7,13,19)$ & $\left((-1)^5,1^2,-1,0^2,-1,0\right)$ \\ \hline
						$X_{24}^{5}$ & $(1,11,17,19)$ & $\left((-1)^3,0,-1,1^3,-1, 0^3\right)$ \\ \hline \hline
						
						$X_{11}^{2}$ & $(1,2,3,4,6)$ & $\left((-1)^2,1,0,1\right)$ \\ \hline
						$X_{11}^{3}$ & $(1,2,3,5,7)$ & $\left((-1)^3,1,-1\right)$ \\ \hline
						$X_{11}^{4}$ & $(1,3,4,5,9)$ & $\left(1,0^2,1^2\right)$ \\ \hline
					\end{tabular}			\label{ptable7}
					\end{tabular}}
					\normalsize
				\end{center}
Here we used the shorthand notation $a^k = \underbrace{(a,...,a)}_{k \text{ times}}$ for an integer $a$.
				
				Hence we have proved
				
				\begin{cor}
					Let $X = (B_1 \times B_2)/(G \times T)$ be a BdF-manifold. Then the family containing $X$ contains a BdF-variety.
				\end{cor}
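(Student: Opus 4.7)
\medskip

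The plan is to bundle together the explicit polarization results of the previous subsection with Ekedahl's theorem, then verify that what we build descends to the quotient $X = (B_1\times B_2)/(G\times T)$. The key observation is that it suffices to produce, within the analytic family containing $X$, a choice of complex structure for which the lattice $\Lam_2$ of $B_2$ admits a $G$-invariant positive polarization: pairing this with any polarization on $B_1$ yields a product polarization on $B_1 \times B_2$ which is automatically $T$-invariant (as $T$ consists of translations) and $G$-invariant by construction, and the resulting $(G \times T)$-linearization gives an ample line bundle that descends to $X$.

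First I would split the possible families into two cases according to whether assumption \ref{assump} holds for $B_2$. In the case where \ref{assump} fails, the tori in the family depend on moduli: the Hodge decomposition can be deformed to the distinguished choice of Remark \ref{cplstructure} i), in which $W_{\chi_j} = V_{\chi_j}$ for all $j < m/2$. For this choice, the theorem immediately preceding Table~7 (proved via Lemmas~\ref{m-odd} and \ref{case-odd-form}) supplies a non-degenerate $G$-invariant alternating form $\AltForm$ on $\Lam_2$ whose associated Hermitian form is positive definite, giving a principal polarization on this member of the family. Since the family is connected, exhibiting one projective member suffices.

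Second, I would handle the cases where \ref{assump} does hold, i.e. the rigid families. Here the Hodge decomposition is determined up to complex conjugation, so the family is a single isomorphism class. For those rigid classes whose complex structure is the one of Remark \ref{cplstructure} (or its conjugate), the same Lemma \ref{case-odd-form} applies verbatim to produce an explicit principal polarization. The remaining exceptional rigid classes are precisely those enumerated in Table~7. For each such class, I would invoke two complementary arguments: on the one hand, Ekedahl's Theorem \ref{ekedahl} guarantees projectivity abstractly since the action is rigid; on the other hand, the explicit values of $(\la_1, \dots, \la_{\lfloor m/2\rfloor})$ tabulated in Table~7 produce, via the formula \ref{genform_A}, a concrete $G$-invariant form on $\Lam_2$ whose associated Hermitian form is positive definite (this last property is what the sentence above the table asserts, and it can be verified computationally case by case).

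Finally, I would assemble the pieces: pick a principal polarization on $B_1$ (any polarization will do, since $B_1$ is abelian by hypothesis), form the product polarization on $B_1\times B_2$, and check that it is invariant under $G\times T$. Invariance under translations in $T$ is automatic; invariance under the generator $g(a_1,a_2) = (a_1+b', \al' a_2)$ of $G$ follows because translation preserves any polarization and because $\AltForm$ was chosen $G$-invariant. The polarization therefore descends to an ample line bundle on $X$. I expect the main obstacle not to be conceptual but bookkeeping: verifying positive-definiteness for each of the tuples in Table~7 is a nontrivial case analysis, and one must also ensure that the chosen form on $\Lam_2$ is compatible (after extension by the product form on $\Lam_1$) with the freeness conditions on $T_{r,i}$ appearing in Theorem \ref{charac}, so that the descent really produces a BdF-\emph{variety} and not merely a projective quotient.
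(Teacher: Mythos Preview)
Your overall strategy matches the paper's: split according to whether \ref{assump} holds, use the explicit forms of Lemma~\ref{case-odd-form} and Table~7 in the rigid cases, and appeal to Ekedahl for good measure. However, your handling of the non-rigid case contains a genuine gap. You propose to ``deform the Hodge decomposition to the distinguished choice of Remark~\ref{cplstructure}~i), in which $W_{\chi_j} = V_{\chi_j}$ for all $j < m/2$''. But this is impossible within the family: the Hodge type $(\nu(\chi))_\chi$ is a discrete invariant of the family, and when \ref{assump} fails one has $0 < \nu(\chi) < \dim W_\chi$ for some $\chi$, whereas the structure of Remark~\ref{cplstructure}~i) has $\nu(\chi_j) = \dim W_{\chi_j}$. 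Deforming to that structure would change the Hodge type and hence leave the family. The paper's argument is different (see the Remark just before Table~7): in each non-rigid family one can specialize $B_2$ to a \emph{product} of lower-dimensional tori on each of which \ref{assump} holds (e.g.\ the family $S_6$ contains $E_\rho \times \overline{E_\rho}$), and such a product is automatically an abelian variety. The Theorem preceding Table~7 is stated only for rank-one cyclotomic lattices, so it does not apply directly to the higher-rank non-rigid $\Lam_2$ as you seem to assume.

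Your final assembly step is also more than is needed, and partly misstated. First, $B_1$ is \emph{not} abelian by hypothesis --- $X$ is only a BdF-\emph{manifold} --- but since $G$ acts on $B_1$ by translation there is no constraint on its complex structure, so one may freely deform $B_1$ to any abelian variety of the correct dimension. Second, once a member of the family has $B_1 \times B_2$ abelian, the quotient is by definition a BdF-variety; there is nothing to check about descent of the polarization or ``compatibility with the freeness conditions on $T_{r,i}$''. Freeness of the $(G \times T)$-action is a topological condition on the lattice data and is unchanged under deformation of the complex structure, so your worry at the end is unfounded.
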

				
				This concludes the proof of Theorem \ref{class-result}.
				
				\begin{rem}
					Quite interesting is also the datum of explicit equations for the given variety, or one isogenous to it. This problem requires a lot of time and effort and is therefore not dealt with in this paper. See for instance \cite{Catanese-Ciliberto} for results in this direction.
				\end{rem}


\begin{thebibliography}{xxxxxxxxxxxxx}
					
					\bibitem[BCF14]{BCF} \textsc{I. Bauer, F. Catanese, D. Frapporti}: Generalized Burniat type surfaces and Bagnera-de Franchis varieties. J. Math. Sci. Univ. Tokyo 22 (2015), 55-111.
					
					\bibitem[BdF08]{BdF} \textsc{G. Bagnera, M. de Franchis}: Le superficie algebriche le quali ammettono una rappresentazione parametrica mediante funzioni iperellittiche di due argomenti. Mem. di Mat. e di Fis. Soc. It. Sc. (3) 15, 253-343 (1908).
					
					\bibitem[BL92]{Cpl-Ab-Var} \textsc{C. Birkenhake, H. Lange}: Complex Abelian Varieties, Second edition. Grundlehren der Mathematischen Wissenschaften [Fundamental Principles of Mathematical Sciences], 302. Springer-Verlag, Berlin (2004).
					
					\bibitem[BGL99]{BGL} \textsc{C. Birkenhake, V. Gonzales, H. Lange}: Automorphisms of $3$-dimensional abelian varieties. Complex geometry of groups (Olmu\'e, 1998), 25-47, Contemp. Math. 240, Amer.Math. Soc., Providence, RI (1999).
					
					\bibitem[Cat15]{Fabrizio} \textsc{F. Catanese}: Topological methods in moduli theory. Bull. Math. Sci., Springer Verlag, (2015).
					
					\bibitem[CaCi93]{Catanese-Ciliberto} \textsc{F. Catanese, C. Ciliberto}: On the irregularity of cyclic coverings  of algebraic surfaces. Geometry of complex projective varieties (Cetraro, 1990), 89-115, Sem. Conf., 9, Mediterranean Press, Rende (1993).
					
					\bibitem[CD17]{Demleitner} \textsc{F. Catanese, A. Demleitner}:  Rigid Group Actions on Complex Tori are Projective (after Ekedahl), Preprint, arXiv:1711.05545v1 (2017).
					
					\bibitem[ES09]{Enr-Sev} \textsc{F. Enriques, F. Severi}: M\'emoire sur les surfaces hyperelliptiques. Acta Math. 32, 283-392 (1909) and 33, 321-403 (1910).
					
					\bibitem[Fu88]{Fujiki} \textsc{A. Fujiki}: Finite automorphism groups of complex tori of dimension two. Publ. Res. Inst. Math. Sci., 24 (1988), 1-97.
					
					\bibitem[GH78]{Griffiths-Harris} \textsc{P. Griffiths, J. Harris}: Principles of Algebraic Geometry. Wiley and Sons, New York (1978).
					
					\bibitem[La01]{Lange} \textsc{H. Lange}: Hyperelliptic varieties. Tohoku Math. J. (2) 53 (2001), no. 4, 491-510.
					
					\bibitem[Mil71]{Milnor} \textsc{J. W. Milnor}: Introduction to algebraic $K$-theory. Annals of Mathematics Studies 72, Princeton University Press (1971).
					
					\bibitem[MTW15]{MTW} \textsc{G. Mongardi, K. Tari, M. Wandel}: Prime order automorphisms of abelian surfaces: a lattice-theoretic point of view arXiv:1506.05679v1 (2015).
					
					\bibitem[UY76]{Uchida-Yoshihara} \textsc{K. Uchida, H. Yoshihara}: Discontinuous groups of affine transformations of $\CC^3$. Tohoku Math. J. (2) 28 (1976), no. 1, 89-94.
					
					\bibitem[Wa82]{Washington} \textsc{L. C. Washington}: Introduction to Cyclotomic Fields. Graduate Texts in Mathematics, 83. Springer-Verlag, New York (1982).
				\end{thebibliography}
			\end{document}